\documentclass[onefignum]{siamonline250211}

\usepackage{lipsum}
\usepackage{comment}
\usepackage{subcaption}
\usepackage{amsfonts}
\usepackage{graphicx}
\usepackage{epstopdf}
\usepackage{algorithmic}
\usepackage{mathrsfs}
\ifpdf
  \DeclareGraphicsExtensions{.eps,.pdf,.png,.jpg}
\else
  \DeclareGraphicsExtensions{.eps}
\fi

\usepackage{enumitem}
\setlist[enumerate]{leftmargin=.5in}
\setlist[itemize]{leftmargin=.5in}

\newsiamremark{remark}{Remark}
\newsiamremark{hypothesis}{Hypothesis}
\crefname{hypothesis}{Hypothesis}{Hypotheses}
\newsiamthm{claim}{Claim}
\newsiamremark{fact}{Fact}
\crefname{fact}{Fact}{Facts}

\headers{Observer design for Chemical Reaction Networks }{Animikh Biswas, Gargi Chaudhuri, and  Muruhan Rathinam}

\title{Data Assimilation for Chemical Reaction Networks and Population Models via a Tunable Observer}

\author{Animikh Biswas\thanks{Department of Mathematics and Statistics, University of Maryland Baltimore County, ID 
  (\email{abiswas@umbc.edu}).}
\and Gargi Chaudhuri\thanks{Department of Mathematics and Statistics, University of Maryland Baltimore County, ID 
  (\email{gargic1@umbc.edu}).}
  \and Muruhan Rathinam\thanks{Department of Mathematics and Statistics, University of Maryland Baltimore County, ID 
  (\email{muruhan@umbc.edu}).}}

\usepackage{amsopn}

\ifpdf
\hypersetup{
  pdftitle={Observer design for Chemical Reaction Networks and Population models},
  pdfauthor={Animikh Biswas, Gargi Chaudhuri, and Muruhan Rathinam}
}
\fi

\newcommand{\comments}[1]{}
\newcommand{\real}{{\mathbb R}}
\newcommand{\sR}{{\mathcal R}}
\begin{document}

\maketitle

\begin{abstract}
We consider the problem of state reconstruction for a nonlinear dynamical 
system from observations of a linear function of the state. We present a
design method for a tunable observer and provide a general theorem which under 
certain conditions guarantees exponential convergence of the observer regardless of initial error. Additional results are provided that apply this theorem to 
chemical reaction network models. Moreover, these results are illustrated 
via examples of mass action form of chemical reaction networks where a subset of the 
species concentrations are observed. Numerical results are provided to show 
the efficacy of our proposed observer. Numerical results are also shown 
for the case of noisy observations and our observer is compared favorably with the
particle filter when the observation noise is small. 
\end{abstract}

\begin{keywords}
data assimilation, observer design, chemical reaction networks, population models 
\end{keywords}

\begin{MSCcodes}
93B53, 37N25, 37N35 
\end{MSCcodes}

\section{Introduction}



Dynamic models in the form of evolution equations (ODEs, PDEs, etc.) derived from scientific principles play a vital role in the prediction and control of the behavior of natural and engineered systems. Even when a dynamic model is reasonably accurate, predicting the state of a system requires the knowledge of the entire state at some prior instant of time (i.e.\ initial conditions) and system parameters. Hence the task of forecasting using a dynamical model that exhibits complex and possibly chaotic dynamics is often hindered by the lack of precise measurements of the state variables. 
An example of this occurs in weather prediction, where one collects data from sparsely located weather stations. The goal in this context is to use these partial state  measurements to obtain an accurate estimate of the full state. In weather forecasting, particularly when the data are corrupted with observational noise, this is referred to broadly as {\em data assimilation} \cite{daley1991atmospheric, Kalnay2003, LSZ}. It is also closely related to observer design and the concept of the {\em Luenberger observer} or simply an {\em observer} in control theory \cite{luenberger1964observing, luenberger1966observers}. While control systems in engineering, atmospheric science, geoscience and meteorology have provided the initial impetus for the subject, it has now found widespread application, including, but not limited to, environmental sciences, systems biology and medicine \cite{kostelich2011accurate, mcdaniel2012data}, imaging science, traffic control and urban planning, economics and finance and oil exploration \cite{asch2016data}. 

Classically, data assimilation techniques are based on linear quadratic estimation, also known as the Kalman Filter, due to its phenomenal success. The Kalman Filter however, has the drawback of assuming that the underlying system and any corresponding observation models are linear and that 
the noise is Gaussian. The classical Kalman filter has been extended by practitioners to nonlinear models giving rise to various techniques such as the Ensemble Kalman Filter (EnKF), Extended Kalman Filter (EKF), the Unscented Kalman Filter and others; see \cite{ABN, LSZ} and references therein for a detailed account. However, unlike the Kalman filter,  these do not enjoy the optimality property and have other potential drawbacks such as lack of stability and accuracy and may exhibit {\em catastrophic filter divergence}, particularly for chaotic dynamical systems \cite{HM}. 
When the dynamics are nonlinear and the system and observation noise are not Gaussian, 
particle filters \cite{doucet2001sequential, bain2008fundamentals} provide an approach to state estimation that is better grounded in theory. However, despite theoretical 
guarantees of convergence in the large particles limit, the computational burden of particle filters tends to be prohibitive in several high dimensional problems.  

The Kalman filter and the particle filter described above take a probabilistic interpretation and apply to situations where the dynamics and/or the observations are noisy. 
An alternative approach to above mentioned filters is an {\em observer}. Especially, when the dynamics are deterministic and the observations are noiseless, the observers provide an easily implementable method to obtain a point estimate.  
The very first idea of an observer is due to Luenberger who developed them for state estimation of a linear autonomous system of ODEs from linear observations of the state \cite{luenberger1964observing, luenberger1966observers}. An observer is 
a dynamical system whose state is a proxy for the true state of the original dynamics. Observers are typically obtained by augmenting a copy of the original dynamical system with a term that provides a corrective feedback based on the observations. For linear autonomous systems of ODEs, the problem of designing an observer whose state converges to the system state exponentially fast is completely solved. Moreover, for such systems, the Kalman filter itself can be regarded as an optimal observer without using a stochastic interpretation \cite{sontag2013mathematical}. For nonlinear systems, the design of observers is a challenging problem and has been the subject of research among control theorists for decades with no single best approach. 
See \cite{bernard2022observer} for a recent survey. Generalizing the notion of observability to nonlinear systems necessitated a 
differential geometric approach \cite{hermann1977nonlinear}. 
For differential geometric approaches to observer design, see \cite{krener1985nonlinear, krener2002nonlinear} for instance. Moreover, several methods in the control literature only provide local error convergence. That is, convergence is guaranteed only if the initial observer state is sufficiently close to the true state. 

In disciplines outside of control engineering, such as weather forecasting, 
observers have been in use \cite{Anthes1974, HA, N}, sometimes without using that terminology, simply referring to the approach broadly as {\em data assimilation} or sometimes as {\em nudging} to refer to the augmented feedback term.   
A rigorous analytical framework for this approach for dissipative nonlinear PDEs, was first developed in \cite{AOT, AT}. In particular, it is shown there that the 
observer converges exponentially to the solution of the original system regardless of the initial data used to initialize it. This initiated an active field of research; see \cite{ADR, BOT, FJJ, FMT, LR, LPV} and the references therein.

In this work, we consider chemical reaction networks and population models, where one needs to estimate the concentration of the non-measured species using the data from the measured concentrations of a subset of species. 
For the case of stochastically modeled chemical reaction networks, particle filtering methods have been developed recently. See \cite{rathinam2021state, rathinam2024stochastic, fang2022stochastic, fang2023convergence, fang2024effective, d2026filtered} for instance. For the case of deterministic models of reaction networks observers were provided in \cite{farina2009observer, chaves2001observers}. The results provided in \cite{chaves2001observers, farina2009observer} are local in nature. That is, it is shown that if the observer starts sufficiently close to the system, then convergence is assured. Our results are applicable to arbitrary initial errors. If the initial error is large, the tunable parameter $\mu$ associated with the observer needs to be chosen sufficiently large. 

The rest of this paper is organized as follows. In Section \ref{sec-observer}, we review the notion of an observer in the context of a nonlinear dynamical system (ODEs) where a linear function of the states is observed.  We propose a specific form for our tunable observer (with a tunable parameter $\mu>0$) to be designed and we state our {\em design goal}. This goal 
basically requires that regardless of initial error of the observer, exponential error convergence is achieved for all sufficiently large values of $\mu$. We provide a new general result in Theorem \ref{genthm} which provides sufficient conditions that guarantee that the design goal shall be met. Theorem \ref{genthm} applies to any dynamical system and linear observations, and is not confined to chemical reaction models. Section \ref{sec-CRN} provides an overview of chemical reaction network models which are applicable to all kinds of population models as well. In Section \ref{sec-obs-CRN}, we consider the application of Theorem \ref{genthm} to chemical reaction systems. Two propositions, Proposition \ref{prop1} and Proposition \ref{prop2}, provide sufficient conditions 
under which our observer design goal can be satisfied for a chemical reaction network.  While these propositions do not assume mass action kinetics, they are most useful for the case of mass action kinetics where a subset of species concentrations are observed. We provide four examples to illustrate the application of these two propositions.  
In Section \ref{sec_num_res}, we show via numerical simulations that for the examples considered in Section \ref{sec-obs-CRN}, our proposed observer converges exponentially. We also provide comparisons of our observer with what we call the {\em usual nudging} method 
for which no theoretical convergence guarantees are available. We also include numerical results for the case where the observations are corrupted by Brownian motion. 
In this case, we compare our proposed observer with a particle filter. When the observation noise is small, our observer seems to perform better than the particle filter which suffers from higher computational burden. In Section \ref{sec-conclusion}, we provide some concluding remarks.

\section{Data assimilation via a tunable observer}\label{sec-observer}
In this section we review the concept of an {\em observer} or a {\em nudged system} 
for the asymptotic reconstruction of the state of a 
partially observed dynamical system. We also present 
a main new result, Theorem \ref{genthm}, which will be used throughout the rest of this paper. 

Consider a dynamical system
\begin{equation}\label{eq_sys}
 \dot{x}(t)= f(x(t))
\end{equation}
where $f:\real^n \to \real^n$ is $C^1$. Suppose 
we observe $y(t) \in \real^m$ continuously in time $t \geq 0$ which is a linear function of the state $x(t)$ given by \begin{equation}
y(t) = C \, x(t)
\end{equation}
where $C\in \real^{m\times n}$. Our goal is to estimate $x(t)$ from the observed signal $y(t)$. In order to facilitate this, one designs another dynamical system 
with state $z(t) \in \real^n$ which is a proxy for $x(t)$ and evolves according to an equation of the form
\begin{equation}\label{eq-obs}
\dot{z}(t) = f(z(t)) + G \, (y(t)-C \,z(t))    
\end{equation}
where $G \in \real^{n \times m}$. The basic idea behind this is that $z(t)$ experiences the same vector field $f$ as the original system, but this vector field is augmented by the {\em nudging term} $G \, (y(t)- C\, z(t))$. We note that, the term $C \, z(t)$ corresponds to the observed signal if the actual state was $z(t)$ instead of $x(t)$. Thus $y(t)-C\, z(t)$ is an indication of the error between $z(t)$ and $x(t)$ that 
we can measure without the knowledge of $x(t)$. The term $G$ (known as the {\em gain} in control literature) basically maps the error $y(t)- C \, x(t)$ into an augmentation to the vector field. When $G$ depends on a scalar parameter $\mu$ which is to be 
determined, the observer is called a {\em tunable observer}. In general, the goal is to choose $G$ such that the error $e(t)=z(t)-x(t)$ converges to zero 
as $t \to \infty$, ideally, exponentially fast. 
We note that, exponential error convergence means that exists $\lambda>0$ and $M>0$ such that 
\[
|e(t)| \leq M \, e^{-\lambda t} \quad \forall t \geq 0,
\]
where $|x|$ is the standard Euclidean norm of $x \in \real^n$. 

Typically $m < n$ and without loss of generality we suppose that $C$ is full rank (surjective). It also makes sense to 
choose $G$ to be full rank, i.e.\ injective, for it to be most effective. When the vector field $f$ is linear (of the form $f(x)=Ax$), there exists comprehensive theory 
that helps choose $G$ under certain conditions to achieve exponential convergence of the error, see \cite{sontag2013mathematical} for instance. When $f$ is nonlinear, one may linearize $f$ around an equilibrium to 
apply this theory. However, this will only help when 
the initial states $x(0)$ and $z(0)$ are sufficiently close. 

In this paper, we aim to provide a tunable observer design which under certain conditions will achieve exponential convergence of error regardless of the size of the initial error $z(0)-x(0)$. Our approach will be to ensure that the error 
$z(t)-x(t)$ is contractive in a norm that arises from a 
suitable inner product $(\cdot,\cdot)_N$. 

Two subspaces of $\real^n$ will play an important role in the behavior of the observer. First is the null space 
of the observation which we denote by $K$ so that  
$K = \ker C$. We note that $K$ depends only on the observation model. We denote the range of $G$ by $E$ 
and note that $E$ depends on the choice of $G$. 
If we denote the observer error by $e(t)$, so that $e(t)=z(t)-x(t)$, then $e(t)$ and $x(t)$ evolve according to the combined dynamics
\begin{equation}
\begin{aligned}
\dot{x}(t) &= f(x(t)),\\
\dot{e}(t) &= [f(x(t)+e(t)) - f(x(t))] - H \, e(t), 
\end{aligned}    
\end{equation}
where $H=GC$. Note that $H:\real^n \to \real^n$ and 
that $\ker H = K$ and the range of $H$ is $E$. We also note that $\dim K = n-m$ and $\dim E=m$ where $C \in \real^{m \times n}$. 
Let $(\cdot,\cdot)_N$ be an inner product on $\real^n$. 
Then
\[
\frac{1}{2} \frac{d}{dt} (e(t), e(t))_N = (e(t), f(x(t)+e(t))-f(x(t)))_N - (e(t), H e(t))_N. 
\]
In order to ensure that $(e(t),e(t))_N$ is decreasing in time $t$, our goal is to ensure that the quadratic form 
$(e,H e)_N$ for $e \in \real^n$ is positive semidefinite 
and is of maximum rank as possible. We note that $(e,He)_N=0$ 
for $e \in K$ (as $He=0$) and also for $e \in E^{\perp_N}$ 
since $H e \in E$. Here, $E^{\perp_N}$ is the orthogonal space of $E$ with respect to the inner product $(\cdot,\cdot)_N$. If $H$ is positive semidefinite, it follows that $(e, He)_N=0$ for all $e \in K + E^{\perp_N}$ (see Lemma \ref{lem-H-rank}). Thus, to maximize the rank of $(e,He)_N$, one must have that $E^{\perp_N}=K$. This
implies that $E \cap K=\{0\}$ and $\real^n = K \oplus E$. 

\begin{lemma}\label{lem-H-rank}
Suppose $(\cdot,\cdot)_N$ is an inner product on $\real^n$ and $H:\real^n \to \real^n$ satisfies 
\[
(e,He)_N \geq 0 \quad \forall e \in \real^n.
\]
Suppose further that $K=\ker H$ and 
$E$ is the range of $H$. Then for all $e \in K + E^{\perp_N}$ it holds that $(e, He)_N=0$.     
\end{lemma}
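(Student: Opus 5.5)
Write $e = k + w$ with $k\in K$ and $w\in E^{\perp_N}$, and expand $(e,He)_N$. Since $Hk=0$, we get $He = Hw$, so
\[
(e,He)_N = (k,Hw)_N + (w,Hw)_N .
\]
The second term vanishes because $Hw\in E$ and $w\in E^{\perp_N}$. The whole statement therefore reduces to showing $(k,Hw)_N=0$ for all $k\in K$ and $w\in E^{\perp_N}$, i.e. that $K$ is $N$-orthogonal to $E$.

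The key step is to use positive semidefiniteness through the symmetric part $S=\tfrac12(H+H^*)$, where $H^*$ is the $N$-adjoint of $H$. Then $(e,Se)_N=(e,He)_N\ge 0$, so $S$ is $N$-self-adjoint and positive semidefinite. For such an operator, a vector $k$ with $(k,Sk)_N=0$ satisfies $Sk=0$. To see this, apply Cauchy--Schwarz to the semi-inner product $(u,Sv)_N$:
\[
|(v,Sk)_N|^2 \le (v,Sv)_N\,(k,Sk)_N .
\]
For $k\in K$ we have $(k,Sk)_N=(k,Hk)_N=0$. Hence $Sk=0$, that is $H^*k=-Hk=0$, so $k\in\ker H^*=(\operatorname{range} H)^{\perp_N}=E^{\perp_N}$. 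This gives $K\subseteq E^{\perp_N}$.

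With this inclusion, $K+E^{\perp_N}=E^{\perp_N}$. For $e\in E^{\perp_N}$ we have $He\in E$, so $(e,He)_N=0$ directly, which finishes the proof. Equivalently, in the expansion above $(k,Hw)_N=0$ because $Hw\in E$ and $k\in E^{\perp_N}$.

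The only nontrivial step is the inclusion $K\subseteq E^{\perp_N}$, and I expect it to be the main obstacle. A naive argument without it fails: the cross term $(k,Hw)_N$ has no reason to vanish unless positivity is used. The tool is the semidefinite Cauchy--Schwarz inequality applied to the symmetric part of $H$. I would first check the identity $\ker H^* = E^{\perp_N}$, which is standard: $(u,Hv)_N=(H^*u,v)_N$ for all $v$.
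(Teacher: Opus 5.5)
Your proof is correct, but it handles the key step by a heavier route than the paper. You isolate exactly the right cross term $(k,Hw)_N$. The paper, however, removes it without adjoints or Cauchy--Schwarz. For $u\in K$ and $v\in E^{\perp_N}$ it observes $0\le (u+v,H(u+v))_N=(u,Hv)_N$, using $Hu=0$ and $(v,Hv)_N=0$. Replacing $u$ by $-u$ then gives $(u,Hv)_N=0$. So the ``main obstacle'' you anticipated is a one-line sign flip, applying positivity directly on $K+E^{\perp_N}$.

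What your route buys is a stronger structural conclusion. You get $(k,Hw)_N=0$ for all $k\in K$ and all $w\in\real^n$, i.e.\ $\ker H\subseteq\ker H^*=E^{\perp_N}$. Since $\dim K=n-\dim E=\dim E^{\perp_N}$, this inclusion is in fact the equality $K=E^{\perp_N}$. Thus positive semidefiniteness alone already forces the $N$-orthogonal splitting $\real^n=K\oplus E$, and the set $K+E^{\perp_N}$ in the lemma is just $K$.

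The paper's elementary trick yields this stronger fact as well. Take $v\in\real^n$ arbitrary and replace $u$ by $tu$ with $t\in\real$. Then $t(u,Hv)_N+(v,Hv)_N\ge 0$ for all $t$, which forces $(u,Hv)_N=0$.
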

\begin{proof}
Let $u \in K$ and $v \in E^{\perp_N}$ be arbitrary. Then 
\[
0 \leq (u + v, H (u + v) )_N  = (u, H v)_N.
\]
So $(u,Hv)_N \geq 0$ for all $u \in K$ and all $v \in E^{\perp_N}$. By replacing $u$ with $-u$, we obtain that 
$(u, Hv)_N \leq 0$ and hence $(u, Hv)_N=0$ for all $u \in K$ and $v \in E^{\perp_N}$. From this, the result follows. 
\end{proof}

Thus going forward, we shall pick $E$ to be complementary to $K$. We make the further choice that $H$ is of the form  $H= \mu \Pi_E$ where $\Pi_E$ is the projection from $\real^n$ onto $E$ along $K$ and $\mu$ is a positive parameter to be tuned. We note that it is possible  
to ensure $GC = \mu \Pi_E$ as follows. Given 
$y \in \real^m$, since $C$ is surjective and $E$ is complementary to the null space $K$ of $C$, there exists unique $x \in E$ such that $y = Cx$. One assigns $Gy=\mu x$. 
From this, it follows that $GC = \mu \Pi_E$. 
With this choice, our observer equation takes the form 
\begin{equation}
z(t) = f(z(t)) - \mu \, \Pi_E (z(t)-x(t)).    
\end{equation}
The combined system and error equations take the form 
\begin{equation}\label{eq-sys-error}
\begin{aligned}
\dot{x} &= f(x),\\
\dot{e} &= f(x+e)-f(x) - \mu \Pi \, e,
\end{aligned}    
\end{equation}
where we have suppressed time dependence for brevity.  

When $E=K^\perp$, that is $E$ is the orthogonal 
space of $K$ in the standard inner product, we refer 
to the resulting tunable observer as the {\em usual} or {\em standard nudging} method. As an example, it is common in the literature to only nudge those variables that 
are observed. For instance, if we observe the first $m$ components $x_1, \dots, x_m$, then the usual nudging observer takes the form 
\[
\begin{aligned}
\dot{z}_i &= f_i(z) - \mu (z_i - x_i) \quad i=1,\dots,m,\\
\dot{z}_i &= f_i(z) \quad i=m+1,\dots,n.
\end{aligned}
\]
While this usual nudging is tied to the standard inner product, our observer is more general as it is not tied 
to the standard inner product. We also note that once we choose a subspace $E$ complementary to $K$, the form of the tunable observer is fixed. 
With this in mind, we state our design goal as follows. 

\paragraph{Design goal}\label{design-goal}
Choose the $m$ dimensional subspace $E$ 
complementary to $K$ such that the following property satisfied: {\em for every compact set $\Gamma \subset \real^n$ there exists $\mu_0>0$ such that for all $\mu > \mu_0$,
if the system trajectory $x(t) \in \Gamma$ for all $t \geq 0$ then the error of the observer initialized in $\Gamma$
converges to zero exponentially fast.} Roughly speaking, this property asserts that regardless of the size of the initial error one may tune the observer to achieve exponential error convergence. 

Before we state a theorem that provides sufficient conditions under which our design goal will be met, 
we introduce some notation. 
Let $(\cdot,\cdot)_N$ be an inner product on $\real^n$. We denote a ball in $\real^n$ centered 
at the origin and of radius $R>0$ in this inner product by $B_{R,N}$. We drop the subscript $N$ if the standard inner product is used. 

\begin{theorem}\label{genthm}
Suppose the solution $x(t)$ lies in a compact set $\Gamma \subset \mathbb{R}^n$ for $t \geq 0$. 
Suppose $f$ is locally Lipschitz and that there exists an 
inner product $(\cdot, \cdot)_N$ on $\mathbb{R}^n$ such that $E \perp_N K$ and there exists $\alpha_0>0$ such that 
\begin{equation}
		 (e, f(x+e) - f(x))_N \leq -\alpha_0 |e|_N^2 
\end{equation}
for all $x \in \Gamma$ and $e \in K \cap B_{2R,N}$, where $R>0$ is such that 
 $\Gamma \subset B_{R,N}$. 
Then, there exists $\mu_0>0$
such that for all $\mu > \mu_0$ the error $e(t)$ of the observer initialized in $\Gamma$ tends to zero exponentially. Moreover, in the exponential rate, any exponent $\lambda \in (0,\alpha_0)$ may be achieved for all sufficiently large $\mu$. 
\end{theorem}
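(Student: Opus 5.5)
The plan is an energy estimate in the norm $|\cdot|_N$, using the decomposition $\real^n = K \oplus E$, which is $N$-orthogonal by hypothesis. Write $\Pi = \Pi_E$ and split the error as $e = u + v$ with $u = (I-\Pi)e \in K$ and $v = \Pi e \in E$. Orthogonality gives $|e|_N^2 = |u|_N^2 + |v|_N^2$, so $|u|_N \le |e|_N$. It also gives $(e, \mu \Pi e)_N = \mu |v|_N^2$. From \eqref{eq-sys-error} we then get
\[
\tfrac12 \tfrac{d}{dt}|e|_N^2 = (e, f(x+e)-f(x))_N - \mu |v|_N^2 .
\]

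The key is to split the nonlinear term so that the hypothesis, which only concerns $K$-directions, can be used:
\[
(e, f(x+e)-f(x))_N = (u, f(x+u)-f(x))_N + (u, f(x+e)-f(x+u))_N + (v, f(x+e)-f(x))_N .
\]
Let $L$ be a Lipschitz constant of $f$ on the closed ball $B_{3R,N}$ with respect to $|\cdot|_N$. It is finite since $f$ is locally Lipschitz and the ball is compact; norm equivalence on $\real^n$ handles the change of norm. Suppose $x \in \Gamma \subset B_{R,N}$ and $|e|_N \le 2R$. Then $x+e$ and $x+u$ lie in $B_{3R,N}$, and $u \in K \cap B_{2R,N}$. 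So the first term is at most $-\alpha_0|u|_N^2$ by hypothesis, the second at most $L|u|_N|v|_N$, and the third at most $L(|u|_N+|v|_N)|v|_N$. Altogether,
\[
\tfrac12 \tfrac{d}{dt}|e|_N^2 \le -\alpha_0|u|_N^2 + 2L|u|_N|v|_N + (L-\mu)|v|_N^2 .
\]
Fix $\lambda \in (0,\alpha_0)$ and apply Young's inequality $2L|u||v| \le (\alpha_0-\lambda)|u|^2 + \tfrac{L^2}{\alpha_0-\lambda}|v|^2$. If
\[
\mu > \mu_0 := \lambda + L + \frac{L^2}{\alpha_0-\lambda},
\]
the right-hand side is at most $-\lambda(|u|_N^2+|v|_N^2) = -\lambda|e|_N^2$.

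The remaining step, which I expect to be the main technical point, is to justify the a priori bound $|e(t)|_N \le 2R$ for all $t \ge 0$. The hypothesis is only local in $e$, and $z(t)$ must also exist globally. Since $z(0), x(0) \in \Gamma \subset B_{R,N}$, we have $|e(0)|_N < 2R$ (or $\le 2R$; a strict margin can be obtained by replacing $2R$ in the argument with any slightly smaller bound if needed, or by defining $L$ on a slightly larger ball). Let $T^*$ be the supremum of times $T$ such that the solution $e$ exists on $[0,T]$ with $|e|_N \le 2R$ there. On $[0,T^*)$ the differential inequality holds, so $|e|_N^2$ is nonincreasing and the bound $|e|_N \le |e(0)|_N \le 2R$ persists. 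Hence $e$ stays in a compact set, the local solution of the locally Lipschitz ODE can be continued, and a standard continuation argument gives $T^* = \infty$.

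Finally, Gronwall's inequality applied to $\tfrac{d}{dt}|e|_N^2 \le -2\lambda|e|_N^2$ yields $|e(t)|_N \le e^{-\lambda t}|e(0)|_N \le 2R\,e^{-\lambda t}$. By equivalence of norms, this is exponential decay in the Euclidean norm with rate $\lambda$. Since $\lambda \in (0,\alpha_0)$ was arbitrary, and the only price of taking $\lambda$ close to $\alpha_0$ is a larger $\mu_0$, this proves both assertions.
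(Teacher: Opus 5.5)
Your proof is correct and follows the paper's argument essentially step for step: the same $N$-orthogonal splitting of $e$ into its $K$ and $E$ components, the Lipschitz constant on $B_{3R,N}$, the bound $-\alpha_0|u|_N^2 + 2L|u|_N|v|_N + (L-\mu)|v|_N^2$, and a first-exit-time/continuation argument from $B_{2R,N}$ followed by Gronwall. The only difference is that you absorb the cross term by Young's inequality with a prescribed rate $\lambda$, which gives the explicit threshold $\mu_0 = \lambda + L + L^2/(\alpha_0-\lambda)$; the paper instead bounds the quadratic form by the largest eigenvalue of the $2\times 2$ matrix $Q$ and shows it tends to $-\alpha_0$ as $\mu\to\infty$, and both routes give the same conclusions.
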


\begin{proof}
Assume that the observer is initialized in $\Gamma \subset B_{R,N}$. Then $e(0)=z(0)-x(0) \in B_{2R,N}$. 
Let $T=\inf\{t>0 \, | \, e(t) \notin B_{2R,N}\}$, with the infimum of the empty set defined by $\infty$. Clearly $T>0$ as $e(0) \in B_{2R,N}$. Suppose $T<\infty$. 
Then, by continuity $|e(T)|_N=2 R$. For $t \in [0,T)$ we consider the time evolution of 
$|e(t)|_N$. Let $L$ be the Lipschitz constant for $f$ on $B_{3R,N}$ in the 
norm $|\cdot|_N$. Suppressing time dependence, and writing $x(t)=x$ and $e(t)=e=e_E + e_K$ where $e_E \in E$ and $e_K \in K$ 
are the projections, we may write for $0 \leq t <T$ 
		\begin{align*}
			\frac{1}{2} \frac{d}{dt} |e|_N^2 &= \left(e, \frac{de}{dt}\right)_N\\
			&= (e, f(x+e) - f(x) - \mu e_E)_N \\
			&= (e, f(x+e_E + e_K) - f(x+e_K))_N + (e, f(x+e_K) - f(x))_N - \mu |e_E|_N^2 \\
			&\leq (e_E, f(x+e_K + e_E) - f(x+e_K))_N + (e_E, f(x+e_K)-f(x))_N\\
			&\quad + (e_K, f(x+e_K+e_E) - f(x+e_K))_N + (e_K, f(x+e_K) - f(x))_N - \mu |e_E|_N^2 \\
			&\leq 2L |e_E|_N |e_K|_N + L |e_E|_N^2 - \alpha_0 |e_K|_N^2 - \mu |e_E|_N^2,
\end{align*} 
where we note that, since $e(t) \in B_{2R,N}$, we also have that $e_E(t), e_K(t) \in B_{2R,N}$. Hence we have that $x(t) + e(t), x(t) + e_E(t), x(t) + e_K(t) \in B_{3R,N}$ for $t \in [0,T)$.  
Thus we may write
\[
			\frac{1}{2} \frac{d}{dt} |e|_N^2 
\leq \Bar{e}^T \, Q \, \Bar{e},
\]
where $\Bar{e}=(|e_E|_N,|e_K|_N)$ (column vector) and 
\[
Q = \left[ \begin{array}{rr} -\mu + L & L \\L & -\alpha_0\\
\end{array} \right].
\]
The symmetric matrix $Q$ is negative definite 
provided $\mu>\mu_0$ where $\mu_0 = L + \frac{L^2}{\alpha_0}$. Let $-\lambda$ be the largest eigenvalue 
of $Q$, so that $\lambda>0$ when $\mu>\mu_0$. 
Hence, we get
		\[ \frac{1}{2} \frac{d}{dt} |e|_N^2 \leq -\lambda |e|_N^2 \quad 0 \leq t < T.
        \]
By Gronwall's inequality, for $t \in [0,T)$
		\[ |e(t)|_N \leq e^{-\lambda t} |e(0)|_N. \]
Thus by continuity $|e(T)|_N < |e(0)|_N < 2R$, reaching a contradiction. 	
Thus $T=\infty$, and hence $|e(t)|_N \leq |e(0)|_N e^{-\lambda t}$ for all $t \geq 0$, 
showing exponential convergence of the error. 
We note that, due to norm equivalence, in the standard Euclidean norm, the error satisfies a bound of the form
\[
|e(t)| \leq M \, e^{-\lambda t}, \forall t \geq 0,
\]
where $M>0$ is independent of $t$. 

Under the assumption $\mu > \mu_0$, we may write $\lambda=2 \alpha_0 \, g(\mu)$ where 
\[
g(\mu) = \frac{\mu-\mu_0}{(\mu - \mu_1) + \sqrt{(\mu - \mu_2)^2 + 4 L^2}} = \frac{1}{(\frac{\mu-\mu_1}{\mu-\mu_0}) + \sqrt{(\frac{\mu-\mu_2}{\mu-\mu_0})^2 + (\frac{2 L}{\mu-\mu_0}})^2},
\]
with $\mu_1= L-\alpha_0$ and $\mu_2 = L + \alpha_0$. 
We observe that as $\mu \to \infty$, $\lambda \to \alpha_0$. Also as $\mu \to \mu_0$ from above, $\lambda \to 0$. Thus, any exponent in the range $(0,\alpha_0)$ 
may be achieved by sufficiently large $\mu$. 
We also note that if $L \geq \alpha_0$ then $\mu_0 \geq \mu_2>\mu_1>0$, and hence $F(\mu)$ is monotonically increasing for $\mu \in (\mu_0,\infty)$. 
\end{proof}

\begin{remark}\label{rem-bigthm}
We note that in Theorem \ref{genthm} the space $E$, the inner product $(\cdot,\cdot)_N$ and $\alpha_0>0$ are allowed to depend on the compact set $\Gamma$. However, our design goal requires that the space $E$ be independent of $\Gamma$. This is assured if $E$ may be chosen independent of $\Gamma$. Our subsequent results provide sufficient conditions under which this holds. 
\end{remark}

\section{Chemical reaction networks (CRNs)}\label{sec-CRN}
A Chemical Reaction Network (CRN) model consists of $n$ species interacting via 
$r$ reactions. If we denote the species by $S_1, \dots, S_n$, then a reaction 
consists of a nonnegative integer linear combination of the species 
being converted to a nonnegative integer linear combination of the species, and may be written as follows: 
\[
\sum_{i=1}^n \nu^-_i S_i \longrightarrow \sum_{i=1}^n \nu^+_i S_i
\]
where $\nu^+=(\nu^+_1,\dots,\nu^+_n), \nu^-=(\nu^-_1,\dots,\nu^-_n) \in \mathbb{Z}_+^n$. If $\nu^-=0$, we denote the reaction by 
\[
\emptyset \longrightarrow \sum_{i=1}^n \nu^+_i S_i.
\]
Likewise for $\nu^+=0$. 

Thus, for each reaction $j$, we have the quantities $\nu_j^+, \nu_j^- \in \mathbb{Z}_+^n$ and $\nu_j \in \mathbb{Z}^n$ where $\nu_j$ is called the {\em stoichiometric vector} of reaction $j$. If the $i$th component of $\nu_j^-$ is nonzero, then 
specie $i$ is a {\em reactant} in the reaction $j$. We shall refer to the $i$th component (when nonzero) of $\nu_j^-$ as the {\em stoichiometric coefficient} 
of the reactant specie $i$ in reaction $j$. For example, in the case of the Lotka-Volterra model \cite{volterra1928variations} considered later in Section \ref{sec-prop1-eg} there are two species $X_1$ (the prey) and $X_2$ (the predator) which undergo three reactions which may be written as 
		\begin{align*}
			X_1 &\xrightarrow{k_1} 2X_1, \\
			X_1 + X_2 &\xrightarrow{k_2} 2X_2, \\
			X_2 &\xrightarrow{k_3} \emptyset.
		\end{align*}
The first reaction represents prey reproduction and has one reactant specie $X_1$. 
The second reaction represents predator-prey interaction and the third reaction represents death of predator. As an example, in reaction 2, both species are reactants and each of them have a stoichiometric coefficient $1$.  

The dynamic model of a CRN consists of the state space $\real_+^n$ 
which stands for the set of possible species concentration vectors. 
(We shall use $\real_+$ to denote the set of nonnegative real numbers). 
We denote by 
$x_i(t)$ the concentration of the $i$th species and by $x(t)$ the vector of species concentrations. 
Associated to each reaction $j$ is a {\em reaction rate} function $a_j(x)$ which 
maps the species concentration vector $x$ to a nonnegative scalar which is regarded as 
the {\em reaction flux}. Then the dynamic model of the CRN is given by
\begin{equation}\label{eq-RRE}
\dot{x}(t) = \sum_{j=1}^r \nu_j \, a_j(x(t)).     
\end{equation}
The reaction rate functions $a_j$ are modeled in various ways, but the most 
common model is the so-called {\em mass action} form. In the mass action form, 
$a_j(x)$ is given by
\begin{equation}
a_j(x) = k_j \, x^{\nu_j^-},    
\end{equation}
where $k_j>0$ is a parameter and we have used the following notation for a monomial. 
Given $x \in \real^n$ and a multi-index $\gamma \in \mathbb{Z}_+^n$ 
\[
x^\gamma = x_1^{\gamma_1} \dots x_n^{\gamma_n}.
\]
For a multi-index $\gamma \in \mathbb{Z}_+^n$ we define $|\gamma|=\gamma_1+\dots+\gamma_n$. 

Thus, for the Lotka-Volterra example, the mass action reaction rate functions are given by
       \begin{align*}
			a_1(x) &= k_1 x_1, & 
			a_2(x) &= k_2 x_1 x_2, & 
			a_3(x) &= k_3 x_2.
		\end{align*}
The system equations may be written as
        \[
		\dot{x} = k_1 x_1 \begin{pmatrix} 1 \\ 0 \end{pmatrix} + k_2 x_1 x_2 \begin{pmatrix} -1 \\ 1 \end{pmatrix} + k_3 x_2 \begin{pmatrix} 0 \\ -1 \end{pmatrix} 
		\]
The {\em stoichiometric subspace} $S \subset \mathbb{R}^n$ of a reaction network is defined by
\begin{equation}
S = \text{span}\{ \nu_j \, | \, j=1,\dots,r\}.    
\end{equation}
It is clear that $S$ is invariant under the dynamics. 
The dynamic model \eqref{eq-RRE} is physically meaningful only if the  non-negative orthant $\real_+^n$ is forward invariant. This depends on the nature of the reaction rate functions $a_j$. In the case of mass action kinetics,  
the non-negative orthant $\real_+^n$ is indeed forward invariant; see for instance,  Proposition 2.4.1 in \cite{johnston2011thesis} which shows that the (strictly) positive orthant $(0,\infty)^n$ is forward invariant for mass-action systems and this proof also shows that $\real_+^n$ is forward invariant as well. 

\section{Observer design for CRNs}
\label{sec-obs-CRN}


Theorem \ref{genthm} together with the fact that the state space of a CRN is a subset of $\real_+^n$ suggests that it is instructive to 
study the map $F : \real_+^n \times \real^n \rightarrow \mathbb{R}^n$ defined by
\begin{equation}
F(x, e) = f(x+e) - f(x),
\end{equation}
and $F_K:\real_+^n \times K \to \real^n$, the restriction of $F$ to $\real_+^n \times K$. In particular, we wish to guarantee that the following contractivity condition
is satisfied. 

\paragraph{Contractivity Condition (CC)}
There exists an inner product $(\cdot,\cdot)_N$ on $\real^n$ such that $E \perp_N K$ and moreover, 
for every compact set $\Gamma \subset \real_+^n$ there exists $\alpha_0>0$ such that 
\begin{equation}\label{eq-contract}
(e, F_K(x,e))_N \leq -\alpha_0 (e, e)_N, \quad \forall (x,e) \in \Gamma \times K.    
\end{equation}

We shall present two propositions, Proposition \ref{prop1} and Proposition \ref{prop2}, that apply to CRNs and 
provide sufficient conditions under which CC holds. 
These propositions do not assume mass action form of rate functions. However, these propositions are most useful when the CRNs have mass action form of rate functions and 
a subset of the species concentrations are observed. 

Proposition \ref{prop1} involves writing $F_K(x,e) = A_K \, e + N_K(x,e)$ and considering the case where the range of $N_K(x,e)$ lies in a subspace complementary to $K$. In this case, one  chooses $E$ to contain the range of $N_K(x,e)$. This effectively eliminates having to consider the nonlinear terms and simply focus on the linear part $A_K \, e$ of $F_K(x,e)$. 

Proposition \ref{prop2} involves writing $F_K= B_K(x) \, e + \Tilde{N}_K(x,e)$ and considering the case where the range of $\Tilde{N}_K(x,e)$ is complementary to $K$. 
Thus the second result is in fact a generalization of the first result. We present the first result before discussing the second as it will be easier to follow the development of our ideas. 

In the case of CRNs, the vector field is of the form
 $f(x) = \sum_{j=1}^r \nu_j a_j(x)$.
Then 
\[
F(x, e) = \sum_{j=1}^r \nu_j (a_j(x+e) - a_j(x)).
\]
We also assume that $a_j$ are $C^1$. We define $\psi_j:\real_+^n \times \real^n \to \real$
by
\begin{equation}
\psi_j(x,e) = a_j(x+e)-a_j(x).    
\end{equation}
Thus, the functions $\psi_j$ and their restrictions to 
$\real_+^n \times K$ are important to study.  
\subsection{Decomposition $F_K(x,e) = A_K \, e + N_K(x,e)$}\label{sec-prop1}
We consider the Taylor expansion of $\psi_j(x,e)$ around $(0,0)$ to define the linear and nonlinear parts of $\psi_j$ and hence those of $F(x,e)$ and $F_K(x,e)$.  
Focusing on $(x,e) \in \real_+^n \times K$, we define the sets of reactions $\sR_{0,K}$ and $\sR_{1,K}$ by 
\begin{equation}
\begin{aligned}
\sR_{0,K} &= \{ j \, | \psi_j(x,e) = 0 \quad \forall (x,e) \in \real_+^n \times K \},\\
\sR_{1,K} &= \{ j \, | j \notin \sR_{0,K} \text{ and } \psi_j(x,e) =  \frac{d a_j}{dx}(0)\,  e \quad \forall (x,e) \in \real_+^n \times K \}.  
\end{aligned}
\end{equation}
Thus $\sR_{0,K}$ is the set of reactions $j$ for which $\psi_j(x,e)$ is zero on $\real_+^n \times K$ and $\sR_{1,K}$ is the set of reactions $j$ for which the restriction of $\psi_j(x,e)$ to $\real^n \times K$ is linear and nonzero. (We note that, from the definition of $\psi_j(x,e)$, the linear term in its Taylor expansion around $(0,0)$ is independent of $x$.) 

We define the stoichiometric subspace $S_{1,K}$ as follows:
\begin{equation}
S_{1,K} = \text{span}\{ \nu_j \, | \, j \notin (\sR_{1,K} \cup \sR_{0,K})\}.
\end{equation}
Thus $S_{1,K}$ contains the range of the nonlinearity in $F_K$.  
Denote the linear part of $F$ by $A e$, so that 
$A=\sum_{j=1}^r \nu_j \, \frac{da_j}{dx}(0)$. Then we may write 
\begin{equation}
F_K(x,e) = A_K \, e +  N_K(x,e)  
\end{equation}
where $A_K:K \to \real^n$ is the restriction of $A$ to $K$ and $N_K$ is the nonlinear part of $F_K$ which lies in $S_{1,K}$. We note that it may happen that 
$N_K(x,e)=0$ for all $(x,e) \in \real_+^n \times K$ 
due to cancellation of nonlinear terms when summing over 
$j$ to obtain $F_K(x,e)$. 

Assuming a decomposition $\real^n = E \oplus K$, we define $\Bar{A}_K:K \to K$ by 
\begin{equation}\label{eq-barAK}
\Bar{A}_K = \Pi_K \, A_K,    
\end{equation}
where $\Pi_K:\real^n \to K$ is the projection onto $K$ along $E$.

The next proposition provides a sufficient condition that guarantees the applicability of Theorem \ref{genthm}. This condition requires that either $S_{1,K} \cap K = \{0\}$  or $N_K=0$
and that $\Bar{A}_K$ is Hurwitz. 

\begin{proposition}\label{prop1}
Suppose we observe a CRN for which 
$S_{1,K} \cap K = \{0\}$ or $N_K(x,e)=0$ for all $(x,e) \in \real_+^n \times K$. Then we may pick $E$ to satisfy $\real^n = E \oplus K$ and $\text{range}(N_K) \subset E$. Suppose further that
$\Bar{A}_K$ is Hurwitz. 
Then the contractivity condition \eqref{eq-contract} is satisfied and hence the design goal is achieved. 
Suppose further that the symmetric part $(\Bar{A}_K + {\Bar{A}_K}^T)/2$ of $\Bar{A}_K$ is also Hurwitz. 
Then, the exponent $\alpha_0>0$ in \eqref{eq-contract} satisfies the bound 
\[
\alpha_0 \geq |\lambda_{\text{max}}(\Bar{A}_K + {\Bar{A}_K}^T)|/2,
\]
where $\lambda_{\text{max}}(M)$ is the greatest eigenvalue of a real symmetric matrix $M$.
		\end{proposition}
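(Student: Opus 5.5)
We will first pick $E$. If $S_{1,K}\cap K=\{0\}$, we take any subspace $E$ complementary to $K$ that contains $S_{1,K}$. Such an $E$ exists because $S_{1,K}\cap K=\{0\}$: extend a basis of $S_{1,K}$ by vectors to a complement of $K$, which is possible since $\dim S_{1,K}\le n-\dim K$. Since $N_K$ takes values in $S_{1,K}$, this gives $\text{range}(N_K)\subset E$. If instead $N_K\equiv 0$, any complement $E$ of $K$ works.

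Next we construct the inner product. Write $\real^n=E\oplus K$ and let $\Pi_E,\Pi_K$ be the associated projections. Since $\Bar A_K:K\to K$ is Hurwitz, the Lyapunov equation on $K$ (with respect to any fixed inner product on $K$, say the restriction of the standard one) has a solution $P$, symmetric positive definite on $K$, with $P\Bar A_K+\Bar A_K^TP=-I$. We define
\[
(u,v)_N=(\Pi_E u,\Pi_E v)+(\Pi_K u, P\,\Pi_K v).
\]
This is an inner product on $\real^n$, and $E\perp_N K$ by construction. Note that $E$ and $N$ do not depend on $\Gamma$, as Remark \ref{rem-bigthm} requires.

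Now we verify \eqref{eq-contract}. For $e\in K$ and $x\in\real_+^n$ we have $F_K(x,e)=A_Ke+N_K(x,e)$ with $N_K(x,e)\in E$. Orthogonality kills the nonlinear term, and $(e,w)_N=(e,\Pi_K w)_N$ for $w\in\real^n$, so
\[
(e,F_K(x,e))_N=(e,\Pi_KA_Ke)_N=(e,P\Bar A_Ke)=\tfrac12 (e,(P\Bar A_K+\Bar A_K^TP)e)=-\tfrac12|e|^2\le-\alpha_0|e|_N^2,
\]
with $\alpha_0=1/(2\lambda_{\max}(P))$. This estimate is uniform in $x$, so it holds on every compact $\Gamma$. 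Since $f$ is $C^1$ and hence locally Lipschitz, Theorem \ref{genthm} applies for each $\Gamma$ with the same $E$, and the design goal follows.

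For the second claim, suppose $(\Bar A_K+\Bar A_K^T)/2$ is Hurwitz. Here the transpose is taken with respect to the chosen inner product on $K$. We then take $P=I$ on $K$ instead of the Lyapunov solution. The same computation gives
\[
(e,F_K(x,e))_N=\tfrac12(e,(\Bar A_K+\Bar A_K^T)e)\le \tfrac12\lambda_{\max}(\Bar A_K+\Bar A_K^T)|e|^2=-\tfrac12|\lambda_{\max}(\Bar A_K+\Bar A_K^T)|\,|e|_N^2,
\]
using $|e|_N=|e|$ on $K$. This yields the stated bound on $\alpha_0$.

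The main obstacle is bookkeeping rather than analysis. One must be precise that $\Bar A_K^T$ is the adjoint on $K$ in the inner product used there, and check that $\Pi_K A_K$ and not $A_K$ is what appears once the $E$-component is discarded. The existence of $E\supset S_{1,K}$ complementary to $K$ also needs the dimension count above stated explicitly.
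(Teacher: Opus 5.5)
Your proof is correct. The first part matches the paper's proof up to normalization: you solve $P\bar A_K+\bar A_K^TP=-I$ where the paper uses $-2I$, so your $\alpha_0=1/(2\lambda_{\max}(P))$ is the paper's $1/\lambda_{\max}(N)$ with $N=2P$.

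The second part takes a different route. The paper keeps the Lyapunov inner product and cites the Smith--Lancaster bound $\lambda_{\max}(N)\le 2/|\lambda_{\max}(\bar A_K+\bar A_K^T)|$. That way the \emph{same} inner product that gives contractivity under the Hurwitz hypothesis also yields the explicit rate. You instead drop the Lyapunov solution and use the unweighted inner product on $K$ ($P=I$), which gives the bound in one line with no citation.

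Strictly, what you establish is that \eqref{eq-contract} holds for \emph{some} admissible inner product with $\alpha_0\ge|\lambda_{\max}(\bar A_K+\bar A_K^T)|/2$. That is enough for the design goal. The subspace $E$, and hence the observer, does not change, and the exponential rate from Theorem~\ref{genthm} carries over to the Euclidean norm by norm equivalence.

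The paper's version says slightly more: the Lyapunov-based $\alpha_0$ is never worse than this naive one. You can recover that in one line in your normalization. Take a unit eigenvector $v$ of $P$ for $\lambda_{\max}(P)$. Then $-1=v^T(P\bar A_K+\bar A_K^TP)v=\lambda_{\max}(P)\,v^T(\bar A_K+\bar A_K^T)v\le\lambda_{\max}(P)\,\lambda_{\max}(\bar A_K+\bar A_K^T)$. Hence $1/(2\lambda_{\max}(P))\ge|\lambda_{\max}(\bar A_K+\bar A_K^T)|/2$, without appealing to the cited references.
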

\begin{proof}
As  $\Bar{A}_K$ is Hurwitz, there exists a positive definite linear map $N:K \to K$ such that
		\[ {\Bar{A}_K}^T \, N + N  \, \Bar{A}_K  = -2 I\]
		where $I$ is the identity map on $K$. Thus $N$ defines an inner product on $K$. 
We extend this inner product $(\cdot, \cdot)_N$ to $\mathbb{R}^n $ as follows:
		\[
		(a,b)_N = 
		\begin{cases} 
			(a,b) & \text{if } a,b \in E, \text{ (the standard inner product), }\\ 
			0 & \text{if } a \in E, \, b \in K \text{ or } a \in K, \, b \in E, \\ 
			(a,b)_N & \text{if } a,b \in K. 
		\end{cases}
		\]
Let $e\in K$ and $x\in \real_+^n$. By our hypothesis, either the nonlinear part $N_K(x,e) \in S_{1,K} \subset E$ or $N_K(x,e)=0$. It follows that 
$(e, N_K(x,e))_N=0$. Hence 
\begin{align*}
\left(e,F_K(x,e) \right)_N
			&= \left(e, A_K e \right)_N\\
			= \left(e, \Bar{A}_K e \right)_N
			&= - |e|^2
            \leq -\frac{1}{\lambda_{\text{max}}(N)}|e|_N^2,
		\end{align*}
where $\lambda_{\text{max}}(N)$ is the greatest eigenvalue of $N$.  
Hence with
\[
\alpha_0= \frac{1}{\lambda_{\text{max}}(N)}, 
\]
the result follows. From \cite{smith1965bounds, lancaster1970explicit}, if $\Bar{A}_K + {\Bar{A}_K}^T$ is negative definite, 
then we may obtain the bound 
\[
\lambda_{\text{max}}(N) \leq \frac{2}{|\lambda_{\text{max}}(\Bar{A}_K + {\Bar{A}_K}^T)|}, 
\]
and hence $\alpha_0 \geq |\lambda_{\text{max}}(\Bar{A}_K + {\Bar{A}_K}^T)|/2$. 
\end{proof}

\paragraph{Mass action CRNs with a subset of species observed}
Before we show examples of application of Proposition \ref{prop1}, we shall consider the special case of 
mass action CRNs where a subset of the species concentrations are observed. 
 
We may write the species concentration vector $x \in \real_+^n$ as $x=(x_o,x_u)$ where $x_o$ is the vector concentration of the observed species and $x_u$ is the vector concentration of the unobserved species. Likewise 
we may write $\nu_j^\pm=(\nu_{j,o}^\pm,\nu_{j,u}^\pm)$ and 
$\nu_j=(\nu_{j,o},\nu_{j,u})$. 

Consider a reaction $j$. The mass action form of reaction rate is given by $a_j(x) = k_j \, x^{\nu^-_j}$. 
Let $e \in K$. Then we may write $e=(0,e_u)$ as the observed component $e_o=0$. Then for $x \in \real_+^n$ and $e \in K$ we have that 
\begin{equation}\label{eq-psij-mass-action}
\psi_j(x,e) = k_j [(x+e)^{\nu_j^-} - x^{\nu_j^-}] = k_j \, {x_o}^{\nu_{j,o}^-} \, [(x_u + e_u)^{\nu_{j,u}^-} - {x_u}^{\nu_{j,u}^-}].
\end{equation}
From this we may deduce the following:
\begin{enumerate}
\item $j \in \sR_{0,K}$ if and only if $|\nu_{j,u}^-|=0$.
In words, $j \in \sR_{0,K}$ if and only if all the reactants in $j$ are observed species. 
In this case $\psi_j(x,e)=0$ for $(x,e) \in \real_+^n \times K$. 
\item $j \in \sR_{1,K}$ if and only if $|\nu_{j,u}^-|=1$  and $|\nu_{j,o}^-|=0$. 
In words,  $j \in \sR_{1,K}$ if and only if there is exactly one reactant specie $i$ in reaction $j$, that specie $i$ is an unobserved specie and its stoichiometric coefficient is $1$.   
In this case $\psi_j(x,e) = k_j \, {e_u}^{\nu_{j,u}^-}$. 
\end{enumerate}
We also obtain that the linear part $A_K \, e$ of $F_K(x,e)$ is given by
\begin{equation}\label{eq-AK-mass-action}
A_K \, e = \sum_{j \in \sR_{1,K}} \nu_j \, k_j \, {e_u}^{\nu_{j,u}^-} \quad \forall e \in K.    
\end{equation}

\subsection{Examples showing the application of Proposition \ref{prop1}}\label{sec-prop1-eg}

\paragraph{Example $1$:  Lotka-Volterra model for population dynamics}\mbox 
We revisit the Lotka-Volterra example considered in 
Section \ref{sec-CRN}. 
We assume the mass action form of reaction rate functions.
\begin{figure}[htb]
     \centering
     \includegraphics[width=0.8\linewidth]{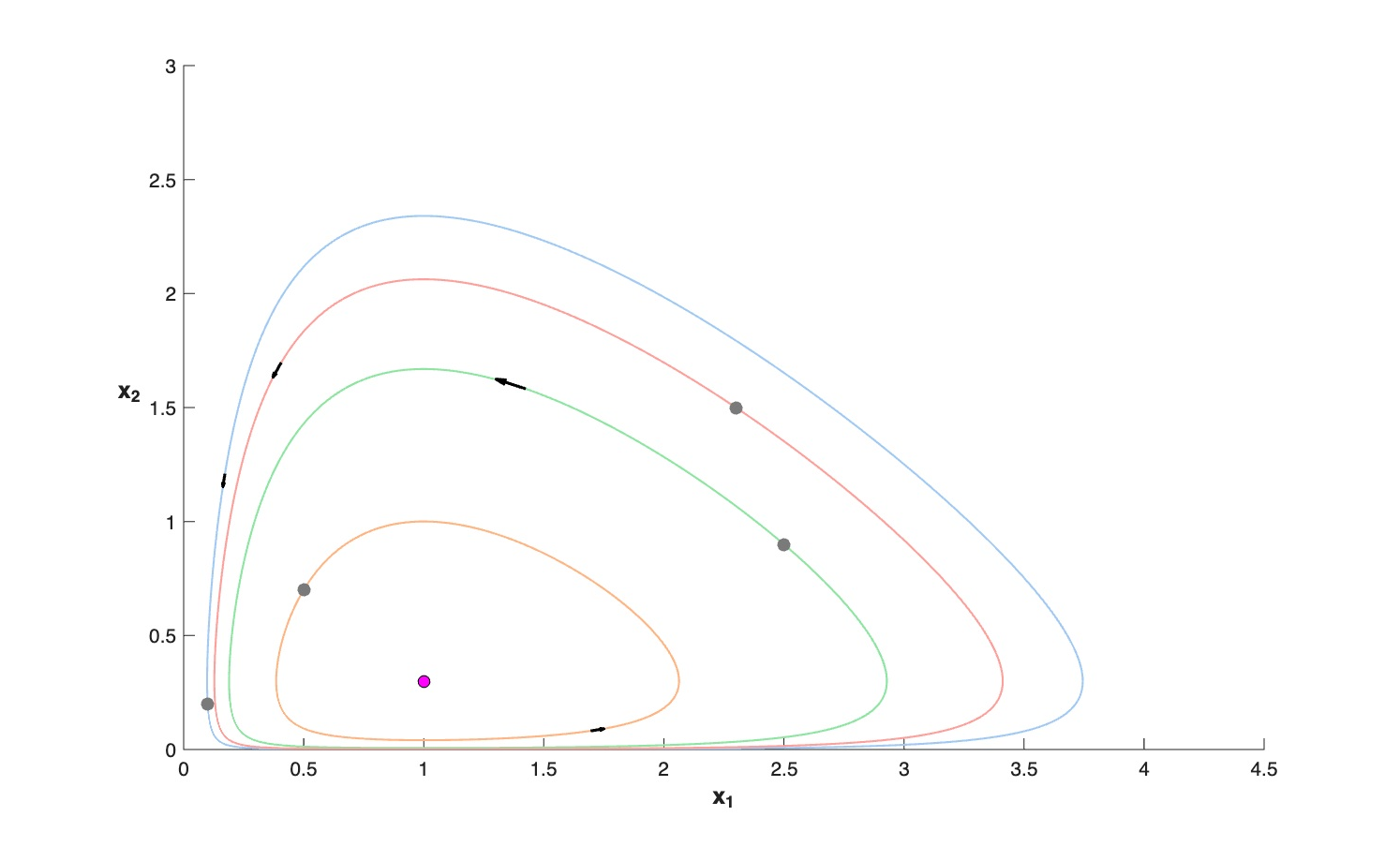}
     \caption{Phase-portrait: Lotka-Volterra model.}
     \label{lv_phase}
\end{figure}
        
The stoichiometric vectors and rate functions are given by
		\begin{align*}
			\nu_1 &= \begin{pmatrix} 1 \\ 0 \end{pmatrix}, & 
			\nu_2 &= \begin{pmatrix} -1 \\ 1 \end{pmatrix}, & 
			\nu_3 &= \begin{pmatrix} 0 \\ -1 \end{pmatrix} ,\\
			a_1(x) &= k_1 x_1, & 
			a_2(x) &= k_2 x_1 x_2, & 
			a_3(x) &= k_3 x_2
		\end{align*}
    where $x_i$ is the $i$th species concentration.
The system equations may be written as
        \[
		\dot{x} = k_1 x_1 \begin{pmatrix} 1 \\ 0 \end{pmatrix} + k_2 x_1 x_2 \begin{pmatrix} -1 \\ 1 \end{pmatrix} + k_3 x_2 \begin{pmatrix} 0 \\ -1 \end{pmatrix} 
		\]
It is well known that the solutions corresponding to strictly positive initial conditions are periodic orbits (one of them being an equilibrium). Moreover, $x_1=0$ 
and $x_2=0$ are invariant sets. If $x_1(0)=0$, 
then the solution limits to $(0,0)$ and if $x_2(0)=0$ then $x_1(t) \to \infty$. 
Hence, we shall assume that $x_2(0)>0$. 

The linear part $A$ is given by
		\[
		A = \begin{bmatrix} k_1 & 0 \\ 0 & -k_3 \end{bmatrix}. 
		\]
Suppose we observe $x_1$ (the prey population). Then it follows that $\sR_{0,K} = \{1\}$ (as reaction 1 consists only of observed species as reactants) and $\sR_{1,K} = \{3\}$.         
\[
		\begin{aligned}
			S_{1,K} &= \text{span}\left\{ \begin{pmatrix} -1 \\ 1 \end{pmatrix} \right\}, \\
			K &= \text{span}\left\{ \begin{pmatrix} 0 \\ 1 \end{pmatrix} \right\}.
		\end{aligned}
		\]
Thus, $S_{1,K} \cap K = \{0\}$ and we must choose $E=S_{1,K}$.        
        Now we have
        \begin{align*}
			\begin{bmatrix} k_1 & 0 \\ 0 & -k_3 \end{bmatrix}\begin{pmatrix} -1 \\ 1 \end{pmatrix}=\begin{pmatrix} -k_1 \\ -k_3 \end{pmatrix} &= k_1 \begin{pmatrix} -1 \\ 1 \end{pmatrix} + (-k_1-k_3) \begin{pmatrix} 0 \\ 1 \end{pmatrix},\\
			\begin{bmatrix} k_1 & 0 \\ 0 & -k_3 \end{bmatrix}\begin{pmatrix} 0 \\ 1 \end{pmatrix}=\begin{pmatrix} 0 \\ -k_3 \end{pmatrix} &= 0 \begin{pmatrix} -1 \\ 1 \end{pmatrix} + (-k_3) \begin{pmatrix} 0 \\ 1 \end{pmatrix}.
		\end{align*}
Then, we may write $A$ with respect to the decomposition $\real^2 = E \oplus K$ as		
		\[
		A = \left[ \begin{array}{c|c}
			k_1 & 0 \\
			\hline
			  \ \ -k_1-k_3 & -k_3
		\end{array} \right] \quad \text{w.r.to } E \oplus K.
		\]
	Therefore, $\Bar{A}_K = -k_3$ is Hurwitz.
Thus Proposition \ref{prop1} is applicable for this model
 with $\alpha_0 \geq k_3$. 

\paragraph{Example 2: The Willamowski-Rössler chaotic system}\mbox \\

This system \cite{gaspard2005rossler} relies on the interactions of three chemical species $X_1,\ X_2$ and $X_3$ governed by the following five reversible reactions:				
		\begin{align*}
			X_1 &\xleftarrow[k_2]{}\xrightarrow{k_1} 2X_1 \\
			X_1+X_2 &\xleftarrow[k_4]{}\xrightarrow{k_3} 2X_2 \\
			X_2 &\xleftarrow[k_6]{}\xrightarrow{k_5} \emptyset \\
			X_1+X_3 &\xleftarrow[k_8]{}\xrightarrow{k_7} \emptyset\\
			X_3 &\xleftarrow[k_{10}]{}\xrightarrow{k_9} 2X_3
		\end{align*}
where $k_j$ denote the rate constants and $k_j>0$ for $j\in\{1,...,10\}$. Here the reaction rate functions follow the mass action form by assumption.				
\begin{figure}[htb]
     \centering
     \includegraphics[width=0.8\linewidth]{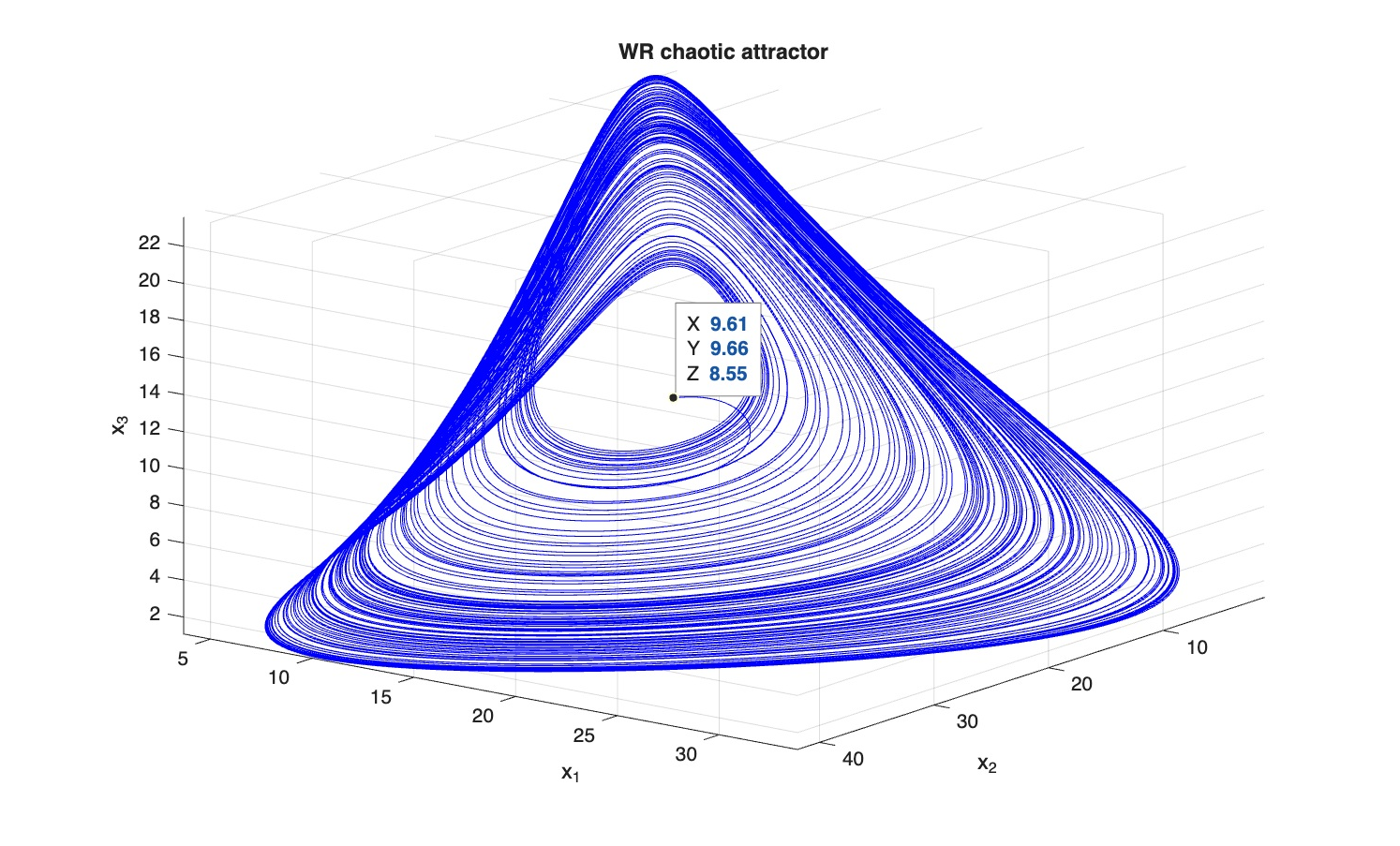}
     \caption{Chaotic attractor: Willamowski-Rössler model.}
     \label{wr2}
\end{figure}
This system is known to be chaotic \cite{gaspard2005rossler}. 
The stoichiometric vectors ($\nu_j$) and rate functions ($a_j(x)$) are given by				
				\[
				\nu_1 = \begin{pmatrix} 1 \\ 0 \\ 0 \end{pmatrix}, \quad
				\nu_2 = \begin{pmatrix} -1 \\ 0 \\ 0 \end{pmatrix}, \quad
				\nu_3 = \begin{pmatrix} -1 \\ 1 \\ 0 \end{pmatrix}, \quad
				\nu_4 = \begin{pmatrix} 1 \\ -1 \\ 0 \end{pmatrix}, \quad
                \nu_5 = \begin{pmatrix} 0 \\ -1 \\ 0 \end{pmatrix},
				\]
				\[
				\nu_6 = \begin{pmatrix} 0 \\ 1 \\ 0 \end{pmatrix}, \quad
				\nu_7 = \begin{pmatrix} -1 \\ 0 \\ -1 \end{pmatrix}, \quad
				\nu_8 = \begin{pmatrix} 1 \\ 0 \\ 1 \end{pmatrix},
				\nu_9 = \begin{pmatrix} 0 \\ 0 \\ 1 \end{pmatrix}, \quad
				\nu_{10} = \begin{pmatrix} 0 \\ 0 \\ -1 \end{pmatrix},
				\]
				
\begin{align*}
			a_1(x) &= k_1 x_1, & a_2(x) &= k_2 x_1^2, \\
			a_3(x) &= k_3 x_1 x_2, & a_4(x) &= k_4 x_2^2, \\
			a_5(x) &= k_5 x_2, & a_6(x) &= k_6, \\
			a_7(x) &= k_7 x_1 x_3, & a_8(x) &= k_8, \\
			a_9(x) &= k_9 x_3, & a_{10}(x) &= k_{10} x_3^2
\end{align*}
where $x_i(\geq 0)$ is the concentration of species $X_i$ and $i\in\{1,2,3\}$.\\
Therefore, by the mass-action kinetics, the system of differential equations becomes
\begin{align*}
    \dot{x}_1 &= k_1 x_1 - k_2 x_1^2 - k_3 x_1 x_2 + k_4 x_2^2 - k_7 x_1 x_3 + k_8, \\
    \dot{x}_2 &= k_3 x_1 x_2 - k_4 x_2^2 - k_5 x_2 + k_6, \\
    \dot{x}_3 &= -k_7 x_1 x_3 + k_8 + k_9 x_3 - k_{10} x_3^2.
\end{align*}

Now, we show how to use Proposition \ref{prop1} for this chaotic model.
The linear part is given by	
				\[
				A = \begin{bmatrix}
					k_1 & 0 & 0 \\
					0 & -k_5 & 0 \\
					0 & 0 & k_9
				\end{bmatrix}.
				\]
Suppose we observe the concentration of species $X_1$ and $X_3$ i.e., $x_1$ and $x_3$. Then $\sR_{0,K} =\{1, 2, 6, 7, 8, 9, 10\}$ 
and $\sR_{1,K} = \{ 5\}$. We also have

				\[
				K = \left\{ \begin{pmatrix} 0 \\ 1 \\ 0 \end{pmatrix} \right\}:=\{v_1\},\quad S_{1,K} = \text{span}\left\{ \begin{pmatrix}
				    -1\\1\\0
				\end{pmatrix}\right\}
                \]
where $v_1$ is defined and $K \cap S_{1,K}=\{0\}$. We select
                \[
                E=\left\{ \begin{pmatrix} -1 \\ 1 \\ 0 \end{pmatrix},  \begin{pmatrix} 0 \\ 0 \\ 1 \end{pmatrix} \right\}:=\{v_2,v_3\}
				\]
so that $S_{1,K} \subset E$ and $\real^3= E \oplus K$. 
(Note that, there are other choices for $E$.)  Then we can write the linear part of $F_K$ i.e., $A_Ke$ as
\[
A_Ke = \nu_5 k_5 e_2
\]
Then the calculations for finding $\bar{A}_K$ w.r.to $\{v_1\}$ as the basis for $K$ and $\{v_2, v_3\}$ as the basis for $E$ become as follows:
\[
A_Kv_1 = (-k_1-k_5)v_1+k_1v_2, \quad A_Kv_2 = k_9v_3, \quad A_Kv_3 = -k_5v_1,
\]

				Therefore,

				\[
				A = \left[\begin{array}{c c|c}
					k_1 & 0 & 0 \\
                    
					0 & k_9 & 0 \\
                    \hline
					-k_1-k_5 & 0 & -k_5
				\end{array}\right] \ \ \text{w.r.to \ \ $E\oplus K$}.
				\] 
			As $\bar{A}_K= -k_5$ is Hurwitz, we can use Proposition \ref{prop1} in this case and we obtain $\alpha_0 \geq k_5$.

\subsection{Decomposition $F_K(x,e) = B_K(x) \, e + \tilde{N}_K(x,e)$}\label{sec-prop2}
The Proposition \ref{prop1} considered the situation in which the nonlinear terms of $F_K$ were complementary to $K$, which allowed the possibility of effectively canceling out the nonlinear terms. This relies on the dimension of the nonlinearity not exceeding 
the rank of $C$. Here we provide a more general strategy which relies on 
canceling out only those terms 
of $F_K$ which are nonlinear in $e$. This relies on the idea that the nonlinear terms in $F_K$ that are linear in $e$ may be controlled.   Recalling the definition $\psi_j(x,e)=a_j(x+e)-a_j(x)$, we define 
the sets of reactions $\sR_{2,K}$ as follows:
\begin{equation}
\sR_{2,K} = \{ j \, | \, j \notin \sR_{0,K} \text{ and }\psi_j(x,e) = \frac{d a_j}{dx}(x) \, e \quad \forall (x,e) \in \real^n \times K \}.
\end{equation}
Thus, $\sR_{2,K}$ are the reactions $j$ for which the restriction of $\psi_j$ to $\real^n \times K$ is nonzero and linear in $e$ but not necessarily linear in $(x,e)$. 
It is clear that $\sR_{1,K} \subset \sR_{2,K}$. We define the subspace 
$S_{2,K}$ by
\begin{equation}
S_{2,K} = \text{span}\{ \nu_j \, | \, j \notin (\sR_{0,K} \cup \sR_{2,K})\}.   
\end{equation}
Thus $S_{2,K}$ contains the range of the terms in $F_K$ that are nonlinear in $e$. Note that $S_{2,K} \subset S_{1,K}$. 
We may write
\begin{equation}
F_K(x,e) = \sum_{j=1}^r \nu_j \, \frac{d a_j}{d x}(x)\, e + \Tilde{N}_K(x,e)
\end{equation}
where $\Tilde{N}_K(x,e)$ is nonlinear in $e$. 
We note that it may happen that $\Tilde{N}_K$ is zero due to cancellations. Define $B(x) \in \real^{n \times n}$ (for $x \in \real^n$) by
\begin{equation}\label{eq-B}
B(x) = \sum_{j=1}^r \nu_j \, \frac{d a_j}{dx}(x).    
\end{equation}
Denote by $B_K(x)$ the restriction of the linear map $B(x):\real^n \to \real^n$ to $K$. Then we may write
\begin{equation}
F_K(x,e) = B_K(x) \, e + \Tilde{N}(x,e) \quad \forall (x,e) \in \real_+^n \times K.    
\end{equation}
Given a decomposition $\real^n = E \oplus K$ define $\Bar{B}_K(x):K \to K$ 
by $\Bar{B}_K(x) = \Pi_K \, B_K(x)$. 

\begin{proposition}\label{prop2}
Suppose we observe a CRN for which either $S_{2,K} \cap K = \{0\}$ or $\Tilde{N}_K$ is zero. 
Pick $E$ such that $\real^n = E \oplus K$ with $\text{range}(\Tilde{N}_K) \subset E$.   Let $\Gamma \subset \real_+^n$ be a compact set and suppose that there exist finite number of linear maps $V_i:K \to K$ for $i=1,\dots,p$ such that 
$\Bar{B}_K(\Gamma)$ belongs to their convex hull.
Suppose further that 
$V_i$ are Hurwitz for each $i=1,\dots,p$ and the following 
Lyapunov inequalities have a common symmetric positive definite solution $N:K \to K$:
\begin{equation}\label{eq-Lyap-ineq-common}
N \, V_i + {V_i}^T \, N \leq - 2 I, \quad 1 \leq i \leq p.  
\end{equation}
Then the contractivity condition \eqref{eq-contract} is satisfied and hence the design goal is met. 
 \end{proposition}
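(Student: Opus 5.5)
The plan mirrors the proof of Proposition \ref{prop1}, with the constant matrix $\Bar{A}_K$ replaced by the family $\Bar{B}_K(x)$, $x \in \Gamma$, and the single Lyapunov equation replaced by the common Lyapunov inequalities \eqref{eq-Lyap-ineq-common}.

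\emph{Step 1: the inner product.} Let $N:K\to K$ be the common symmetric positive definite solution of \eqref{eq-Lyap-ineq-common}. Define $(\cdot,\cdot)_N$ on $\real^n$ exactly as in Proposition \ref{prop1}:
\begin{itemize}
\item it is the standard inner product on $E$;
\item it is $(a,b)_N = (a, N b)$ on $K$;
\item it declares $E \perp_N K$.
\end{itemize}
This is a genuine inner product because $\real^n = E\oplus K$ and both pieces are positive definite. With this choice $(e,w)_N = (e,\Pi_K w)_N$ for every $e\in K$ and $w \in \real^n$, since the $E$-component of $w$ is $N$-orthogonal to $K$.

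\emph{Step 2: removing the nonlinear term.} Fix $x\in\Gamma$ and $e\in K$ and write $F_K(x,e) = B_K(x)e + \Tilde{N}_K(x,e)$. By hypothesis, either $\Tilde{N}_K = 0$ or $\Tilde{N}_K(x,e) \in S_{2,K}\subset E$. In both cases $(e,\Tilde{N}_K(x,e))_N = 0$. It follows that
\[
(e,F_K(x,e))_N = (e,\Pi_K B_K(x) e)_N = (e, N\Bar{B}_K(x) e) = \tfrac12\bigl(e,(N\Bar{B}_K(x) + \Bar{B}_K(x)^T N)e\bigr).
\]
Note that $x \in \Gamma\subset\real_+^n$, so the decomposition of $F_K$ on $\real_+^n\times K$ applies.

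\emph{Step 3: the convex-hull step.} This is the only genuinely new ingredient. Write $\Bar{B}_K(x) = \sum_i \theta_i V_i$ with $\theta_i\ge 0$ and $\sum_i\theta_i = 1$. The map $V\mapsto NV + V^TN$ is linear, so
\[
N\Bar{B}_K(x) + \Bar{B}_K(x)^TN = \sum_i \theta_i (NV_i + V_i^TN) \le -2I
\]
in the sense of quadratic forms. Substituting into Step 2 gives $(e,F_K(x,e))_N \le -|e|^2 \le -|e|_N^2/\lambda_{\max}(N)$. Hence \eqref{eq-contract} holds on $\Gamma\times K$ with $\alpha_0 = 1/\lambda_{\max}(N)$.

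\emph{Step 4: invoking Theorem \ref{genthm}.} The contractivity condition gives the hypothesis of Theorem \ref{genthm} for this $\Gamma$. In particular it holds on $K\cap B_{2R,N}$, the restriction of $F_K$ to which is what the theorem requires. Since $E$ is fixed by the range condition and does not depend on $\Gamma$, Remark \ref{rem-bigthm} shows that the design goal is met.

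The main obstacle I anticipate is bookkeeping rather than analysis. Strictly, the contractivity condition asks for a single $N$ that works for every compact $\Gamma$, whereas the hypothesis here supplies $V_i$ and $N$ for one given $\Gamma$. I would therefore state the conclusion for that $\Gamma$, or observe that the argument goes through whenever the same $N$ works for all $\Gamma$, and note that $E$ itself is chosen independently of $\Gamma$.
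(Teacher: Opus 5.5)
Your proof is correct and follows the paper's argument essentially step for step: the same extension of the $N$-inner product with $E\perp_N K$, the same vanishing of $(e,\Tilde{N}_K(x,e))_N$, and the same convex-combination bound $(e,\Bar{B}_K(x)e)_N=\sum_i c_i(x)(e,V_ie)_N\le -|e|_N^2/\lambda_{\max}(N)$, which your linearity-of-$V\mapsto NV+V^TN$ step expresses. Your bookkeeping caveat is apt: the paper's proof likewise establishes the bound only for the given $\Gamma$, and relies, in the spirit of Remark~\ref{rem-bigthm}, on Theorem~\ref{genthm} allowing $N$ and $\alpha_0$ to depend on $\Gamma$ while $E$ does not.
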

\begin{proof}
The common solution $N$ defines an inner product $(\cdot,\cdot)_N$ inside $K$ 
such that for all $e \in K$ and for $1 \leq i \leq p$ it holds that
\[
(e, V_i \, e)_N \leq - (e,e) \leq -\frac{1}{\lambda_{\text{max}}(N)} (e,e)_N,
\]
where $\lambda_{\text{max}}(N)$ is the greatest eigenvalue of $N$. 
As in Proposition \ref{prop1} we extend the inner product $(\cdot,\cdot)_N$ defined by $N$ in $K$ to $\real^n$ so that $E \perp_N K$ and the extension coincides with the standard inner product inside $E$. We also observe that for $(x,e) \in \real_+^m \times K$, $(e,\Tilde{N}(x,e))_N=0$.  
Hence for $x \in \Gamma$ and $e \in K$
\[
(e,F_K(x,e)_N = (e, B_K(x) \, e)_N = (e,\Bar{B}_K(x) \, e)_N.  
\]
Moreover, for $x \in \Gamma$, there exist $c_1(x), \dots, c_p(x) \geq 0$ with $\sum_{i=1}^p c_i(x)=1$ such that 
\[
\Bar{B}_K(x) = \sum_{i=1}^p c_i(x) \, V_i.
\]
Hence 
\[
(e,\Bar{B}_K(x) e)_N = \sum_{i=1}^p c_i(x) \, (e,V_i e)_N \leq -\frac{1}{\lambda_{\text{max}}(N)} (e,e)_N
\]
for all $x \in \Gamma$ and all $e \in K$. 
\end{proof}

Proposition \ref{prop2} requires a common (positive definite) solution $N$ to the set of Lyapunov inequalities 
\eqref{eq-Lyap-ineq-common}. A useful sufficient condition in an analytical form 
may be found in \cite{liberzon1999} which we restate here for completeness. 
\begin{lemma}[Theorem 2, \cite{liberzon1999}.]
Let $V_i \in \real^{d \times d}$ for $i=1,\dots,p$ be Hurwitz. Suppose further that
the Lie algebra generated by $\{V_i\}$ is solvable (see \cite{liberzon1999} for a definition). Then, the following system of Lyapunov inequalities have a common 
symmetric positive definite solution $N \in \real^{d \times d}$:
\[
N \, V_i + {V_i}^T \, N \leq -2 I, \quad 1 \leq i \leq p.
\]
\end{lemma}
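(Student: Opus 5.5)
The plan is to reduce to the upper-triangular case by Lie's theorem, build a common Lyapunov function there by a diagonal rescaling that shrinks the off-diagonal entries, and then return to a real symmetric solution.

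\emph{Step 1: simultaneous triangularization.} Let $\mathfrak g\subset \mathbb{C}^{d\times d}$ be the complexification of the Lie algebra generated by $\{V_i\}$; it is still solvable. By Lie's theorem, a solvable Lie algebra of complex matrices is simultaneously triangularizable. So there is an invertible $T\in\mathbb{C}^{d\times d}$ with $U_i := T^{-1}V_iT$ upper triangular for every $i$. The diagonal entries of $U_i$ are the eigenvalues of $V_i$. Since each $V_i$ is Hurwitz, they satisfy $\operatorname{Re}(U_i)_{kk}\le -\sigma<0$ for some $\sigma>0$ uniform over the finitely many $i$ and $k$.

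\emph{Step 2: diagonal rescaling.} Let $S_\delta=\operatorname{diag}(1,\delta,\delta^2,\dots,\delta^{d-1})$ with $\delta\in(0,1]$. Then $W_i:=S_\delta^{-1}U_iS_\delta$ is upper triangular with the same diagonal, and its $(k,l)$ entry for $l>k$ is $\delta^{l-k}(U_i)_{kl}$. Write $W_i=\Lambda_i+R_i$, where $\Lambda_i$ is the diagonal part and $\|R_i\|\le \delta\, c$ with $c$ depending only on the finitely many $U_i$. Then
\[
W_i^*+W_i \le -2\sigma I + 2c\delta I \le -\sigma I
\]
for $\delta\le \sigma/(2c)$, and this $\delta$ works uniformly in $i$. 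Put $M=TS_\delta$, so that $W_i=M^{-1}V_iM$. Set $P=(M^{-1})^*M^{-1}$, which is Hermitian positive definite. Then
\[
PV_i+V_i^*P=(M^{-1})^*(W_i^*+W_i)M^{-1}\le -\sigma (M^{-1})^*M^{-1}\le -\sigma\lambda_{\min}(P)\,I .
\]

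\emph{Step 3: back to real symmetric matrices, then normalization.} Since each $V_i$ is real, conjugating the inequality shows that $\bar P$ satisfies it as well. Hence $N_0:=\operatorname{Re}P=(P+\bar P)/2$ satisfies it too. $N_0$ is real symmetric and positive definite, because $x^TN_0x=x^*Px>0$ for real $x\ne0$. This gives $N_0V_i+V_i^TN_0\le -\eta I$ for some $\eta>0$. Finally, multiply by the positive scalar $2/\eta$ to obtain $N$ with $NV_i+V_i^TN\le -2I$ for all $i$.

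The main obstacle is Step 1. Lie's theorem requires an algebraically closed field, so the triangularization is only complex. This is exactly why Step 3 is needed: a real symmetric solution has to be recovered from a Hermitian one. The rescaling in Step 2 is routine once a uniform $\delta$ is chosen, which is possible because the family $\{V_i\}$ is finite.
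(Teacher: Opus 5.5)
Your proof is correct and is essentially the standard proof of the cited result (Theorem 2 of \cite{liberzon1999}); the paper itself gives no proof and only quotes that theorem. You use the same ingredients: Lie's theorem for a complex simultaneous triangularization, a diagonal weighting that shrinks the off-diagonal terms uniformly over the finite family (your $S_\delta$ amounts to the diagonal Lyapunov matrix $S_\delta^{-2}$ in triangular coordinates), and passage to $\mathrm{Re}\,P$ to obtain a real symmetric positive definite solution, which you then rescale to get $-2I$.
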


This Lie Algebraic  approach is attractive since the matrices $V_i$ will be functions of the parameters $k_j$ and one may be able to design an observer that works for all choices of $k_j>0$. However, since this is not a necessary condition, if the Lie algebra is not solvable, we cannot conclude that there is no common solution to \eqref{eq-Lyap-ineq-common}. 
We also note that, the problem of determining whether 
there is a (positive definite) solution $N$ to \eqref{eq-Lyap-ineq-common} for a given set of numerical square Hurwitz matrices $V_i$ reduces to the solution of a convex minimax problem with linear constraints \cite{horisberger1976regulators} and MATLAB tools are available under linear matrix inequalities (LMI). See also \cite{boyd1994linear}. This approach is useful when we have specific numerical values for the parameters $k_j$.

\paragraph{Mass action CRNs where concentrations of a subset of species are observed}In this case, finding the set $\sR_{2,K}$ and determining the set of matrices $V_i$ is 
relatively simple as we shall illustrate. 
From \eqref{eq-psij-mass-action} we can conclude that 
$j \in \sR_{2,K}$ (that is $\psi_j(x,e)$ is nonzero and is linear in $e$) if and only if $|\nu_{j,u}^-| = 1$. 
 In other words, 
$\sR_{2,K}$ consists of reactions for which there is exactly one unobserved reactant specie and its stoichiometric coefficient is $1$. We also have that for $j \in \sR_{2,K}$ 
\[
\psi_j(x,e) = k_j \, {x_o}^{\nu_{j,o}^-} \, {e_u}^{\nu_{j,u}^-}.
\]
Then $B_K(x) \, e$, the part of $F_K(x,e)$ that is linear in $e$ 
 defined earlier will be given by
\[
B_K(x) \, e = \sum_{j \in \sR_{2,K}} \nu_j \, k_j \, {x_o}^{\nu_{j,o}^-} \, {e_u}^{\nu_{j,u}^-} \quad \forall e \in K.
\]
We may write $B_K(x)$ in terms of $A_K$ as follows:
\[
B_K(x) \, e = A_K \, e + \sum_{j \in \sR_{2,K} \setminus \sR_{1,K}} x_o^{\nu_{j,o}^-} \, \nu_j \, k_j \, {e_u}^{\nu_{j,u}^-}.
\]
We define the linear maps $B_{j,K}:K \to \real^n$ 
by 
\[
B_{j,K} \, e = \sum_{j \in \sR_{2,K} \setminus \sR_{1,K}} \nu_j \, k_j \, {e_u}^{\nu_{j,u}^-}.
\]
Defining $\Bar{B}_{j,K}=\Pi_K \, B_{j,K}$, we may write 
\[
\Bar{B}_K(x) = \Bar{A}_K + \sum_{j\in \sR_{2,K} \setminus \sR_{1,K} } x_o^{\nu_{j,o}^-} \, \Bar{B}_{j,K}.
\]
Given any compact set $\Gamma \subset \real_+^n$, 
we may bound each species $i$ in an interval $[\ell_i,u_i]$. We may write the vectors $\ell$ and $u$ as $\ell=(\ell_o,\ell_u)$ and $u=(u_o,u_u)$. 
It follows that for $x \in \Gamma$, $x_o^{\nu_{j,o}^-}$ 
is a convex combination of $\ell_o^{\nu_{j,o}^-}$ 
and $u_o^{\nu_{j,o}^-}$. Consequently, the convex hull of the finite set 
\begin{equation}\label{eq-setV}
\mathcal{V} = \left\{ \Bar{A}_K + \sum_{j \in \Tilde{\sR}_{2,K}} m_j \, \Bar{B}_{j,K} \, \Big{|} \, m_j \in \{\ell_o^{\nu_{j,o}^-}, u_o^{\nu_{j,o}^-}\}\right\},
\end{equation}
contains $\Bar{B}_K(\Gamma)$. Here $\Tilde{\sR}_{2,K}=\sR_{2,K} \setminus \sR_{1,K}$. We observe that $\mathcal{V}$ contains at most $2^k$ points where $k$ is the number of reactions in $\Tilde{\sR}_{2,K}$. Frequently, two or more reactions in $\Tilde{\sR}_{2,K}$ may share the same monomial $x_o^{\nu_{j,o}^-}$, leading to a reduction in the number of elements needed to form the convex set.

\subsection{Examples for application of Proposition \ref{prop2}}\label{sec-prop2-eg}
\paragraph{Example 3: Oscillator}\mbox
This system relies on the interactions of five chemical species $X_1$, $X_2$, $X_3$, $X_4$ and $X_5$  governed by the following eight reactions:
        \begin{align*}
			\emptyset &\xrightarrow{k_1} X_1  \\
			X_1+2X_2 &\xrightarrow{k_2} 2X_3  \\
			X_3 &\xrightarrow{k_3} X_4  \\
			X_4 &\xrightarrow{k_4} X_2 \\
			 X_2 &\xrightarrow{k_5} X_5 \\
			X_1 + X_4 &\xrightarrow{k_6} X_3+X_5 \\
			X_1+ X_5 &\xrightarrow{k_7} X_3 \\
			X_5 &\xrightarrow{k_8} \emptyset
		\end{align*}
where $k_j$ denotes the rate constant for $j$th reaction and $k_j>0$ for $j\in \{1,2,...8\}$.       
	Stoichiometric vectors $(\nu_j)$ and assuming mass action form the rate functions $(a_j(x))$ are given by	
		\begin{align*}
			\nu_1 &= \begin{pmatrix} 1 \\ 0 \\ 0 \\ 0 \\ 0 \end{pmatrix}, &
			\nu_2 &= \begin{pmatrix} -1 \\ -2 \\ 2 \\ 0 \\ 0 \end{pmatrix}, &
			\nu_3 &= \begin{pmatrix} 0 \\ 0 \\ -1 \\ 1 \\ 0 \end{pmatrix}, &
			\nu_4 &= \begin{pmatrix} 0 \\ 1 \\ 0\\ -1 \\ 0 \end{pmatrix}, \\
			\nu_5 &= \begin{pmatrix} 0 \\ -1 \\ 0 \\ 0 \\ 1 \end{pmatrix}, &
			\nu_6 &= \begin{pmatrix} -1 \\ 0 \\ 1 \\ -1 \\ 1 \end{pmatrix}, &
			\nu_7 &= \begin{pmatrix} -1 \\ 0 \\ 1 \\ 0 \\ -1 \end{pmatrix}, &
			\nu_8 &= \begin{pmatrix} 0 \\ 0 \\ 0 \\ 0 \\ -1 \end{pmatrix},
		\end{align*}
		
		\begin{align*}
			a_1 &= k_1 , & a_2 &= k_2 x_1 x_2^2, & a_3 &= k_3 x_3, & a_4 &= k_4 x_4, \\
			a_5 &= k_5 x_2, & a_6 &= k_6 x_1 x_4, & a_7 &= k_7 x_1 x_5, & a_8 &= k_8 x_5
		\end{align*}
    where $x_i(\geq 0)$ is the concentration of species $X_i$ with $i\in\{1,2,...5\}$.    
        \paragraph{Observation model 1}
		Suppose that we observe concentrations of species $X_1$ and $X_3$ i.e. $x_1$ and $ x_3 $. 
	Then we have
		\[
		K = \text{span} \left\{ \begin{pmatrix} 0 \\ 1 \\ 0 \\ 0 \\ 0 \end{pmatrix}, \begin{pmatrix} 0 \\ 0 \\ 0 \\ 1 \\ 0 \end{pmatrix}, \begin{pmatrix} 0 \\ 0 \\ 0 \\ 0 \\ 1 \end{pmatrix} \right\} := \{v_1, v_2, v_3\},
        \]
where $v_1, v_2$ and $v_3$ defined. We note that $\sR_{0,K}=\{1, 3\}$ since reactions 1 and 3 do not have unobserved reactant species. Also $\sR_{1,K} = \{4, 5, 8\}$ 
as $4, 5$ and $8$ are the only reactions with exactly one specie as a reactant 
and that specie is unobserved and has stoichiometric coefficient $1$. We also note that $\sR_{2,K}=\{4, 5, 6, 7, 8\}$ since $4, 5, 6, 7$ and $8$ are the only reactions that have exactly one 
unobserved specie with stoichiometric coefficient $1$ as a reactant. 
Hence $S_{2,K} = \text{span}\{\nu_2\}$.   
Here we can see that $K \cap \ S_{2,K}={0}$. 
Since $\dim E=2$, there is freedom in choosing $E$ subject to $S_{2,K} \subset E$. We discuss two possibilities. 
     \paragraph{Observer 1 (for observation model 1)} We choose   
		\[
		E = \text{span} \left\{ \begin{pmatrix} -1 \\ -2 \\ 2 \\ 0 \\ 0 \end{pmatrix}, \begin{pmatrix} 1 \\ -2 \\ 0 \\ 0 \\ 0 \end{pmatrix} \right\} := \{v_4, v_5\}
		\]
where $v_4$ and $v_5$ are defined. 
Let $A_K \, e$ be the linear part of $F_K(x,e)$. Then for $e \in K$, $A_K \, e$ is obtained by considering the reactions in $\sR_{1,K}=\{4, 5, 8\}$. This leads to   
\[
A_K \, e = \nu_4 \, k_4 \, e_4 + \nu_5 \, k_5 \, e_2 + \nu_8 \, k_8 \, e_5,
\]
for $e \in K$. 
Moreover, letting $B_K(x) \, e$ be the part 
of $F_K(x,e)$ that is linear in $e$ for $e \in K$, then 
\[
B_K(x) \, e = A_K \, e + \nu_6 \, k_6 \, x_1 e_4 + \nu_7 \, k_7 \, x_1 e_5.
\]
Thus, for $e \in K$, we can write $B_K(x) e = A_K \, e + x_1 \, B_{1,K} \, e$ where 
\[
B_{1,K} \, e = \nu_6 \, k_6 \, e_4 + \nu_7 \, k_7 \,  e_5.
\]
From the definition of $\Bar{B}_K$ it follows that 
$\Bar{B}_K(x) = \Bar{A}_K + x_1 \, \Bar{B}_{1,K}$ 
where $\Bar{B}_{1,K} = \Pi_K \, B_{1,K}$ and $\Bar{A}_K = \Pi_K A_K$. In order to obtain the matrix representations 
of $\Bar{A}_K$ and $\Bar{B}_{1,K}$ with respect to the basis $\{v_1, v_2, v_3\}$ for $K$, we note the following calculations:
\[
\begin{aligned}
A_K v_1 &= -k_5 v_1 + k_5 v_3, \quad A_K v_2 = k_4 v_1 - k_4 v_2,\quad 
A_K v_3 = -k_8 v_3,\\
B_{1,K} v_1 &= 0, \quad
B_{1,K} v_2 = -k_6 v_2 + k_6 v_3 + \frac{k_6}{2} v_4 - \frac{k_6}{2} v_5, \quad
B_{1,K} v_3 = -k_7 v_3 + \frac{k_7}{2} v_4 - \frac{k_7}{2} v_5.
\end{aligned}
\]

Hence, in matrix form (w.r.t.\ the basis $\{v_1, v_2, v_3\}$ for $K$)
		\[
		\bar{A}_{K} = \begin{pmatrix}
			-k_5 & k_4 & 0 \\
			0 & -k_4 & 0 \\
			k_5 & 0 & -k_8
		\end{pmatrix},
		\]
		\[
		\bar{B}_{1K} = \begin{pmatrix}
			0 & 0 & 0 \\
			0 & -k_6 & 0 \\
			0 & k_6 & -k_7
		\end{pmatrix}.
		\]
Assuming the solution lies in a compact set $\Gamma$, 
we may choose an upper bound $u$ for $x_1(t)$ and take $0$ as its lower bound. Then, $\Bar{B}_K(x)$ for $x \in \Gamma$ lies in the convex combination of $\Bar{A}_K$ 
and $\Bar{A}_K + u \, \Bar{B}_{1,K}$. 
It is easy to verify that $\bar{A}_{K}$ and $(\bar{A}_{K}+u\bar{B}_{1K})$ are Hurwitz for all parameter values ($k_j>0$). It is shown that they generate a solvable Lie algebra in Lemma \ref{lie_lem}. Hence the conditions of Proposition \ref{prop2} are satisfied and 
  the design goal is met.       

        \paragraph{Observer 2 (for observation model 1)} If we choose 
        \[
		E = \text{span} \left\{ \begin{pmatrix} -1 \\ -2 \\ 2 \\ 0 \\ 0 \end{pmatrix}, \begin{pmatrix} -1 \\ 0 \\ 1 \\ 0 \\ 0 \end{pmatrix} \right\}:=\{v_4,v_5'\},
		\]
        where $v_4$ and $v_5'$ are defined. Changing $E$ does not change $B_K(x)$, $A_K$ and $B_{1,K}$. However, it changes the projection $\Pi_K$ and hence
        may change $\Bar{A}_K$ and $\Bar{B}_{1,K}$. 
                
        Now, the calculations to derive $\bar{A}_K$ and $\bar{B}_{1,K}$ are,
        \[
        A_K v_1 = -k_5 v_1 + k_5 v_3, \ A_K v_2 = k_4 v_1 - k_4 v_2,\
        A_K v_3 = -k_8 v_3,
        \]
and        
        \[
        B_K v_1 = 0, \
        B_K v_2 = -k_6 v_2 + k_6 v_3 + k_6 v_5', \
        B_K v_3 = -k_7 v_3 + k_7 v_5'.
        \]
        Hence we get the same $\bar{A}_K$ and $\bar{B}_{1K}$ as in Observer 1. So, with the same argument stated in case of Observer 1, 
        the conditions of Proposition \ref{prop2} are satisfied and the design goal is met. In Section \ref{sec_num_res} we see that while both observers achieve error convergence, the second observer seems to perform better. 
        
\paragraph{Observation model 2} If we observe the concentrations of species $X_1$ and $X_2$ i.e., $x_1$ and $x_2$, we have
\[
		K = \text{span} \left\{ \begin{pmatrix} 0 \\ 0 \\ 1 \\ 0 \\ 0 \end{pmatrix}, \begin{pmatrix} 0 \\ 0 \\ 0 \\ 1 \\ 0 \end{pmatrix}, \begin{pmatrix} 0 \\ 0 \\ 0 \\ 0 \\ 1 \end{pmatrix} \right\}:= \{ v_1, v_2, v_3\}
        \]
where $v_1$, $v_2$, $v_3$ are defined and $S_{2,K} = \{0\}$ since $|\nu_{j,u}^+|\leq 1$ for all the reactions $j$. 
     Hence $K \cap \ S_{2,K}={0}$. There is freedom in choosing $E$. We choose   
		\[
		E = \text{span} \left\{ \begin{pmatrix} 1 \\ 0 \\ -1 \\ 0 \\ 0 \end{pmatrix}, \begin{pmatrix} 0 \\ 1 \\ 0 \\ 0 \\ 0 \end{pmatrix} \right\}:=\{v_4,v_5\}
		\]
where $v_4$, $v_5$ are defined in this way and clearly $S_{2,K} \subset E$. 
With this observation model we get $\sR_{0,K}=\{1,2,5\}$, $\sR_{1,K}=\{3, 4, 8\}$ 
and $\sR_{2,K} = \{3, 4, 6, 7, 8\}$. 
Let $A_K \, e$ be the linear part of $F_K(x,e)$. Then for $e \in K$, $A_K \, e$ is obtained by considering the reactions in $\sR_{1,K}=\{3, 4, 8\}$. This leads to   
\[
A_K \, e = \nu_3\,k_3\,e_3 +\nu_4 \, k_4 \, e_4 + \nu_5 \, k_5 \, e_2 + \nu_8 \, k_8 \, e_5,
\]
for $e \in K$. 
Moreover, letting $B_K(x) \, e$ be the part 
of $F_K(x,e)$ that is linear in $e$ for $e \in K$, then by considering the reactions in $\sR_{2,K}$ we obtain 
\[
B_K(x) \, e = A_K \, e + \nu_6 \, k_6 \, x_1 e_4 + \nu_7 \, k_7 \, x_1 e_5.
\]
Thus, for $e \in K$, we can write $B_K(x) e = A_K \, e + x_1 \, B_{1,K} \, e$ where 
\[
B_{1,K} \, e = \nu_6 \, k_6 \, e_4 + \nu_7 \, k_7 \,  e_5.
\]
From the definition of $\Bar{B}_K$ it follows that 
$\Bar{B}_K(x) = \Bar{A}_K + x_1 \, \Bar{B}_{1,K}$ 
where $\Bar{B}_{1,K} = \Pi_K \, B_{1,K}$ and $\Bar{A}_K = \Pi_K A_K$. In order to obtain the matrix representations 
of $\Bar{A}_K$ and $\Bar{B}_{1,K}$ with respect to the basis $\{v_1, v_2, v_3\}$ for $K$, we note the following calculations:
\[
\begin{aligned}
A_K v_1 &= -k_3 v_1 + k_3 v_2, \quad A_K v_2 = -k_4 v_2 + k_4 v_5,\quad 
A_K v_3 = -k_8 v_3,\\
B_K v_1 &= 0, \quad
B_K v_2 = -k_6 v_2 + k_6 v_3 - k_6 v_4, \quad
B_K v_3 = -k_7 v_3 - k_7 v_4.
\end{aligned}
\]

Then we get the matrix representations w.r.t.\ the basis $\{v_1, v_2, v_3\}$ for $K$: 
		\[
		\bar{A}_{K} = \begin{pmatrix}
			-k_3 & 0 & 0 \\
			k_3 & -k_4 & 0 \\
			0 & 0 & -k_8
		\end{pmatrix}, \ \
		\bar{B}_{1,K} = \begin{pmatrix}
			0 & 0 & 0 \\
			0 & -k_6 & 0 \\
			0 & k_6 & -k_7
		\end{pmatrix}.
		\]

        As we assume the solution stays in a compact set $\Gamma$, we can say that 
        $0 \leq x_1 \leq u$, where $u$ is the upper bound for $x_1(t)$. So, $\bar{B}_K(x) $ stays in the convex combination of $\bar{A}_{K}$ and $(\bar{A}_{K}+u\bar{B}_{1,K})$.
		$\bar{A}_{K}$ and $(\bar{A}_{K}+u\bar{B}_{1,K})$ are Hurwitz and lower triangular matrices. So, they have solvable Lie algebra \cite{humphreys1978}. So, the system of Lyapunov inequalities stated in equation \ref{eq-Lyap-ineq-common} with $V_1 = \bar{A}_{K}$ and $V_2 = (\bar{A}_{K}+u\bar{B}_{1,K})$ has a common symmetric positive definite solution $N$. Hence the conditions of Proposition \ref{prop2} are satisfied and our required design goal is attained.

\paragraph{Example 4:}\mbox
		This system consists of three species $X_1$, $X_2$, and $X_3$ involved in six reactions given below:
		\begin{align*}
			& X_1 + X_2 \xrightarrow{k_1} X_1+X_3  \\
			& X_1 + X_3 \xrightarrow{k_2} 2X_1  \\
			& 2X_1 \xrightarrow{k_3} X_1 + X_2  \\
			& X_1  \xrightarrow{k_4} X_2  \\
			& X_2 \xrightarrow{k_5} X_3  \\
            & X_3  \xrightarrow{k_6} X_1
		\end{align*}

   where $k_j$ denotes the rate constant for $j$th reaction in this reaction network and $k_j>0$ for $j\in\{1,2,...,6\}$. Here the
stoichiometric vectors $(\nu_j)$ and rate functions $(a_j(x))$ using the mass action kinetics are
             \begin{align*}
			\nu_1 &= \begin{pmatrix} 0 \\ -1 \\ 1  \end{pmatrix}, &
			\nu_2 &= \begin{pmatrix} 1 \\ 0 \\ -1  \end{pmatrix}, &
			\nu_3 &= \begin{pmatrix} -1 \\ 1 \\ 0  \end{pmatrix}, &
			\nu_4 &= \begin{pmatrix} -1 \\ 1 \\ 0 \end{pmatrix}, \\
			\nu_5 &= \begin{pmatrix} 0 \\ -1 \\ 1  \end{pmatrix}, &
			\nu_6 &= \begin{pmatrix} 1 \\ 0 \\ -1 \end{pmatrix},\\
		  a_1 &= k_1 x_1 x_2, & a_2 &= k_2 x_1 x_3, & a_3 &= k_3 x_1^2, & a_4 &= k_4 x_1, \\
			a_5 &= k_5 x_2, & a_6 &= k_6 x_3
		\end{align*}
         where $x_i(\geq 0)$ is the concentration of species $X_i$ with $i\in\{1,2,3\}$.    
Therefore, by mass-action kinetics, the system of differential equations becomes
\begin{align*}
    \dot{x}_1 &=   k_2  x_1  x_3 - k_3  x_1^2 - k_4  x_1 + k_6  x_3 ,\\
    \dot{x}_2 &=  - k_1  x_1  x_2  - k_5  x_2 + k_3  x_1^2 + k_4  x_1,\\
    \dot{x}_3 &=  k_1  x_1  x_2 - k_2  x_1  x_3 + k_5  x_2 - k_6  x_3 .
\end{align*}
Here, $x_i \in \mathbb{R}^+$ for each $i\in \{1,2,3\}$. Since $[1 \; 1\; 1] \, \nu_j=0$ for all $j$, $\dot{x}_1 + \dot{x}_2 + \dot{x}_3 = 0$. So, we have
\[ x_1 + x_2 + x_3 = \text{constant} = x_1(0) + x_2(0) + x_3(0), \]
and thus the solution lies in a compact set since $x_i(t) \geq 0$. 

Suppose we observe $x_1(t)$ i.e., the concentration of species $X_1$. Then we have

		\[
			K = \text{span} \left\{ \begin{pmatrix} 0 \\ 1 \\ 0  \end{pmatrix}, \begin{pmatrix} 0 \\ 0 \\ 1 \end{pmatrix} \right\}:=\{v_1, v_2\}  
        \]
     where $v_1$ and $v_2$ are defined and $S_{2,K}=\{0\}$ since $|\nu_{j,u}^+| \leq 1$  for all reactions $j$. 
We also note that $\sR_{0,K}=\{3, 4\}$ , $\sR_{1,K} = \{5, 6\}$ and $\sR_{2,K} = \{1, 2, 5, 6\}$. 
Since $S_{2,K}$ is trivial, there is freedom in choosing $E$ subject to $E \cap K=\{0\}$. We explore two choices. 

\paragraph{Observer 1} We choose
        \[
			E = \text{span} \left\{ \begin{pmatrix} 1 \\ 0 \\ 0 \end{pmatrix} \right\}:=\{v_3\}
		\]
	where $v_3$ is defined.	Since $E = K^\perp$, this choice is the same as the usual nudging. 
    Considering $A_K \, e$ as the linear part of $F_K(x,e)$, we can get  $A_K \, e$ by using the reactions in $\sR_{1,K}=\{  5, 6\}$. This gives   
\[
A_K \, e = \nu_5\,k_5\,e_2 +\nu_6 \, k_6 \, e_3, 
\]
for $e \in K$. 
Moreover, considering $B_K(x) \, e$ as the part 
of $F_K(x,e)$ that is linear in $e$ for $e \in K$, then using the reactions in $\sR_{2,K}$ we obtain 
\[
B_K(x) \, e = A_K \, e + \nu_1 \, k_1 \, x_1 e_2 + \nu_2 \, k_2 \, x_1 e_3.
\]
Thus, for $e \in K$, we can write $B_K(x) e = A_K \, e + x_1 \, B_{1,K} \, e$ where 
\[
B_{1,K} \, e = \nu_1 \, k_1 \, e_2 + \nu_2 \, k_2 \,  e_3.
\]
From the definition of $\Bar{B}_K$ it follows that 
$\Bar{B}_K(x) = \Bar{A}_K + x_1 \, \Bar{B}_{1,K}$ 
where $\Bar{B}_{1,K} = \Pi_K \, B_{1,K}$ and $\Bar{A}_K = \Pi_K A_K$. In order to obtain the matrix representations 
of $\Bar{A}_K$ and $\Bar{B}_{1,K}$ with respect to the basis $\{v_1, v_2\}$ for $K$, we note the following calculations:
\[
\begin{aligned}
A_K v_1 &= -k_5 v_1 + k_5 v_2, \quad A_K v_2 = -k_6 v_2 + k_6 v_3,\\ 
B_K v_1 &= -k_1 v_1 + k_1 v_2, \quad
B_K v_2 = -k_2 v_2 + k_2 v_3.
\end{aligned}
\]
So, we get the matrix representations 
\[
			\bar{A}_{K} = \begin{pmatrix} -k_5 & 0 \\ k_5 & -k_6 \end{pmatrix} , \quad 
			\bar{B}_{1,K} = \begin{pmatrix} -k_1 & 0 \\ k_1 & -k_2 \end{pmatrix}.
\]
Let the solution stay in a compact set $\Gamma$. Then we can say that $0 \leq  x_1 \leq u$, where $u$ is the upper bound for $x_1$. So, $\bar{B}_K(x) $ stays in the convex combination of $\bar{A}_{K}$ and $(\bar{A}_{K}+u\bar{B}_{1,K})$.
		$\bar{A}_{K}$ and $(\bar{A}_{K}+u\bar{B}_{1,K})$ are Hurwitz and lower triangular matrices. Hence they generate a solvable Lie algebra \cite{humphreys1978}. Therefore the system of Lyapunov inequalities stated in equation \ref{eq-Lyap-ineq-common} with $V_1 = \bar{A}_{K}$ and $V_2 = (\bar{A}_{K}+u\bar{B}_{1,K})$ has a common symmetric positive definite solution $N$. Hence the conditions of Proposition \ref{prop2} are satisfied and our required design goal is attained.

\paragraph{Observer 2} Next, we choose
        \[
			E = \text{span} \left\{ \begin{pmatrix} 1 \\ 0 \\ 1 \end{pmatrix} \right\}:={v_3'}
		\]
        where $v_3'$ is defined.

As the subspace $E$ is changed, the calculations for obtaining $\bar{A}_K$ and $\bar{B}_{1,K}$ with respect to the basis $\{v_1, v_2\}$ for $K$ will change. We show 
the computations here:
\[
\begin{aligned}
A_K v_1 &= -k_5 v_1 + k_5 v_2, \quad A_K v_2 = -2k_6 v_2 + k_6 v_3',\\
B_K v_1 &= -k_1 v_1 + k_1 v_2, \quad
B_K v_2 = -2k_2 v_2 + k_2 v_3'.
\end{aligned}
\]

        Then we get the matrix representations 
		\[
			\bar{A}_{K} = \begin{pmatrix} -k_5 & 0 \\ k_5 & -2k_6 \end{pmatrix} ,\ \ 
			\bar{B}_{1K} = \begin{pmatrix} -k_1 & 0 \\ k_1 & -2k_2 \end{pmatrix}. \\
		\]
	Given a compact set $\Gamma$ in which the solution lies, there exists $u>0$ such that $0 \leq x_1 \leq u$.  Thus, $\bar{B}_K(x) $ stays in the convex combination of $\bar{A}_{K}$ and $(\bar{A}_{K}+u\bar{B}_{1,K})$.
		$\bar{A}_{K}$ and $(\bar{A}_{K}+u\bar{B}_{1,K})$ are Hurwitz and lower triangular matrices. Hence they generate a solvable Lie algebra \cite{humphreys1978}. Thus, the system of Lyapunov inequalities stated in equation \ref{eq-Lyap-ineq-common} with $V_1 = \bar{A}_{K}$ and $V_2 = (\bar{A}_{K}+u\bar{B}_{1,K})$ has a common symmetric positive definite solution $N$. Hence the conditions of Proposition \ref{prop2} are satisfied and our required design goal is met.

\section{Numerical results}\label{sec_num_res}
In this section we demonstrate the efficiency of our proposed framework through numerical simulations of the examples discussed in sections \ref{sec-prop1-eg} and \ref{sec-prop2-eg}. These are performed in MATLAB with ODE solver {\tt ode45}. 
In Section \ref{sec-noisy}, we include simulations of the application of our proposed observer to the case of noisy observations of Example 2, the chaotic WR model. We compare the performance of our observer with that of a particle filter for state estimation. 

\begin{remark}\label{rem-num-acc}
When the time trajectory of the norm of the observer error $|e(t)|$ is plotted, the error typically decays down to very small values and then 
fluctuates around a constant value. When these fluctuations are at values below $10^{-12}$ (often below $10^{-15}$) we take it as an indication that the error is within the numerical accuracy of the solver. In some examples, the error becomes numerically zero after some time point and remains zero. In the log scale plots the decay of the error is seen more clearly 
and the plots end abruptly if the error reaches (numerical) zero. 
\end{remark}

\subsection{Simulation for Example $1$}
Here we consider Lotka-Volterra model with rate constants $k_1= 0.3,k_2=1,k_3=1$ where $k_j$ is the rate constant for $j$th reaction.
\begin{figure}[htbp]
     \centering
     \includegraphics[width=0.4\linewidth]{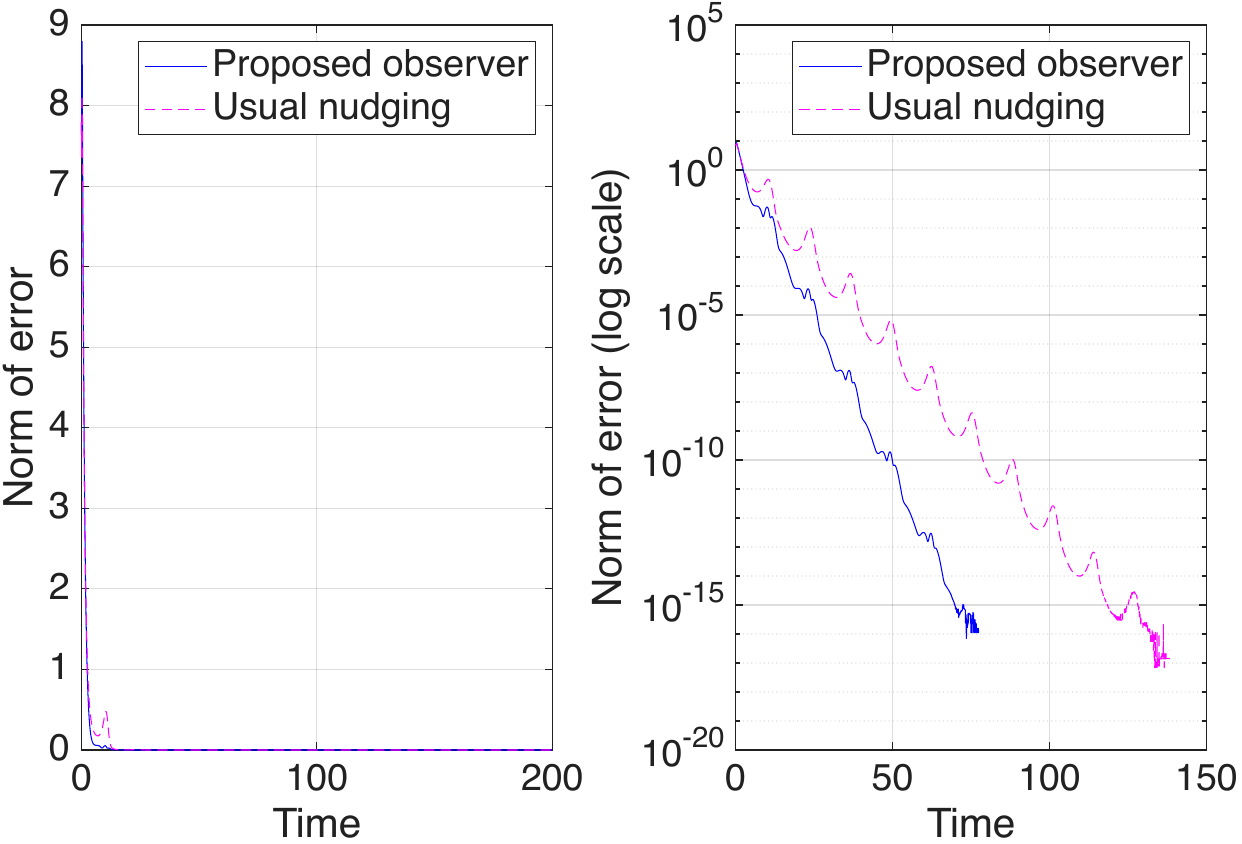}
     \caption{Comparison between the errors of the proposed observer and the usual nudging with $ x_0=[0.5, 0.7]^T, \ z_0 = [7, 5]^T $ and $\mu = 1$: Lotka-Volterra model.
     In the log scale plot, the error of the proposed observer is not shown after some time point because it is zero (numerically).}
     \label{lv1}
\end{figure}
\begin{figure}[htbp]
     \centering
     \includegraphics[width=0.4\linewidth]{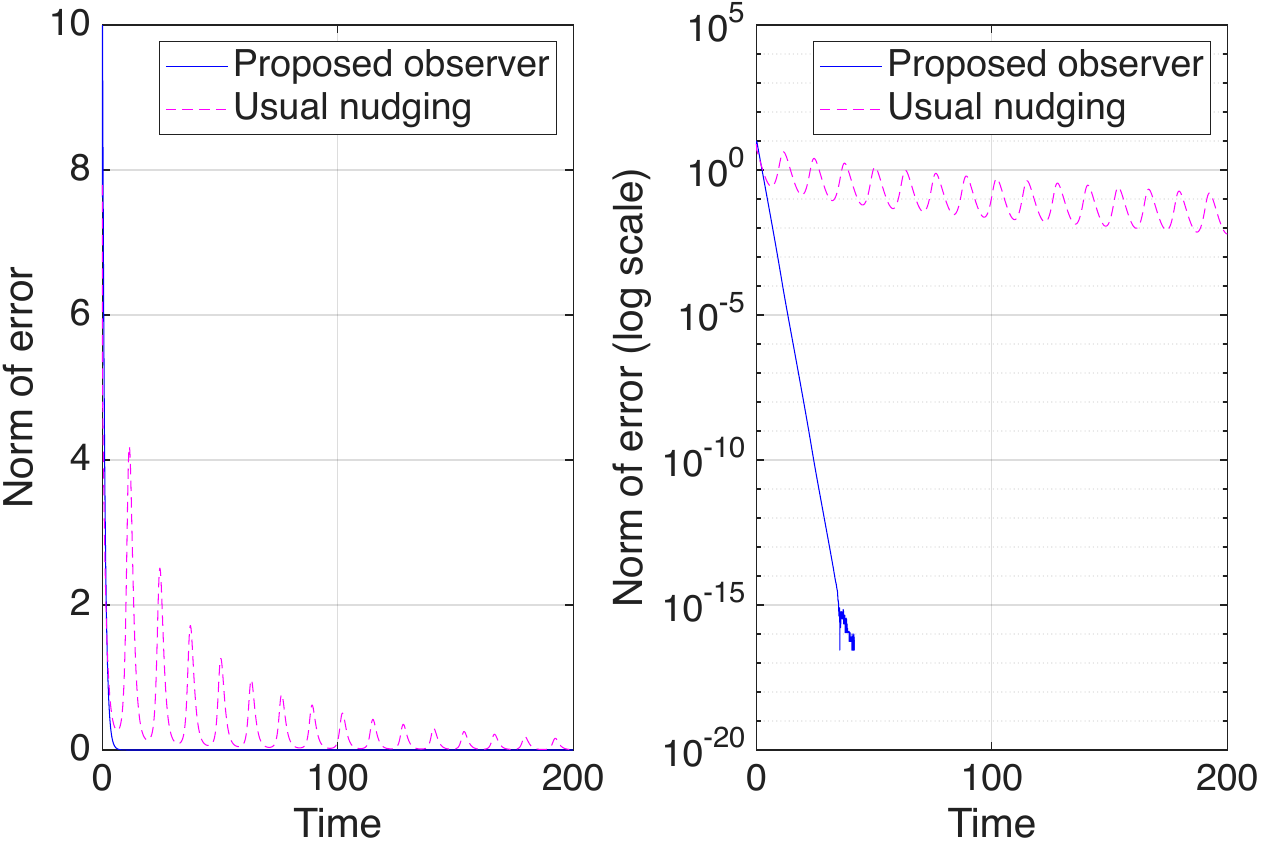}
     \caption{Comparison between the errors of the proposed observer and the usual nudging with $ x_0=[0.5, 0.7]^T, \ z_0 = [7, 5]^T $ and $\mu = 30$: Lotka-Volterra model.
     In the log scale plot, we note (numerical) zero values 
     of the error norm are not shown, abruptly ending the plot.}
     \label{lv2}
\end{figure}
\begin{figure}[htbp]
     \centering
     \includegraphics[width=0.4\linewidth]{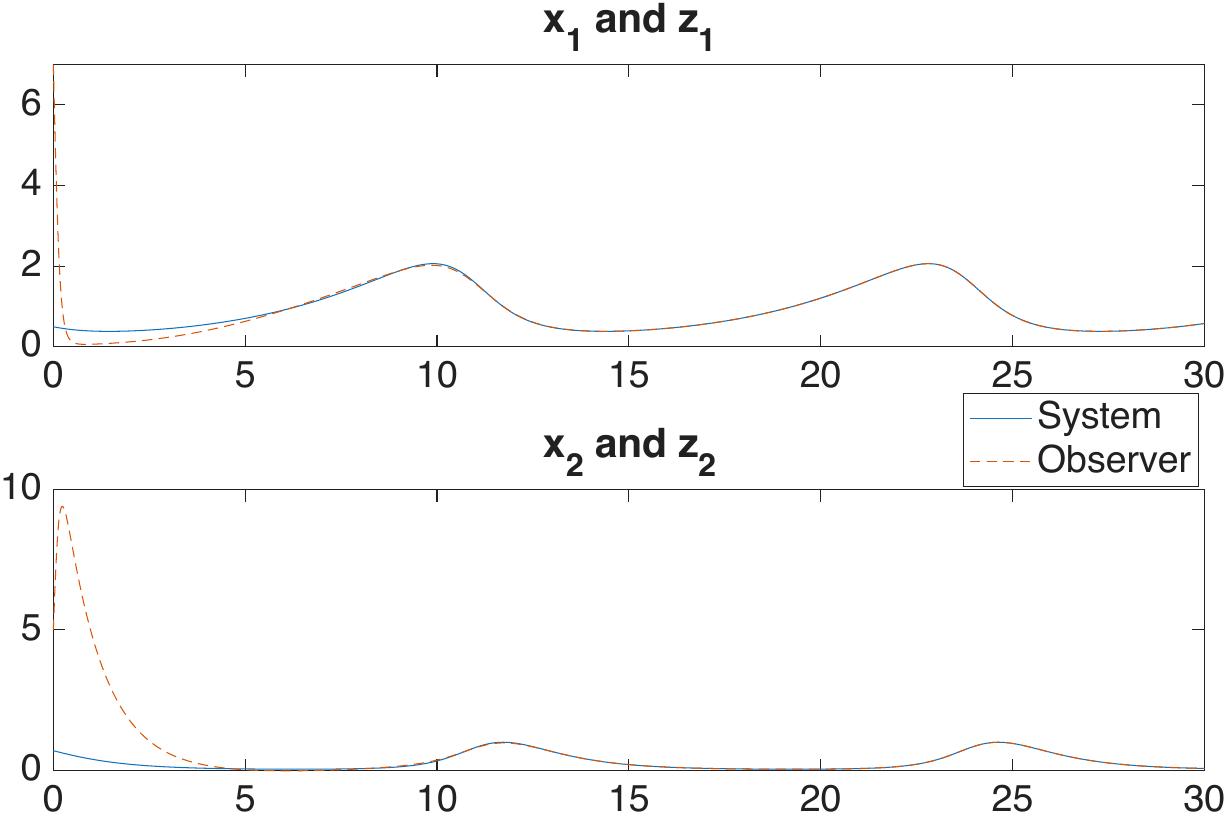}
     \caption{Comparison between each component of state($x$)and observer($z$) $ x_0=[0.5, 0.7]^T, \ z_0 = [7, 5]^T $ and $\mu = 1$: Lotka-Volterra model with proposed observer. }
     \label{lv3}
\end{figure}

Figures \ref{lv1} and \ref{lv2} compare the convergence of the error (in Euclidean norm) of both the proposed observer and the usual nudging for the parameter values $\mu=1$ and $\mu=30$. Both methods achieve convergence with $\mu=1$ and $\mu=30$. But we can see that the proposed observer attains a faster exponential decay of the error. The plots in logarithmic scale show this more clearly. 
More importantly, we note that the usual nudging observer performs poorly for the larger parameter 
value $\mu=30$ making the usual nudging observer unreliable.  
 Figure \ref{lv3} shows the components of the trajectories of the proposed observer ($\mu=1$) and the system. We can see that the estimated trajectories $z_1$ and $z_2$ capture the oscillatory system dynamics $x_1$ and $x_2$ quickly even with large initial error.

 We also note that the exponential decay rate of the error for the proposed observer with $\mu=30$ may be roughly estimated from Figure \ref{lv2} to be around $1$. 
 The lower bound for $\alpha_0$ from Proposition \ref{prop1} for this example is $k_3=1$
as mentioned earlier. For large $\mu$ the decay rate is expected to be $\alpha_0$ or greater, and this is consistent with the theory. 

\subsection{Simulation for Example $2$}
In this example we apply the proposed observer to the chaotic Willamowski-Rössler model with parameters (rate constants) taken from \cite{gaspard2005rossler}:
$   
    k_1 = 30,
    k_2 = 0.5,
    k_3 = 1,
    k_4 = 0.001,
    k_5 = 10,
    k_6 = 0.001,
    k_7 = 1,
    k_8 = 0.001,
    k_9 = 16.5 \ \ \text{and}\ \ 
    k_{10} = 0.5
$.

\begin{figure}[htbp]
     \centering
     \includegraphics[width=0.5\linewidth]{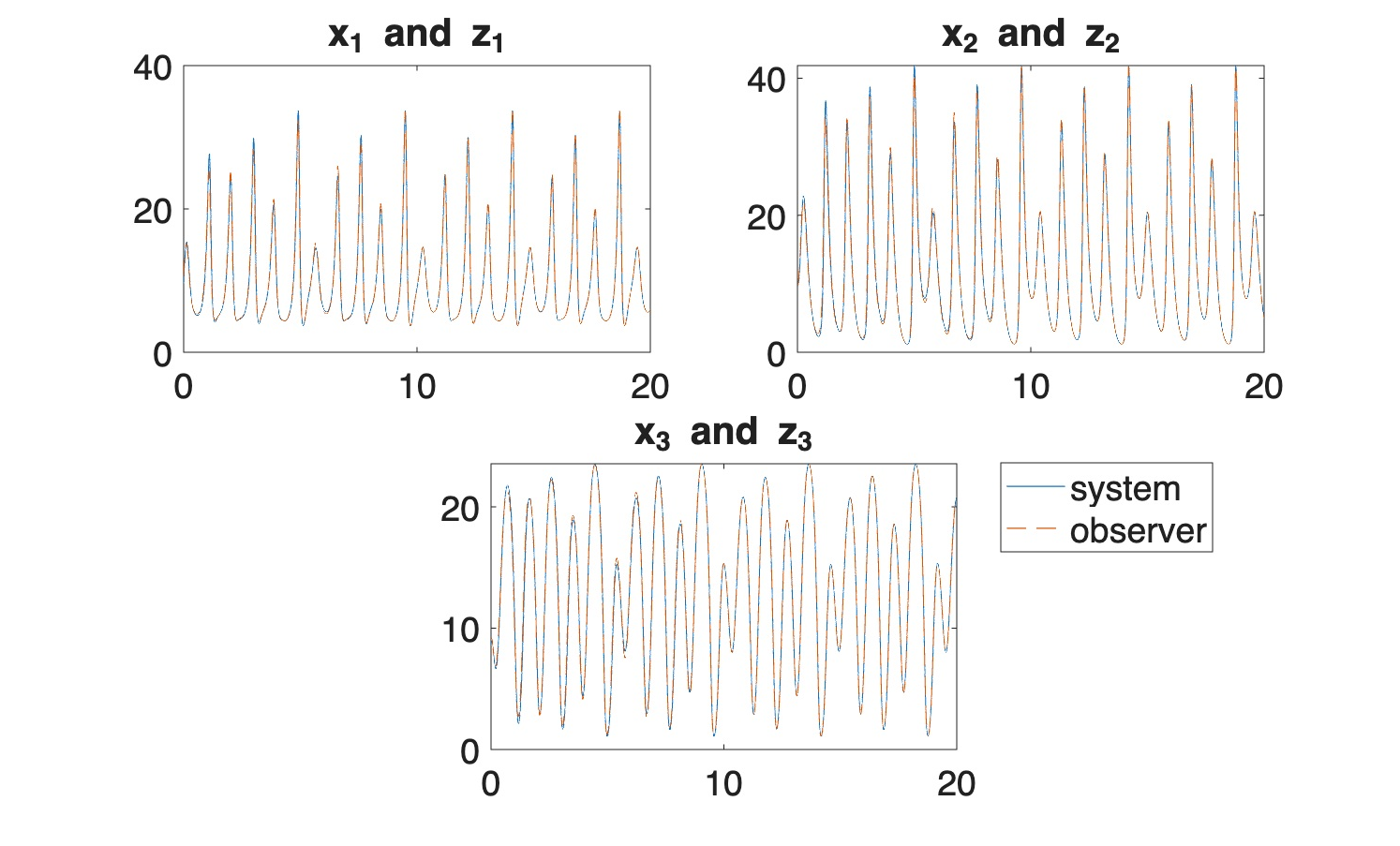}
     \caption{Comparison between each component of state ($x$) and observer ($z$) with $ x_0=[9.61, 9.66, 8.55]^T, \ z_0 = [10.53, 9.70, 9.04]^T $ and $\mu = 1$ : WR model with proposed observer.}
     \label{wr_each species}
\end{figure}

\begin{figure}[htbp]
     \centering
     \includegraphics[width=0.4\linewidth]{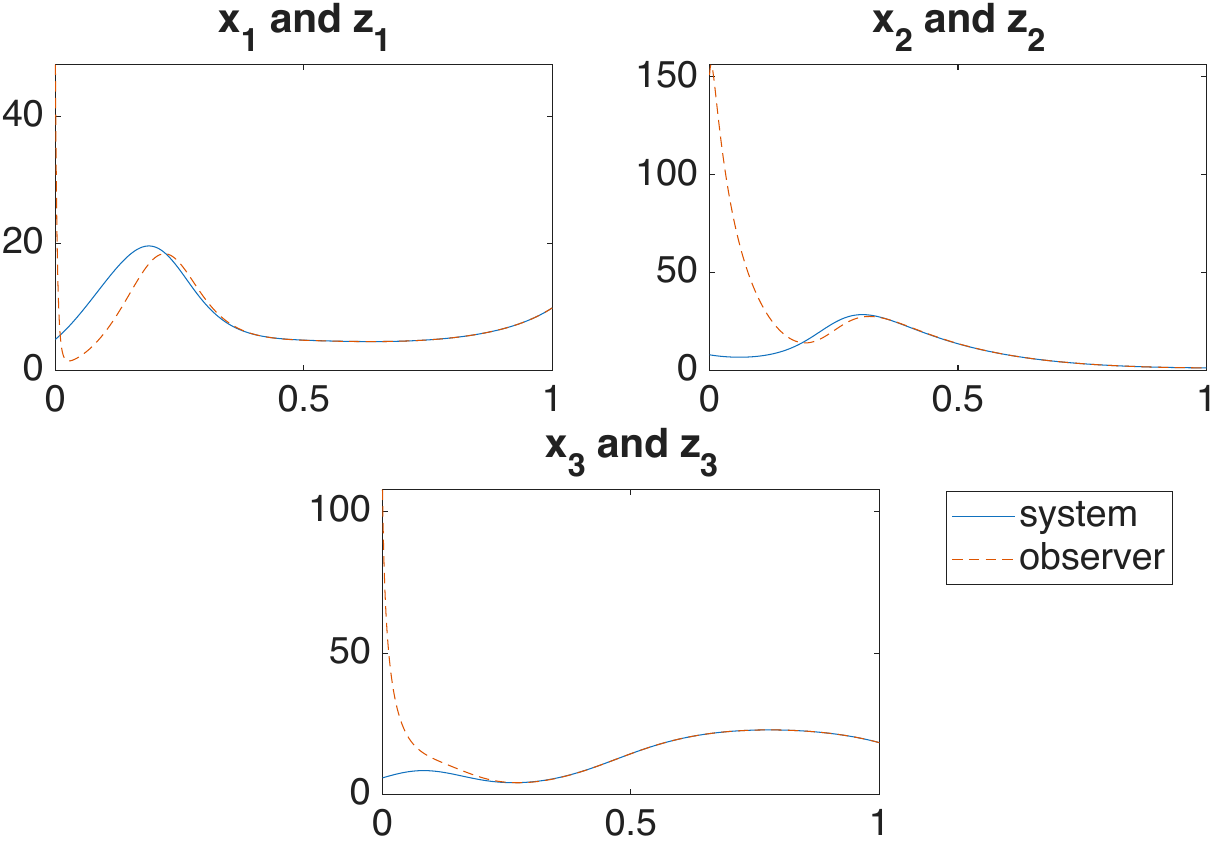}
     \caption{ Comparison between each component of system and observer $x_0=[4.95, 8.17, 5.97]^T, \ z_0=[48.18, 150.97, 107.79]^T$ and $\mu=30 $ : WR model with proposed observer}
     \label{wr_each species_large error}
\end{figure}

\begin{figure}[htbp]
     \centering
     \begin{subfigure}[b]{0.35\textwidth}
     \includegraphics[width=\textwidth]{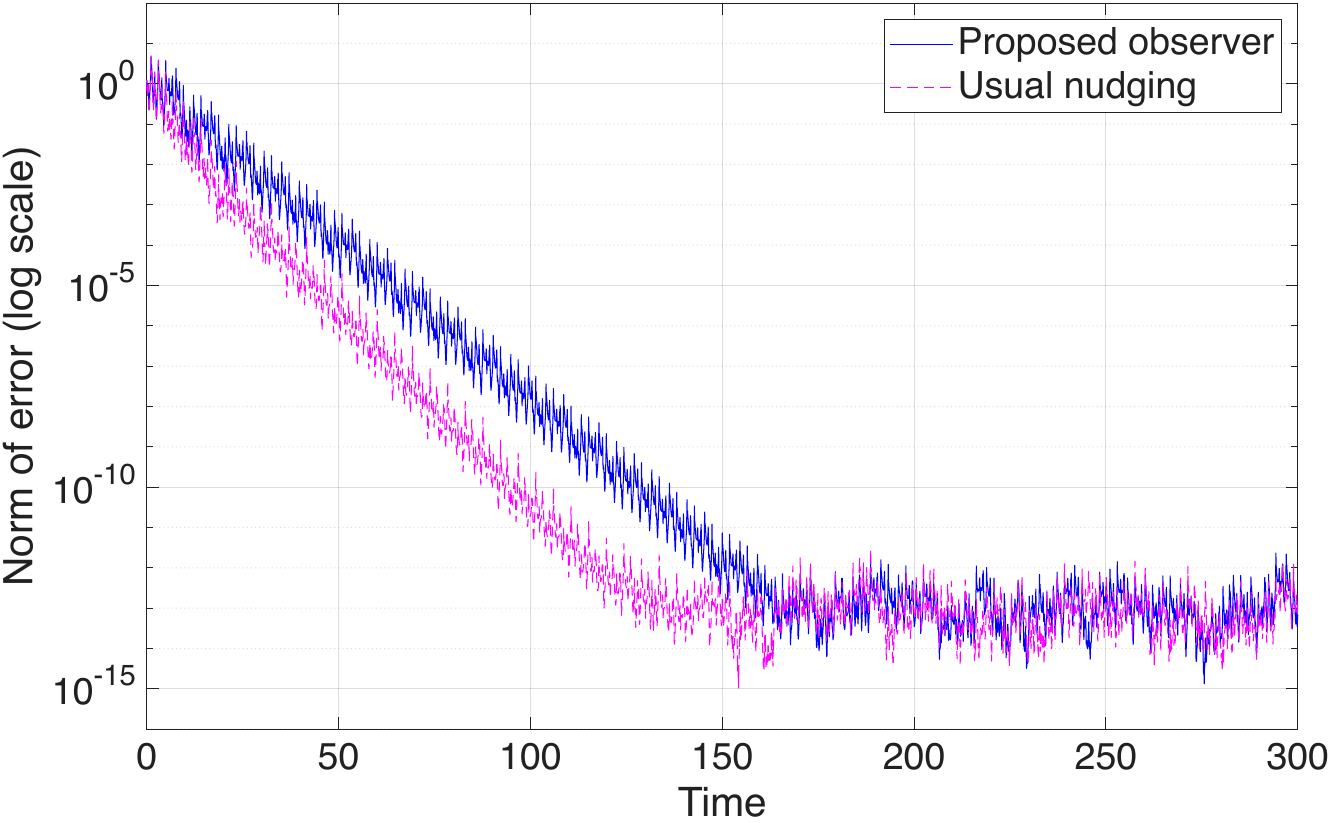}
     \caption{ $\mu=1 $  }
     \label{wr_mu1}
     \end{subfigure}
     \hspace{2cm}
    \begin{subfigure}[b]{0.35\textwidth}
     \includegraphics[width=\textwidth]{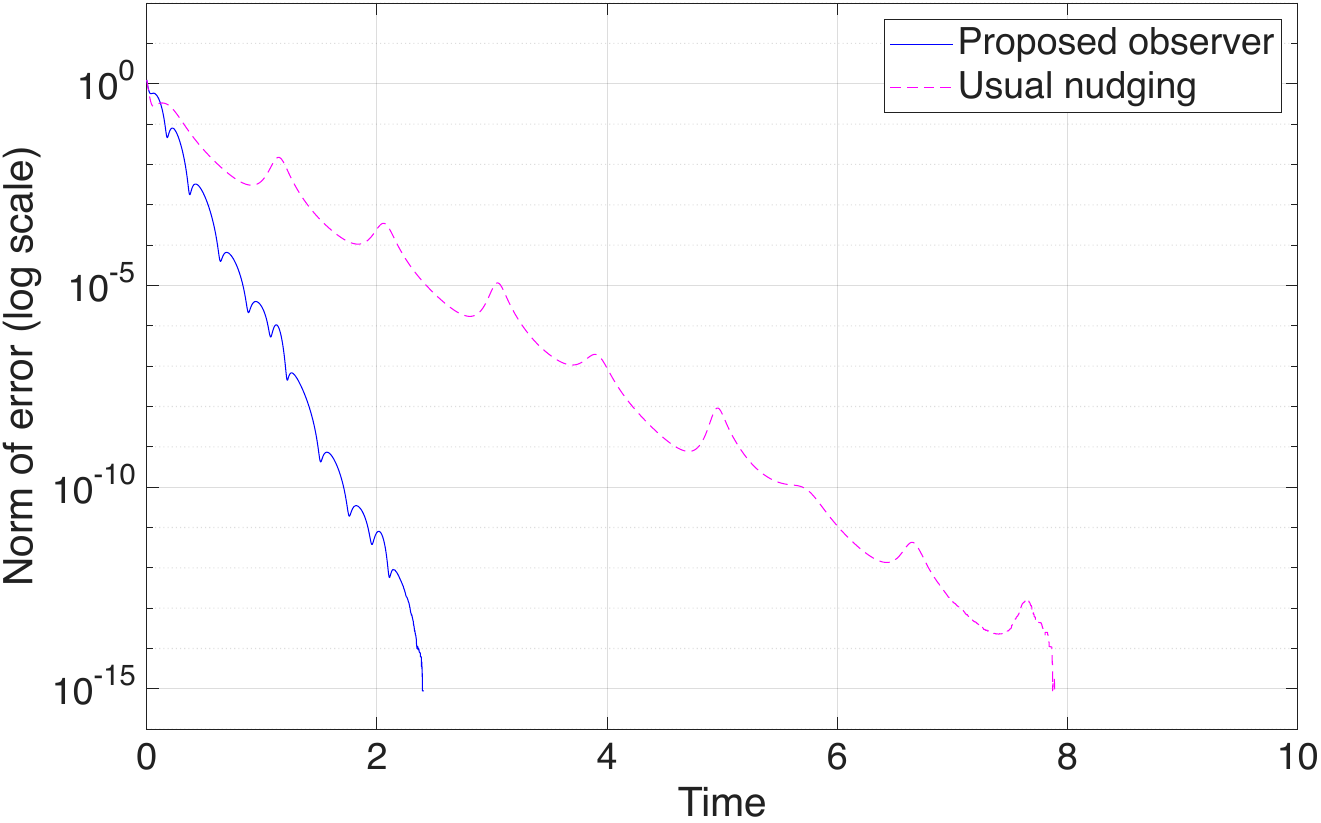}
     \caption{ $\mu=30 $  }
     \label{wr_err_mu30}
     \end{subfigure}
     \caption{ Comparison between norm of error with proposed observer and usual nudging in logarithmic scale using $x_0=[9.61, 9.66, 8.55]^T, \ z_0=[10.53, 9.70, 9.04]^T$ and $\mu=1 $ and $\mu=30 $  : WR model.}
     \label{wr_comp_log}
\end{figure}

\begin{figure}[htbp]
     \centering
     \begin{subfigure}[b]{0.35\textwidth}
     \includegraphics[width=\textwidth]{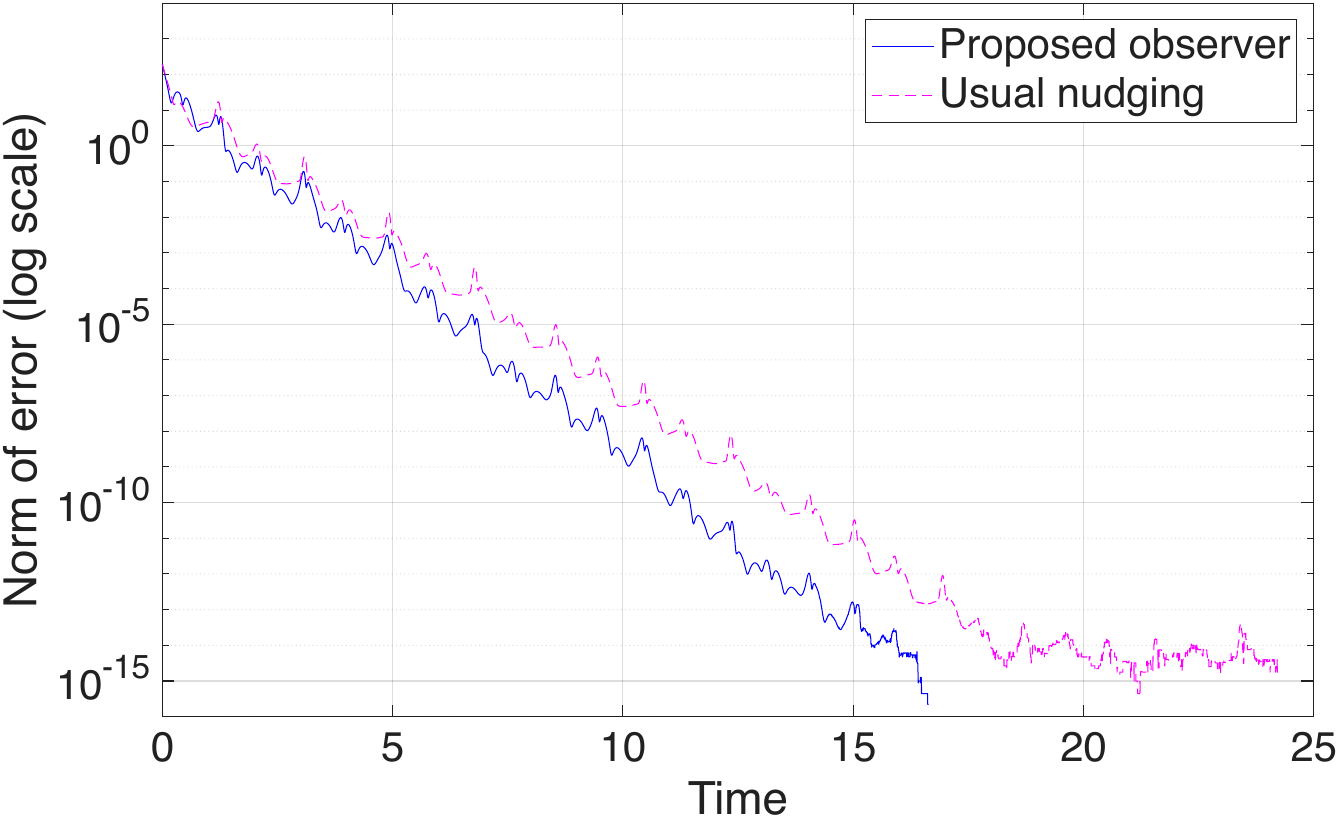}
     \caption{ $\mu=6 $  }
     \label{wr_mu6}
     \end{subfigure}
     \hspace{2cm}
    \begin{subfigure}[b]{0.35\textwidth}
     \includegraphics[width=\textwidth]{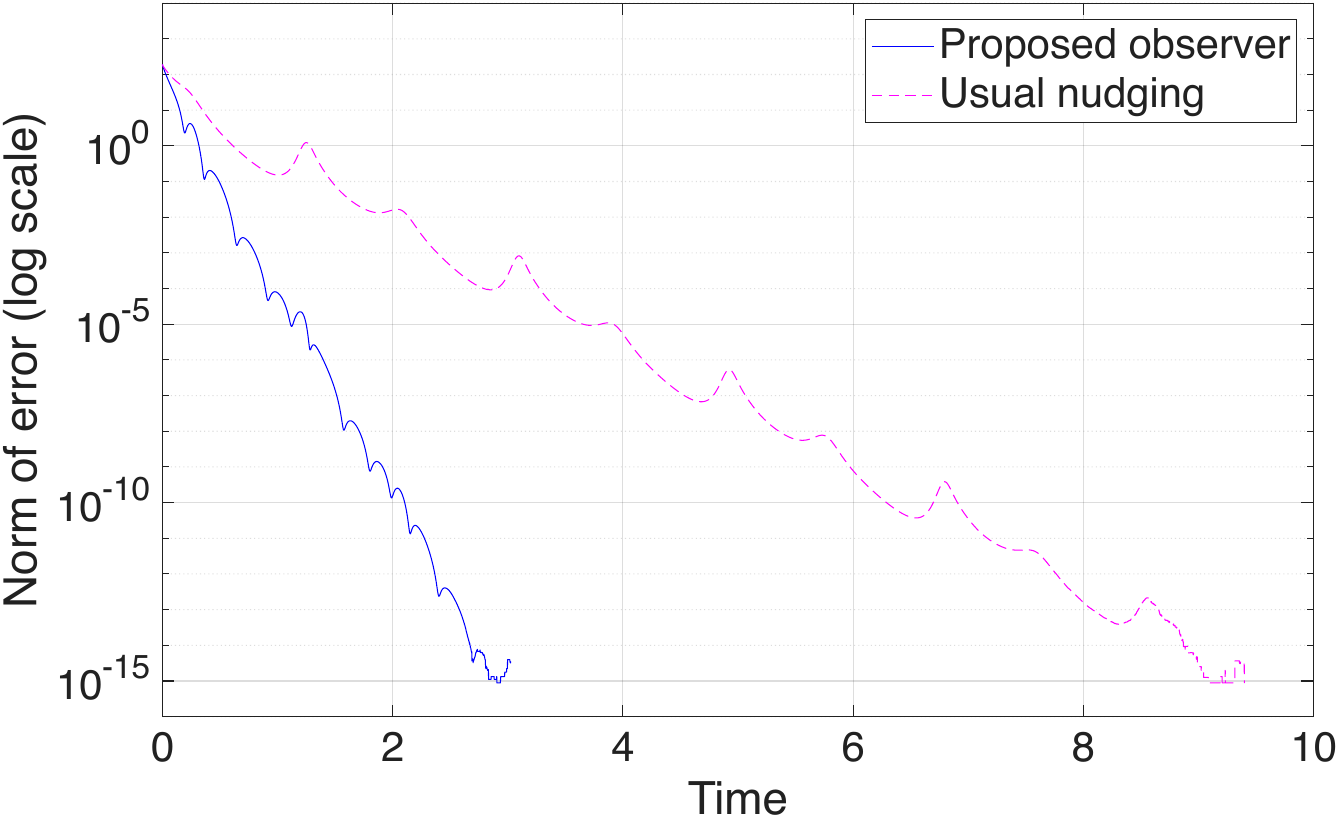}
     \caption{ $\mu=30 $  }
     \label{wr_large_err_mu30}
     \end{subfigure}
     \caption{ Comparison between norm of error with proposed observer and usual nudging in logarithmic scale using $x_0=[4.95, 8.17, 5.97]^T, \ z_0=[48.18, 150.97, 107.79]^T$ with $\mu=6$ and $\mu=30 $  : WR model.}
     \label{wr_comp_log_large error}
\end{figure}

Figure \ref{wr_each species} displays the state and observer trajectories 
when the initial error $z_0 - x_0$ is small. We see convergence with $\mu=1$. 
In Figure \ref{wr_each species_large error} a larger initial error is considered and this necessitated 
a larger $\mu$ (approximately $6$ or greater) and rapid convergence is seen in a short time scale with $\mu=30$.  
Figures \ref{wr_comp_log} and \ref{wr_comp_log_large error} show the errors (in log scale) of both the proposed observer and the usual nudging for two different parameter values of $\mu=1$ and $\mu=30$. For parameter $\mu$ values sufficiently large (about $\mu \geq 6$) the proposed observer converges faster even though both observers reach zero error numerically.  

From Figure \ref{wr_comp_log_large error} we can estimate the exponential decay rate 
for the proposed observer with $\mu = 30$ to be around $14$. The theoretical lower bound for $\alpha_0$ from Proposition \ref{prop1} is $k_5$ as discussed earlier 
and $k_5=10$. Thus, the results are consistent with the theory. 

\subsection{Simulation for Example $3$}
For the oscillator example, we use the parameter values
$
k_1 = 1, k_2 = 0.5, k_3 = 2, k_4 = 2, k_5 = 1, k_6 = 0.01, k_7 = 0.01, k_8 = 0.5.
$

\begin{figure}[htbp]
     \centering
     \includegraphics[width=0.4\linewidth]{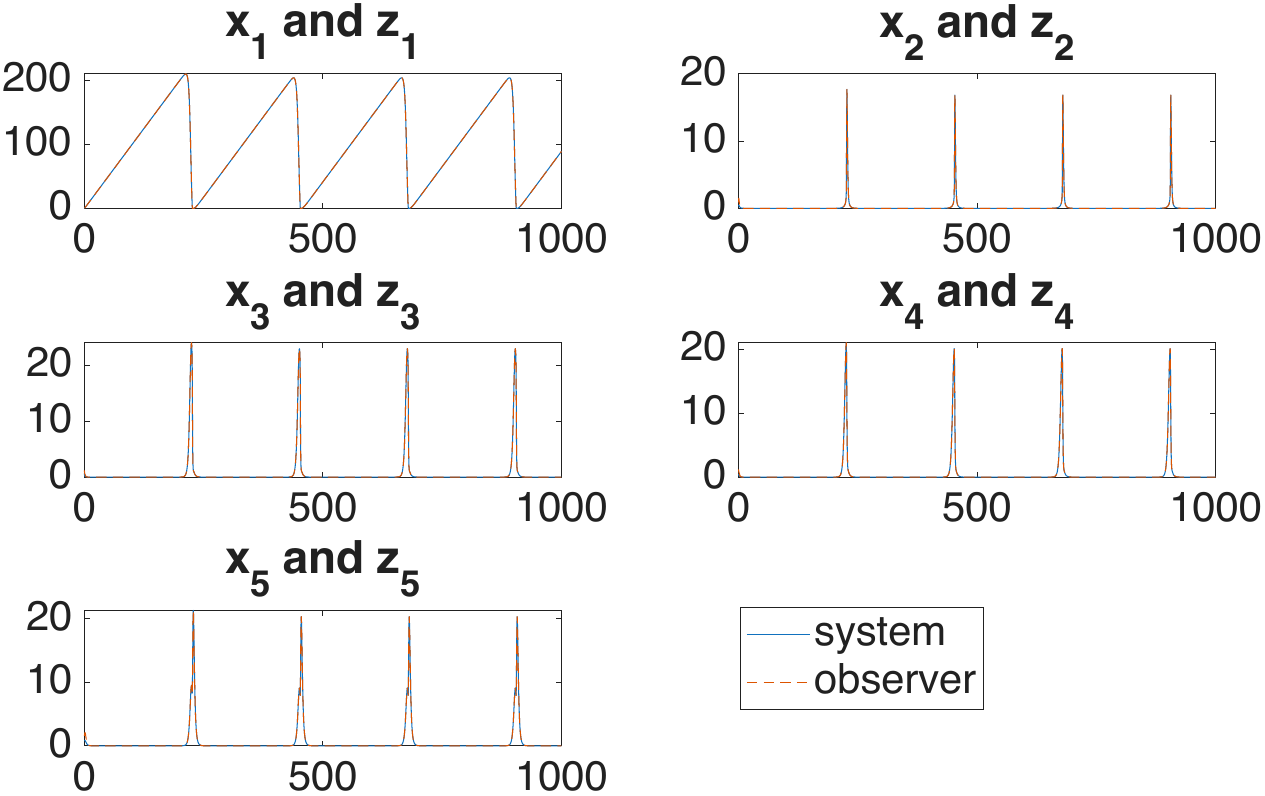}
     \caption{ Comparison between each component of system and observer with $ x_0 = [0.04, 0.30, 0.69, 0.28, 0.83]^T , \ z_0 = [0.62, 0.89, 1.20, 1.15, 1.57]^T $ and $\mu = 1$: Oscillator model with proposed observer 1 when $x_1$ and $x_3$ are observed.}
     \label{os_each species_long time span}
\end{figure}

\begin{figure}[htbp]
     \centering
     \includegraphics[width=0.4\linewidth]{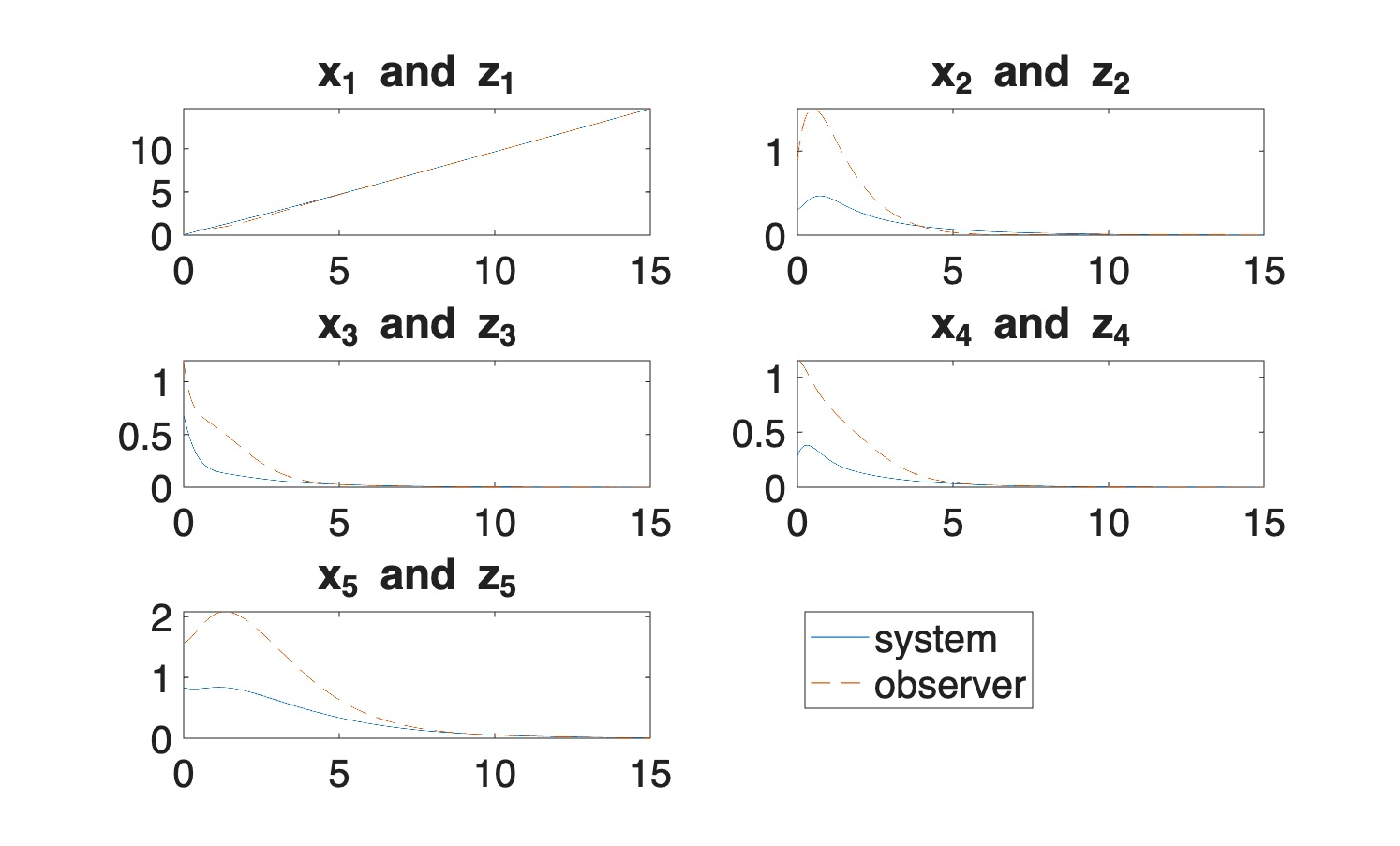}
     \caption{ Comparison between each component of system and observer in shorter time span with $ x_0 = [0.04, 0.30, 0.69, 0.28, 0.83]^T , \ z_0 = [0.62, 0.89, 1.20, 1.15, 1.57]^T $ and $\mu = 1$ : Oscillator model with proposed observer 1 when $x_1$ and $x_3$ are observed.}
     \label{os_each species_short time span}
\end{figure}

\begin{figure}[htbp]
     \centering
     
\begin{subfigure}[b]{0.35\textwidth}
     
     \includegraphics[width=\textwidth]{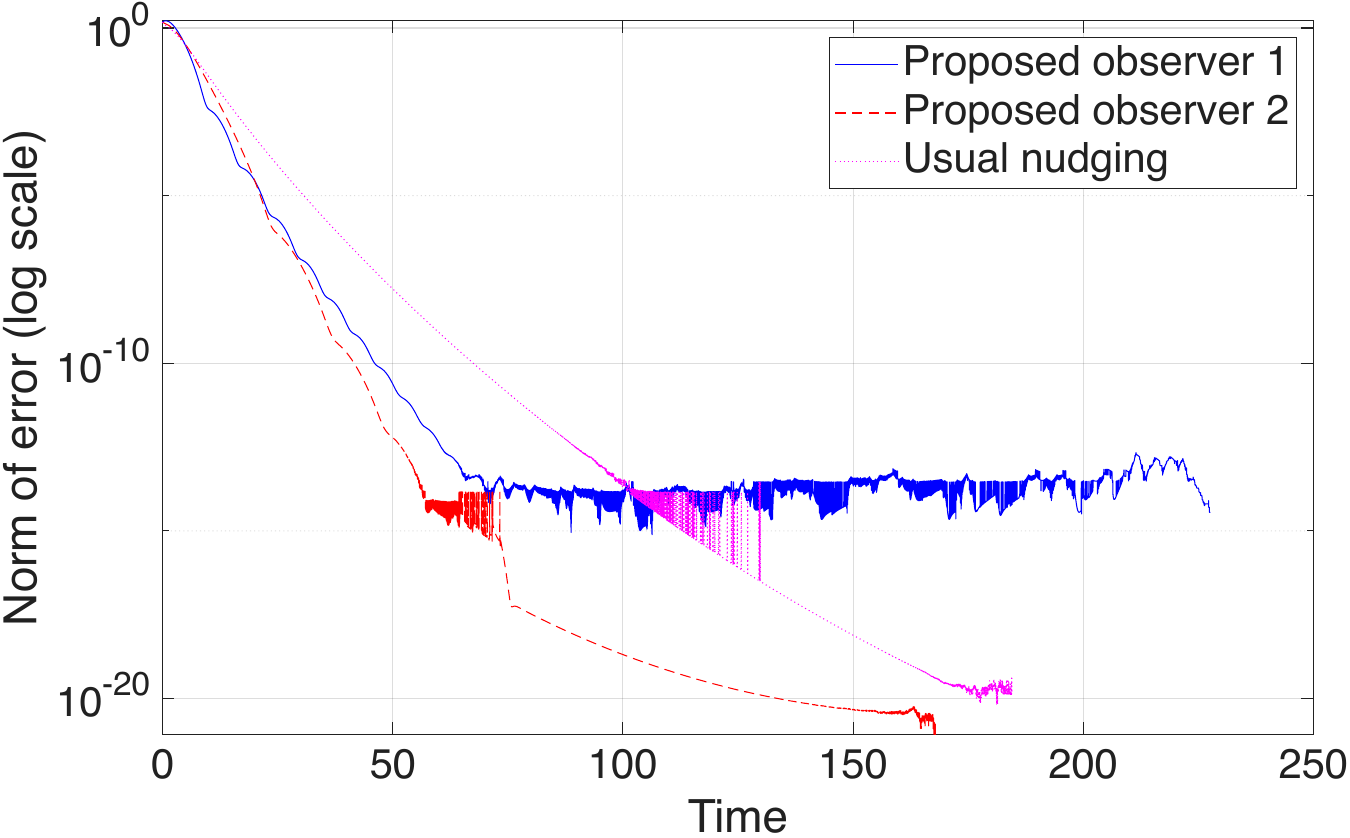}
     \caption{$\mu=1$}
     \label{osci_err_mu1}
\end{subfigure}
\hspace{2cm}
\begin{subfigure}[b]{0.35\textwidth}
     \includegraphics[width=\textwidth]{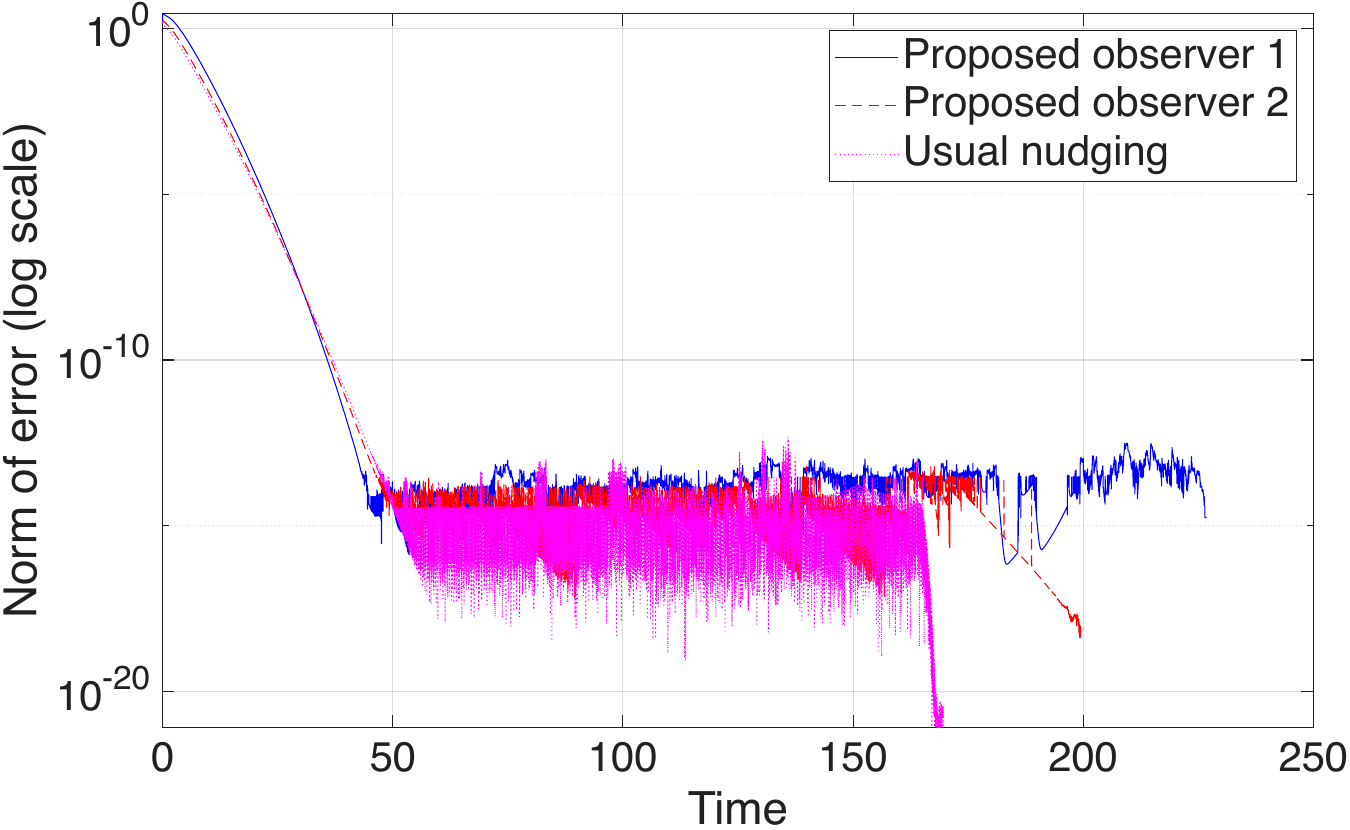}
     \caption{$\mu=30$}
     \label{osci_err_mu30}
\end{subfigure}
     \caption{Comparison between norm of error using proposed observers and usual nudging in logarithmic scale with $ x_0 = [0.04, 0.30, 0.69, 0.28, 0.83]^T , \ z_0 = [0.62, 0.89, 1.20, 1.15, 1.57]^T $ using  $\mu=1$ and $\mu=30$: Oscillator model where $x_1$ and $x_3$ are observed.}
     \label{osci_err}
\end{figure}


\begin{figure}[htbp]
     \centering
    \begin{subfigure}[b]{0.35\textwidth}
     \includegraphics[width=\textwidth]{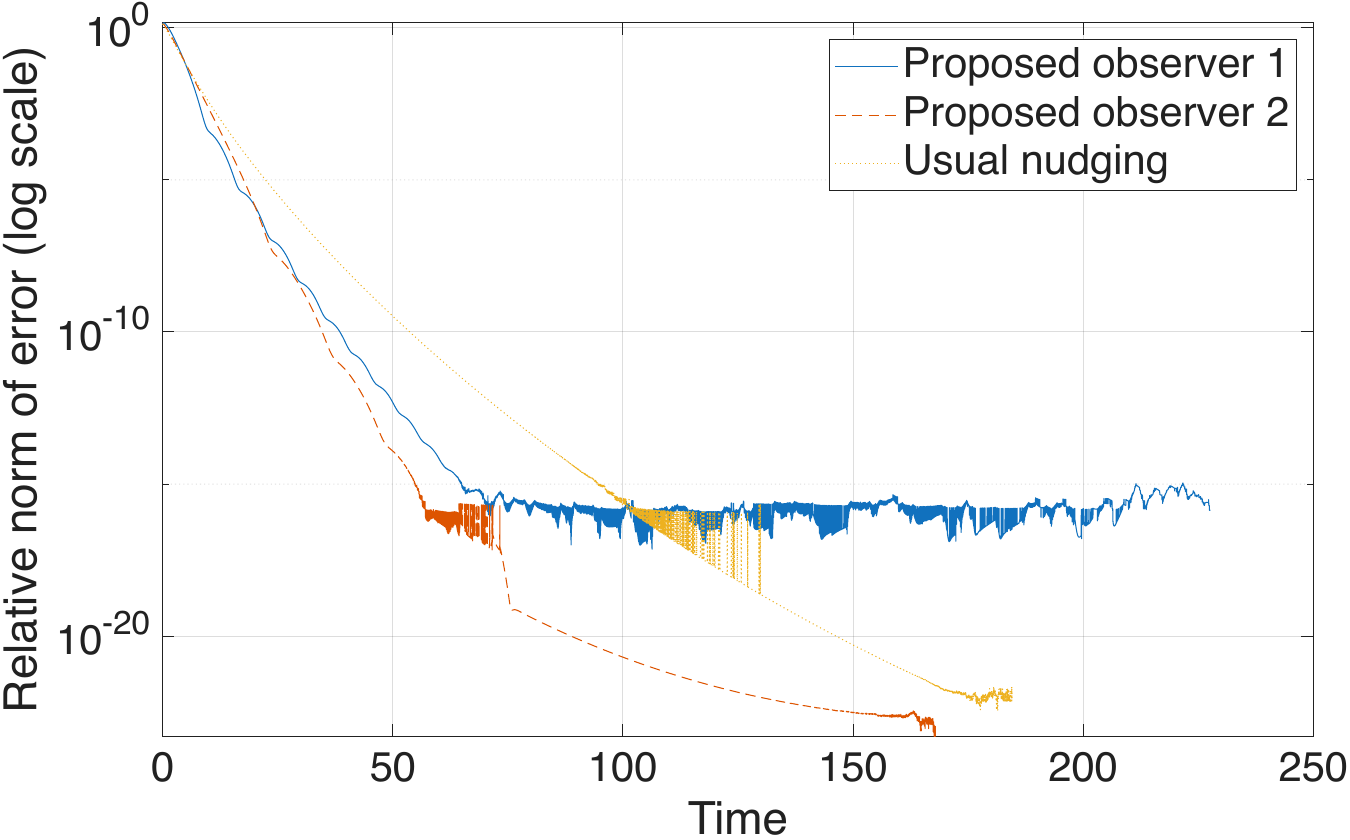}
     \caption{ $\mu=1 $  }
     \label{osci_rel_err_mu1}
     \end{subfigure}
     \hspace{2cm}
      \begin{subfigure}[b]{0.35\textwidth}
     \includegraphics[width=\textwidth]{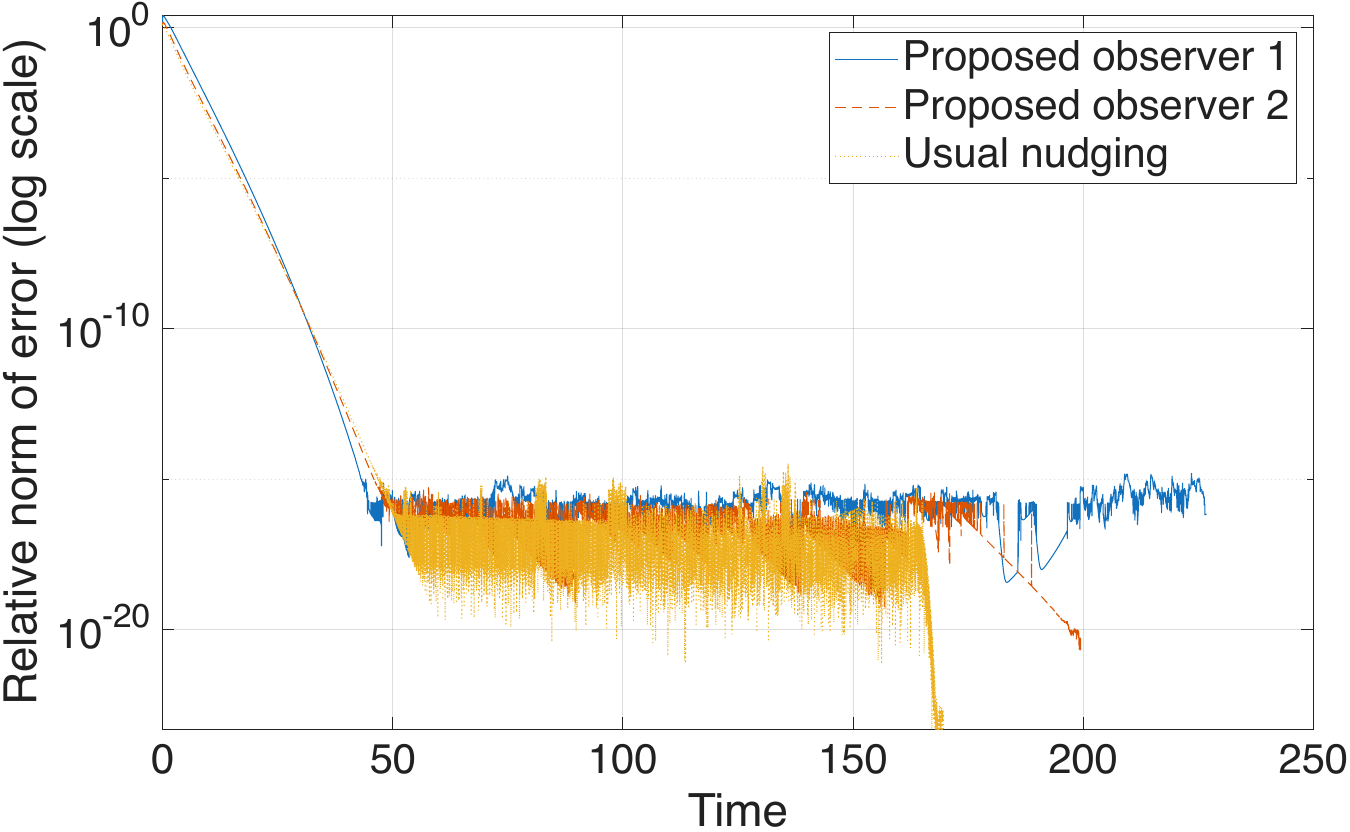}
     \caption{ $\mu=30 $  }
     \label{osci_rel_err_mu30}
     \end{subfigure}
     \caption{ Comparison between relative norm of error with proposed observers and usual nudging in logarithmic scale using $x_0=[0.04, 0.30, 0.69, 0.28, 0.83]^T, \ z_0=[0.62, 0.89, 1.20, 1.15, 1.57]^T$ with  $\mu=1$ and $\mu=30 $  : Oscillator model where $x_1$ and $x_3$ are observed.}
     \label{osci_rel_err_obs1}
\end{figure}

\begin{figure}[htbp]
     \centering
    \begin{subfigure}[b]{0.35\textwidth}
     \includegraphics[width=\textwidth]{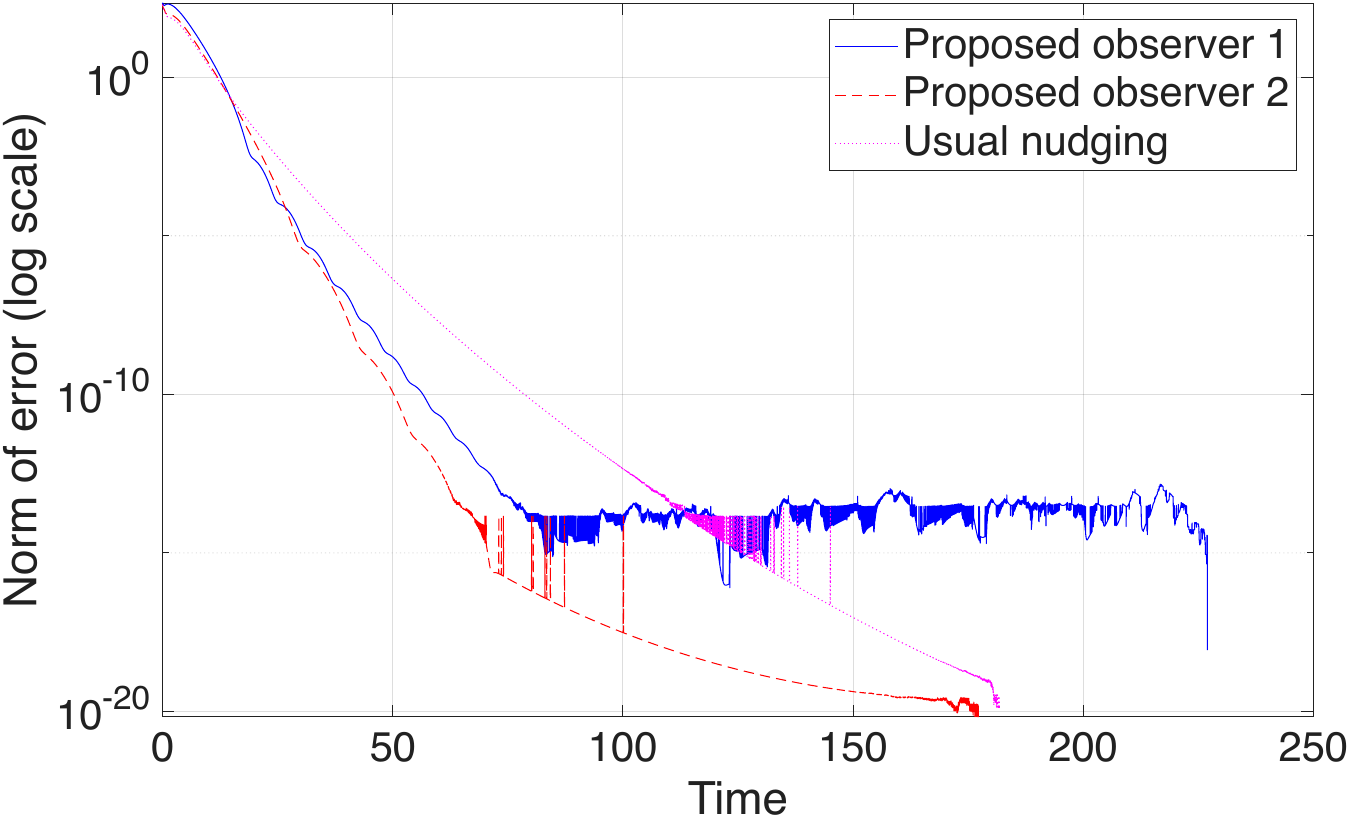}
     \caption{ $\mu=1 $  }
     \label{osci_err_large_mu1}
     \end{subfigure}
     \hspace{2cm}
     \begin{subfigure}[b]{0.35\textwidth}
     \includegraphics[width=\textwidth]{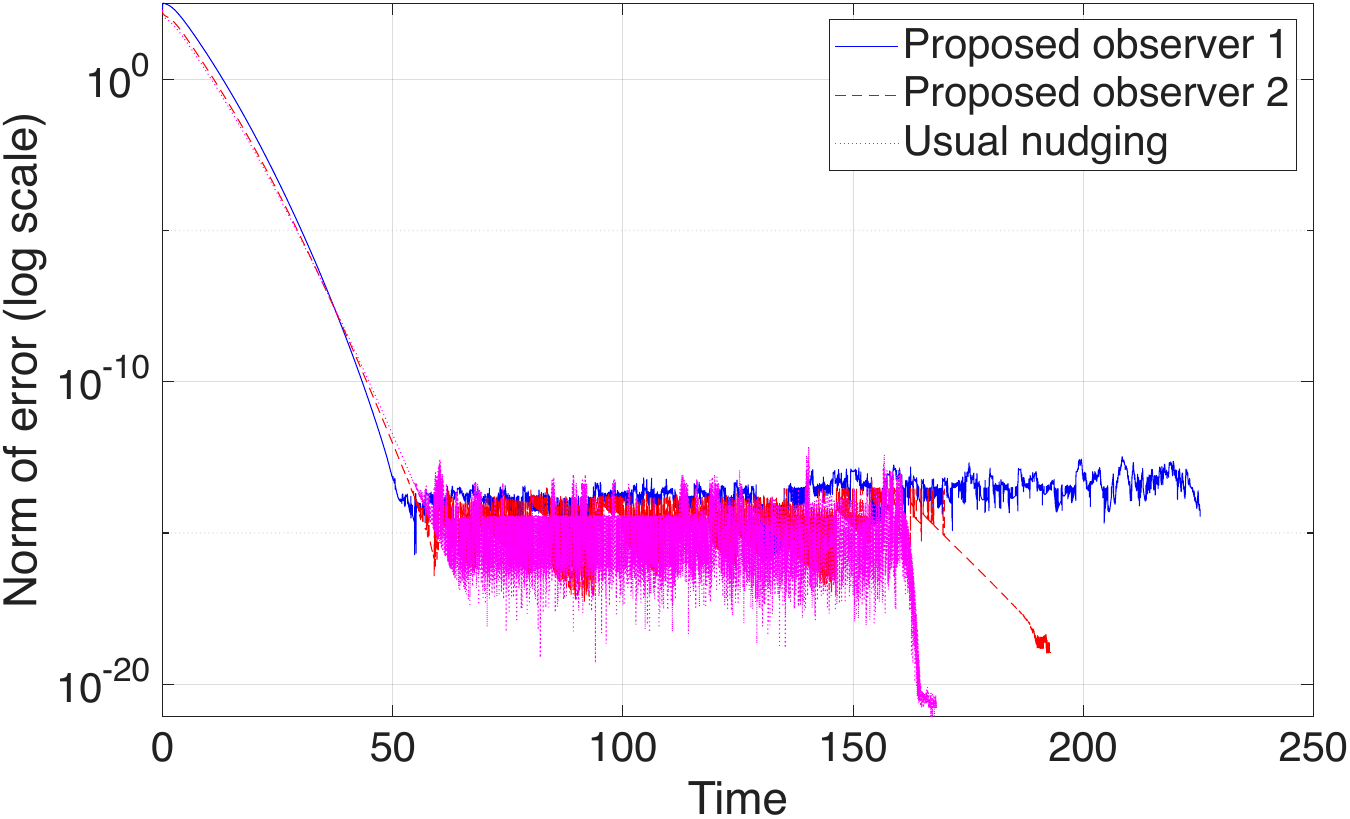}
     \caption{ $\mu=30 $  }
     \label{osci_err_large_mu30}
     \end{subfigure}
     \caption{ Comparison between norm of error with proposed observers and usual nudging in logarithmic scale using $x_0=[1.04,35.30, 7.69, 12.28, 10.83]^T, \ z_0=[100.62, 10.89, 15.20, 100.15, 150.57]^T$ with $\mu=1 $ and $\mu=30$  : Oscillator model where $x_1$ and $x_3$ are observed.}
     \label{osci_err_large}
\end{figure}

\begin{figure}[htbp]
     \centering
     \begin{subfigure}[b]{0.35\textwidth}
     \includegraphics[width=\textwidth]{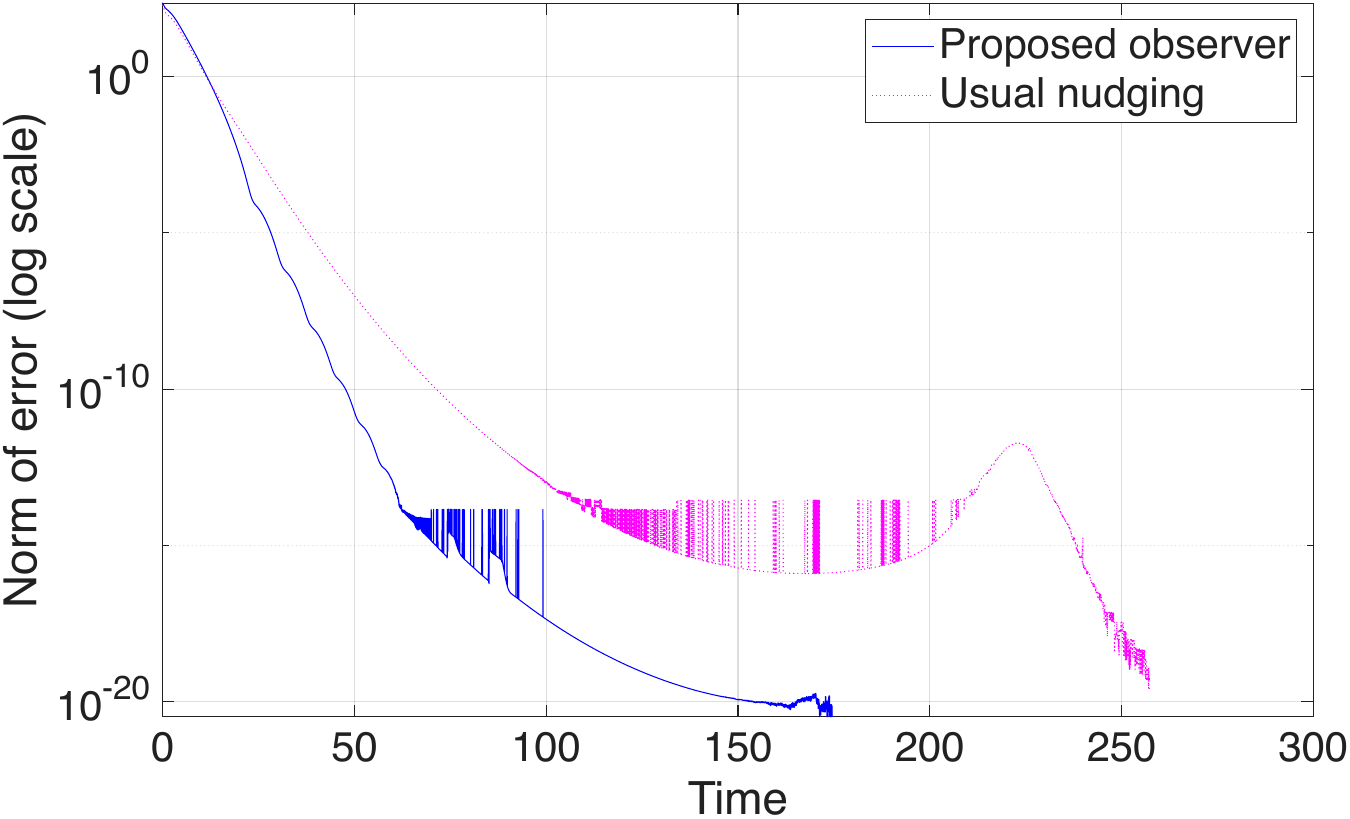}
     \caption{ $\mu=1 $  }
     \label{osci_err(2nd obs)_mu1}
     \end{subfigure}
     \hspace{2cm}
    \begin{subfigure}[b]{0.35\textwidth}
     \includegraphics[width=\textwidth]{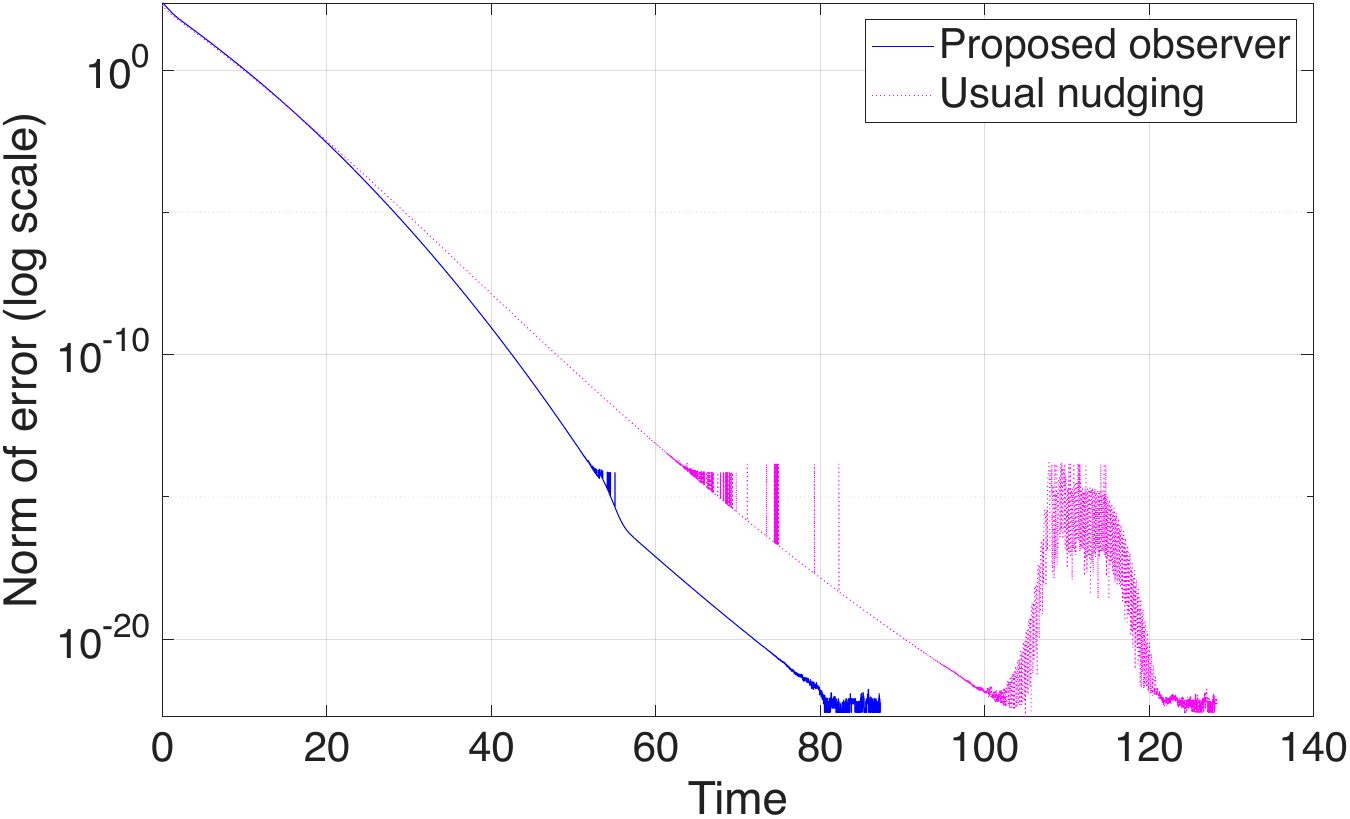}
     \caption{ $\mu=10 $  }
     \label{osci_err(2nd_obs)_mu30}
     \end{subfigure}
     \caption{ Comparison between norm of error with proposed observer and usual nudging in logarithmic scale using $x_0=[1.04,35.30, 7.69, 12.28, 10.83]^T, \ z_0=[100.62, 10.89, 15.20, 100.15, 150.57]^T$ with $\mu=1$ and $\mu=10 $  : Oscillator model where $x_1$ and $x_2$ are observed.}
     \label{osci_err_2nd obs}
\end{figure}

Figures \ref{os_each species_long time span}, \ref{os_each species_short time span}, \ref{osci_err}, \ref{osci_rel_err_obs1} and \ref{osci_err_large} illustrate the efficiency of our proposed observers 1 and 2 for the case where we observe $x_1$ and $x_3$.  
Figures \ref{os_each species_long time span} and \ref{os_each species_short time span} show the trajectories of the proposed observer 1 against the system states. Figure \ref{os_each species_short time span} illustrates the initial behavior of these trajectories. We can see that the observer state variables converge to the system state variables  within the time interval $t<15$. Figure \ref{os_each species_long time span} displays the behavior over a longer interval $t=0$ to $t=1000$, showing that our proposed observer tracks the oscillating system dynamics as the orange dotted line overlaps the blue solid line. In Figures \ref{osci_err} and  \ref{osci_err_large}, we show the comparison of the error norm on a logarithmic scale between the two proposed observers and the usual nudging method with 
$\mu=1$ and $\mu=30$. In Figure \ref{osci_err} we consider small initial error while in Figure \ref{osci_err_large} we consider large initial error. In Figure \ref{osci_err}, we see that while all observers converge, when $\mu=1$, proposed observer 2 (dashed orange line) is the fastest and the usual nudging observer (dotted magenta) is the slowest. For larger value of $\mu=30$, the differences in convergence rates are negligible. 
Since the oscillator trajectories display a wide dynamic range as seen in 
Figure \ref{os_each species_long time span}, it is important to consider a measure of relative error of the observers. We use the following formula for relative error $r(t)$:
\[
r(t) = \frac{|e(t)|}{\epsilon + |x(t)|},
\]
with $\epsilon=10^{-16}$. 
Figure \ref{osci_rel_err_obs1} shows the comparison between the relative errors of proposed observers and the usual nudging technique. Here also we can see that our all observers attain convergence of the relative error and the proposed observers performing better for the case of $\mu=1$. 

Figure \ref{osci_err_2nd obs}
shows the comparison of the error norm on logarithmic scale between the proposed observer and the usual nudging method when $x_1$ and $x_2$ are observed using $\mu=1$ and $\mu=10$. In this figure, proposed observer (blue solid line) exponentially decays more rapidly than usual nudging method (dotted magenta line).

\subsection{Simulation for Example $4$}
For this example we use parameter values
$   
    k_1 = 0.1,
    k_2 = 0.2,
    k_3 = 0.1,
    k_4 = 2,
    k_5 = 1,
    k_6 = 2
$. For this example we have two proposed observers. Our proposed observer 1  is the same as usual nudging.

\begin{figure}[htbp]
     \centering
     \begin{subfigure}[b]{0.35\textwidth}
     \includegraphics[width=\textwidth]{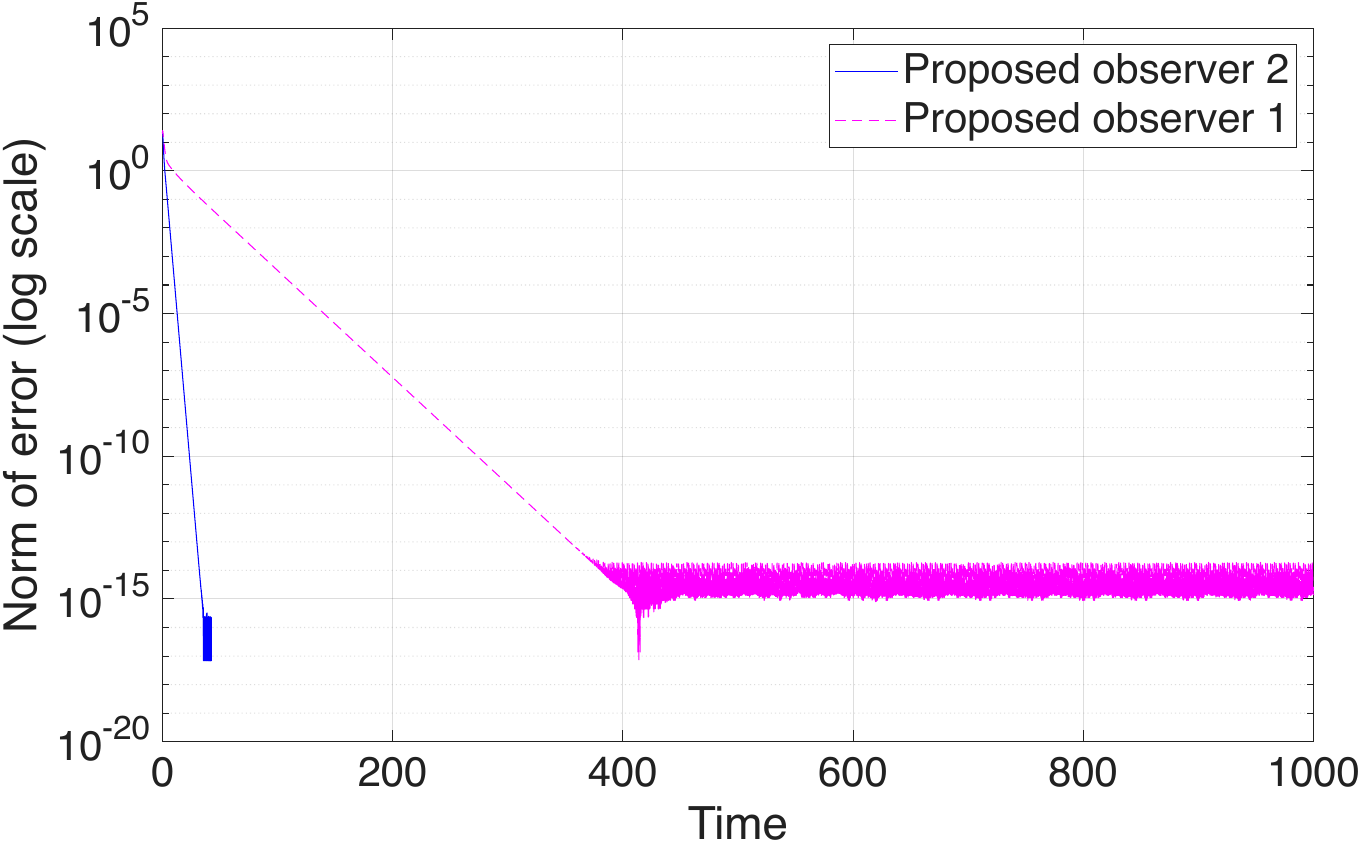}
     \caption{ $\mu=1 $  }
     \label{ex4_err_mu1}
     \end{subfigure}
     \hspace{2cm}
     \begin{subfigure}[b]{0.35\textwidth}
         \includegraphics[width=\textwidth]{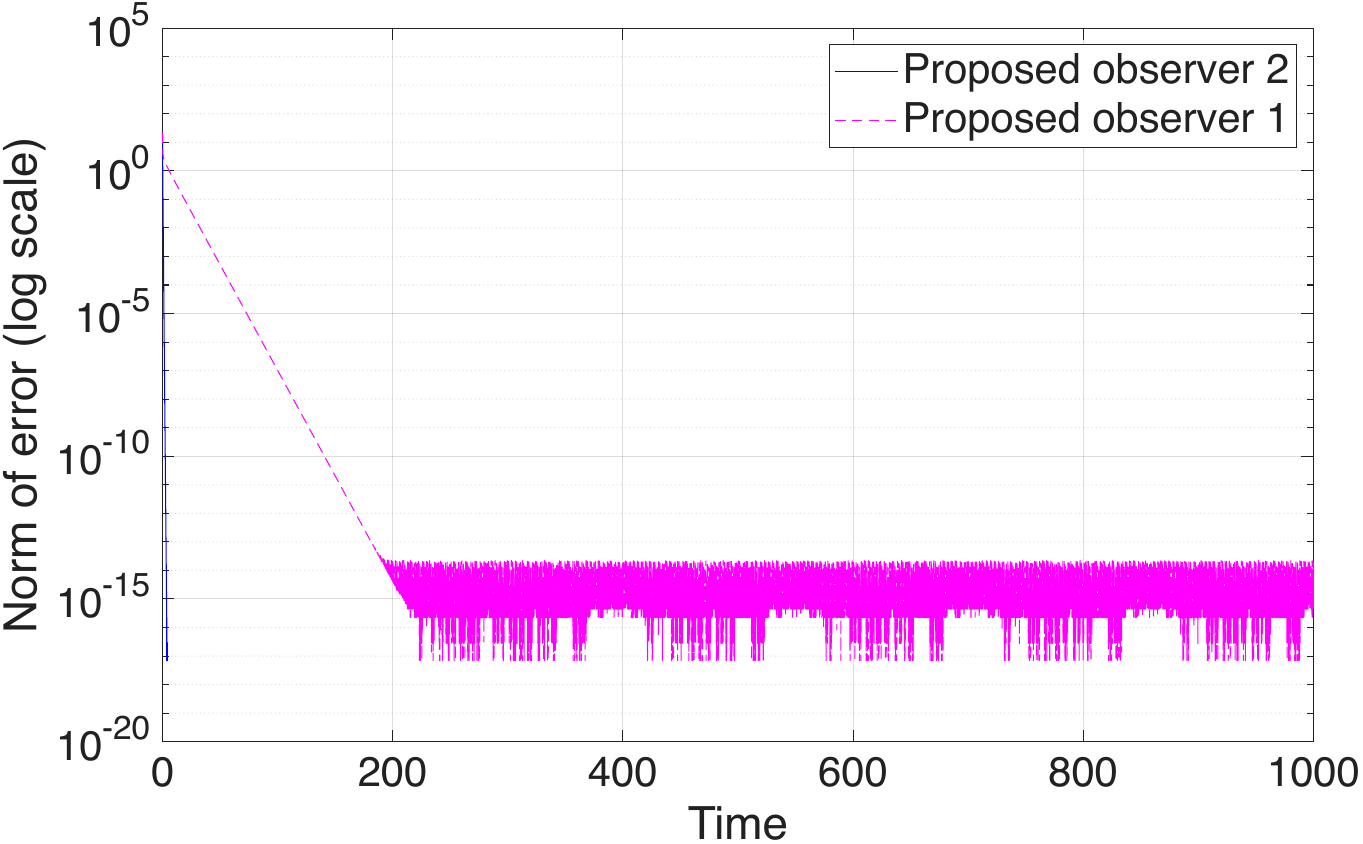}
         \caption{ $\mu=10$}
         \label{ex4_err_mu10}
     \end{subfigure}
     \caption{Comparison between norm of error with proposed observer 1 and observer 2 using $x_0=[0.9,0.5,0.7]^T, \ z_0 = [9,8,20]^T$ with $\mu=1$ and $\mu=10$ : Model in Example $4$.}
     \label{ex4_err}
\end{figure}
\begin{figure}[htbp]
     \centering
     \includegraphics[width=0.4\linewidth]{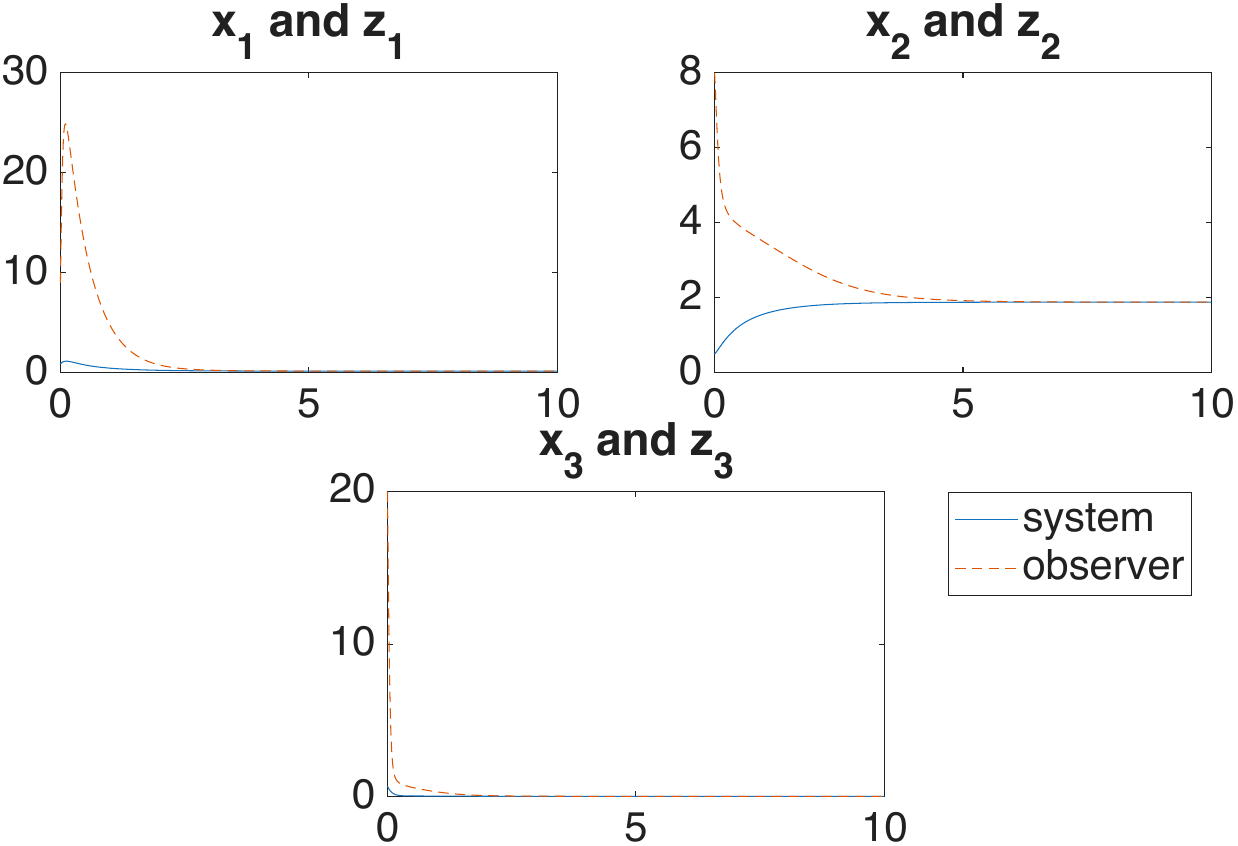}
     \caption{Comparison between each component of system($x$)and observer($z$) with $x_0=[0.9,0.5,0.7]^T, \ z_0 = [9,8,20]^T$ and $\mu=1$ : Model in Example $4$ with proposed observer 1.}
     \label{ex4_each species}
\end{figure}

\begin{figure}[htbp]
     \centering
     \includegraphics[width=0.4\linewidth]{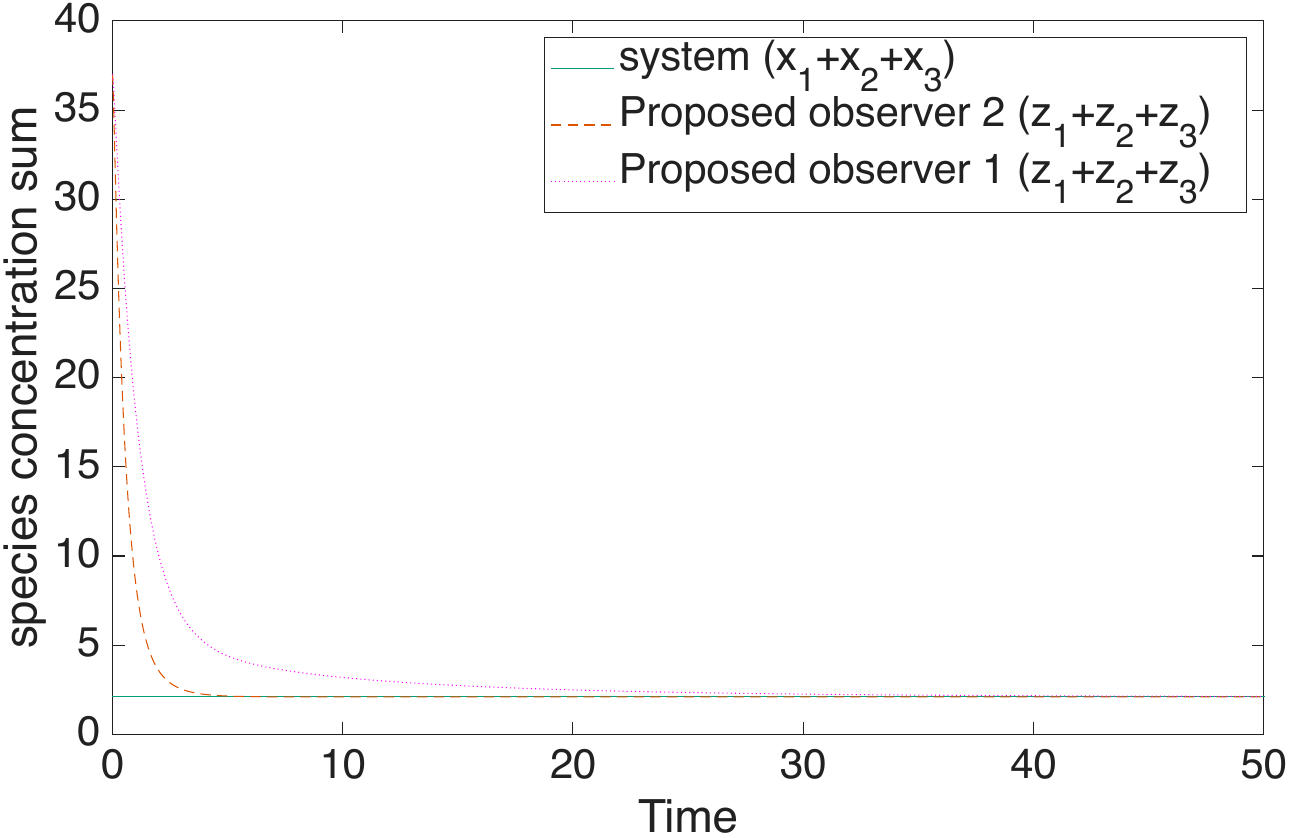}
     \caption{ Comparison between sum of species concentration for system and observers using proposed observer 1 and observer 2 with $x_0=[0.9,0.5,0.7]^T, \ z_0 = [9,8,20]^T$ and $\mu=1$ : Model in Example $4$.}
     \label{ex4_con sum}
\end{figure}

Figure \ref{ex4_err} illustrates the convergence of the error norm for the model described in Example 4 with the comparison between the two proposed observers for parameter values $\mu=1$ and $\mu=10$. This system is initialized with large error using system state $x_0=[0.9,0.5,0.7]^T$ and observer state $z_0=[9,8,20]^T$. The proposed observer 2 converges faster than the proposed observer 1 (same as usual nudging). In Figure \ref{ex4_each species} we show the individual state trajectories of the system and observer using $\mu=1$. Observer states overlap with the system states within the time span $t<8$. Figure \ref{ex4_con sum} compares the sum of species concentrations. Here we can see that the proposed observer 2 (dashed orange line) converges faster than the proposed observer 1 (dotted pink line) with the constant sum of species concentrations shown in solid green line.

\subsection{Noisy observation of the Willamowski-Rössler (WR) model}\label{sec-noisy}

For a dynamical system $\dot{x}=f(x)$, we consider the case of noisy observation $\eta(t)$ of the following form:
\begin{equation}
\eta(t) = \int_0^t C \, x(s) \, ds + \Sigma \, B(t),    
\end{equation}
where $B$ is a $m$ dimensional standard Brownian motion and $\Sigma$ is a symmetric positive definite matrix. In this case, we use the following observer:
\begin{equation}
z(t) = z(0) + \int_0^t f(z(s)) \, ds + \int_0^t G \, (d\eta(s) - C\, z(s) \, ds),      
\end{equation}
where $G$ is the same gain matrix proposed by our method for the case of noiseless observations. When $\Sigma=0$, $\eta(t)$ is differentiable and $\dot{\eta}(t)=y(t)=C \, x(t)$, the deterministic observation. And the above observer reduces to 
the observer considered earlier:
\[
\dot{z}(t) = f(z(t)) + G \, (y(t) - C \, z(t)).
\]

For the WR model where $x_1$ and $x_3$ are observed with noise, we take $\Sigma$ to be a scalar. We implement the same observer $G$ proposed for the case of noiseless 
observation. We also implemented a {\em particle filter} for the purpose of estimating the state. We compare the performance of the proposed observer with that of the particle filter. We refer the reader to \cite{bain2008fundamentals, doucet2001sequential} and references therein for details on particle filtering. 

We used the same parameter values
$   
    k_1 = 30,
    k_2 = 0.5,
    k_3 = 1,
    k_4 = 0.001,
    k_5 = 10,
    k_6 = 0.001,
    k_7 = 1,
    k_8 = 0.001,
    k_9 = 16.5 ,
    k_{10} = 0.5
$.

For the particle filter we used the Euler method with time step $10^{-3}$ to evolve the filter.
The particle weights were normalized after each Euler step. 
We also used the Euler method with same time step to generate the noisy observation and also to evolve the observer. The particle filter and the observer were both applied to the same observation trajectory. 
We took the system initial condition $x_0 = $ and observer initial condition $z_0=(9,8,11)$. 
We initialized the particle filter uniformly inside $[0,18] \times [0, 16] \times [0,22]$. 

Particles were resampled using multinomial resampling \cite{doucet2001sequential} whenever the effective 
sample size fell below $0.65 \, N_p$ where $N_p$ was the number of particles. 
Effective sample size $ESS$ is measured by 
\[
ESS = \frac{\left(\sum_{i=1}^{N_p} W_i\right)^2}{\sum_{i=1}^{N_p} W_i^2}.
\]
The state at time $t$ is estimated by the particle filter as follows:
\[
z_{PF}(t) = \frac{\sum_{i=1}^{N_p} z^{(i)}(t) \, W_i(t)}{\sum_{i=1}^{N_p} },
\]
where $z^{(i)}(t)$ is the state of the $i$th particle and $W_i(t)$ is its weight (at time $t$). 

 In order to avoid particle degeneracy we evolved the particles with 
 artificially added small noise $\sigma$, resulting in the Euler step  
\[
z^{(i)}(t_{k+1}) = z^{(i)}(t_k) + h \, f(z^{(i)}(t_k)) + \sigma \sqrt{h} \, \xi^{(i)}_k    
\]
where $h=t_{k+1}-t_k$ is the step size and $\xi^{(i)}_k$ are iid Gaussians with zero mean and unit variance. We chose $\sigma$ by trial and error to get the best performance. 

We found that the particle filter performed 
best with artificial noise $\sigma=0.1$ and required $N_p=10^4$ particles.
The proposed observer performed poorly compared to the particle filter when $\Sigma =1$ (large observation noise) as seen in Figures
 \ref{wr_PF_err_1} and \ref{wr_PF_each sp_1}.
When $\Sigma = 0.013$ (small noise), the performance of our proposed observer is better than the particle filter with $10^4$ particles and $\sigma = 0.1$ in Figure \ref{wr_PF_err_0.013} and \ref{wr_PF_each spe_0.013}.

It must be noted that the particle filter required $N_p=10^4$ particles for reliable estimates. Hence the computational burden of running a particle filter is significantly higher than implementing our observer.

\begin{figure}[htbp]
     \centering
     \begin{subfigure}[b]{0.35\textwidth}
     \includegraphics[width=\textwidth]{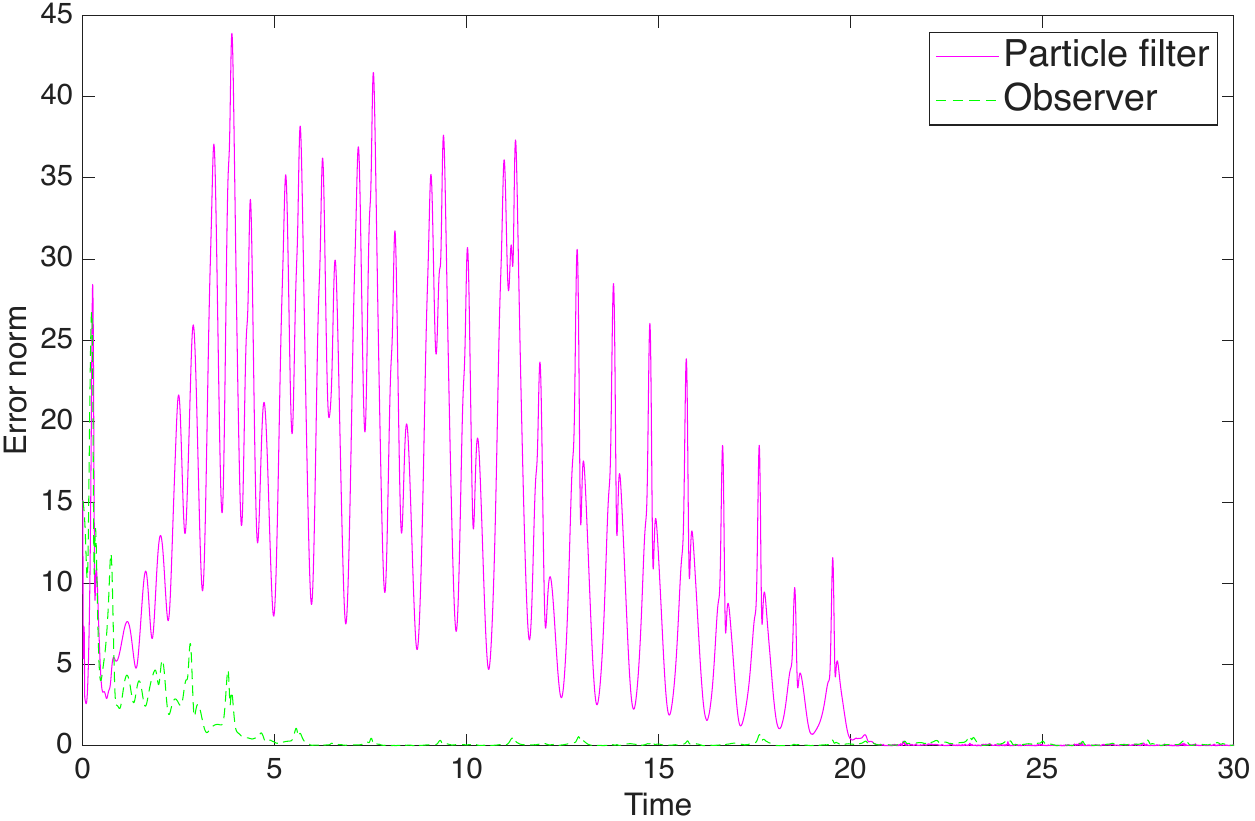}
     \caption{ $\Sigma = 0.013 $, $\mu = 3 $  }
     \label{wr_PF_err_0.013}
     \end{subfigure}
     \hspace{2cm}
     \begin{subfigure}[b]{0.35\textwidth}
         \includegraphics[width=\textwidth]{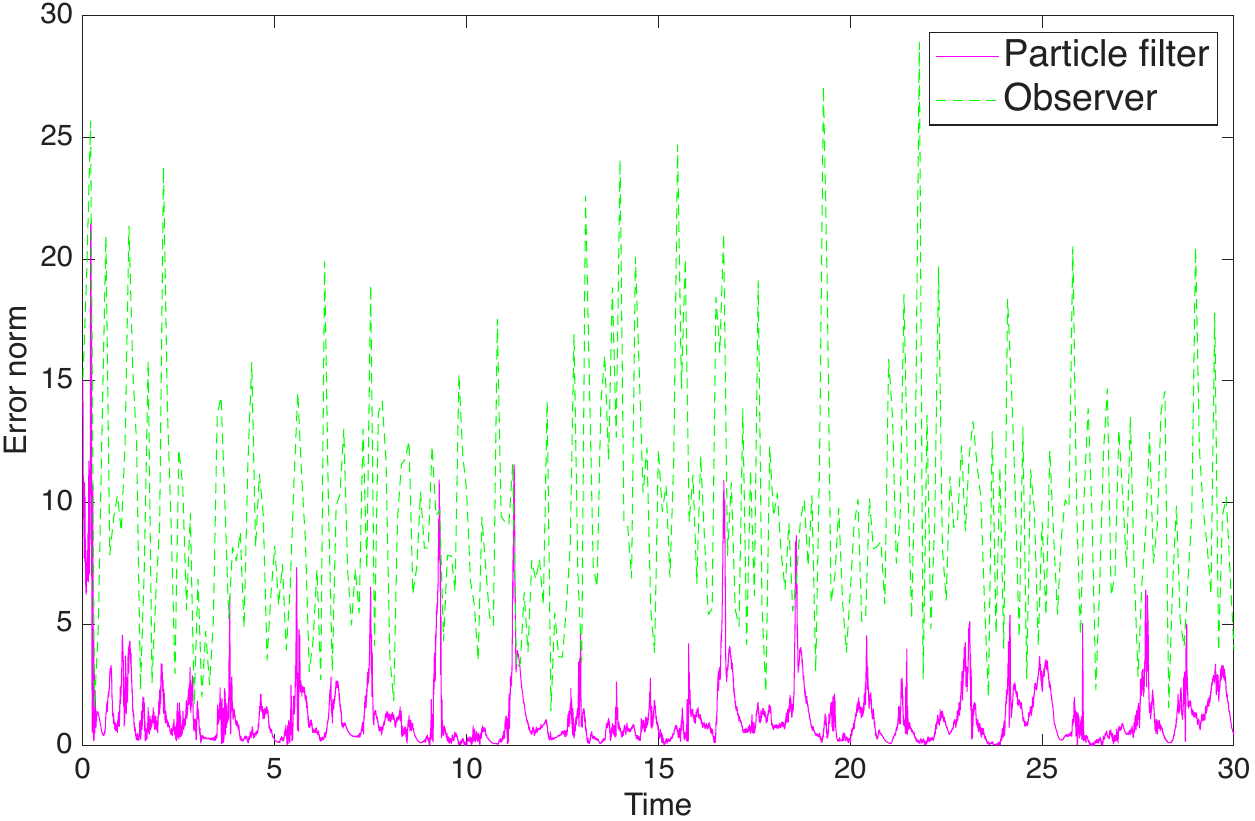}
         \caption{ $\Sigma = 1 $, $\mu = 40$}
         \label{wr_PF_err_1}
     \end{subfigure}
     \caption{Comparison between norm of error with proposed observer and particle filter using $x_0=[0.9,0.5,0.7]^T, \ z_0 = [9,8,11]^T$ with $\Sigma= 0.013, \mu= 3$ and $\Sigma=1, \mu=40$ : WR model.}
     \label{wrn_err}
\end{figure}

\begin{figure}[htbp]

     \centering
     \begin{subfigure}[b]{0.35\textwidth}
     \includegraphics[width=\textwidth]{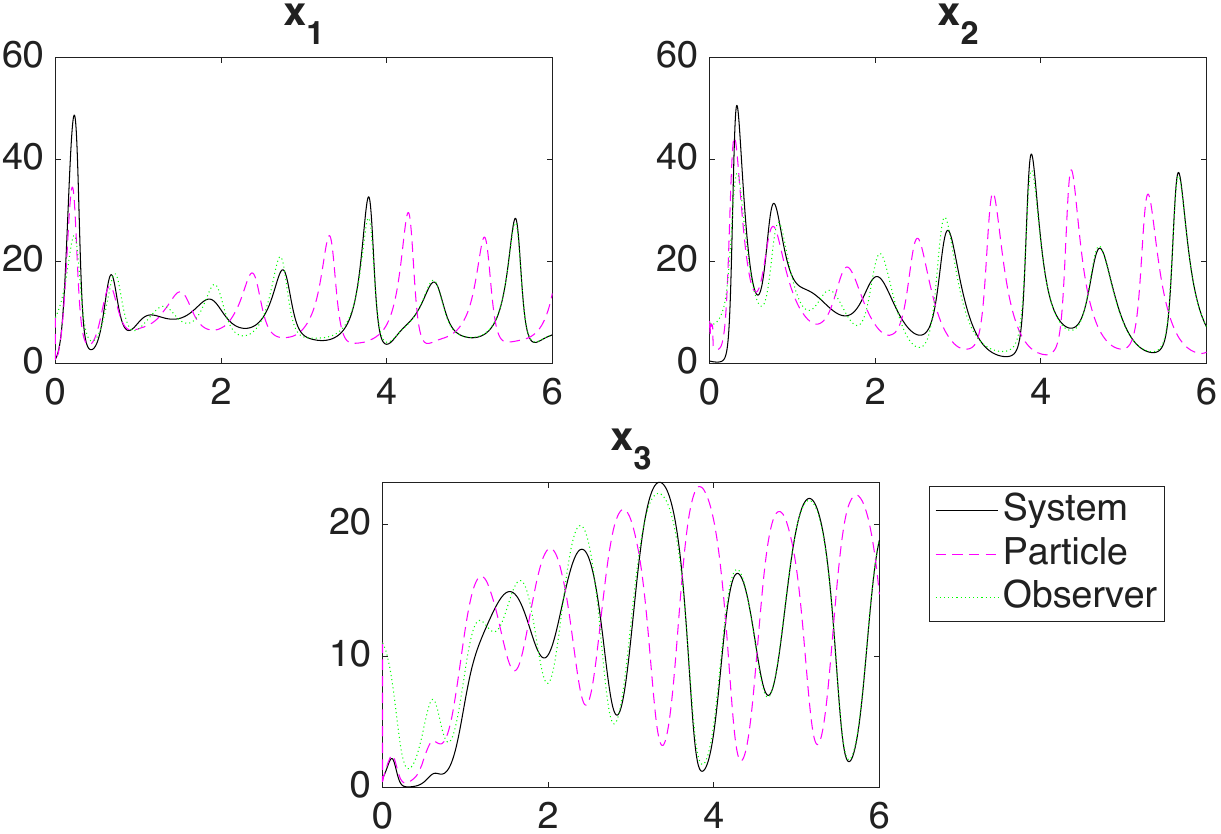}
     \caption{ $\Sigma=0.013, \mu=3 $  }
     \label{wr_PF_each spe_0.013}
     \end{subfigure}
     \hspace{2cm}
     \begin{subfigure}[b]{0.35\textwidth}
         \includegraphics[width=\textwidth]{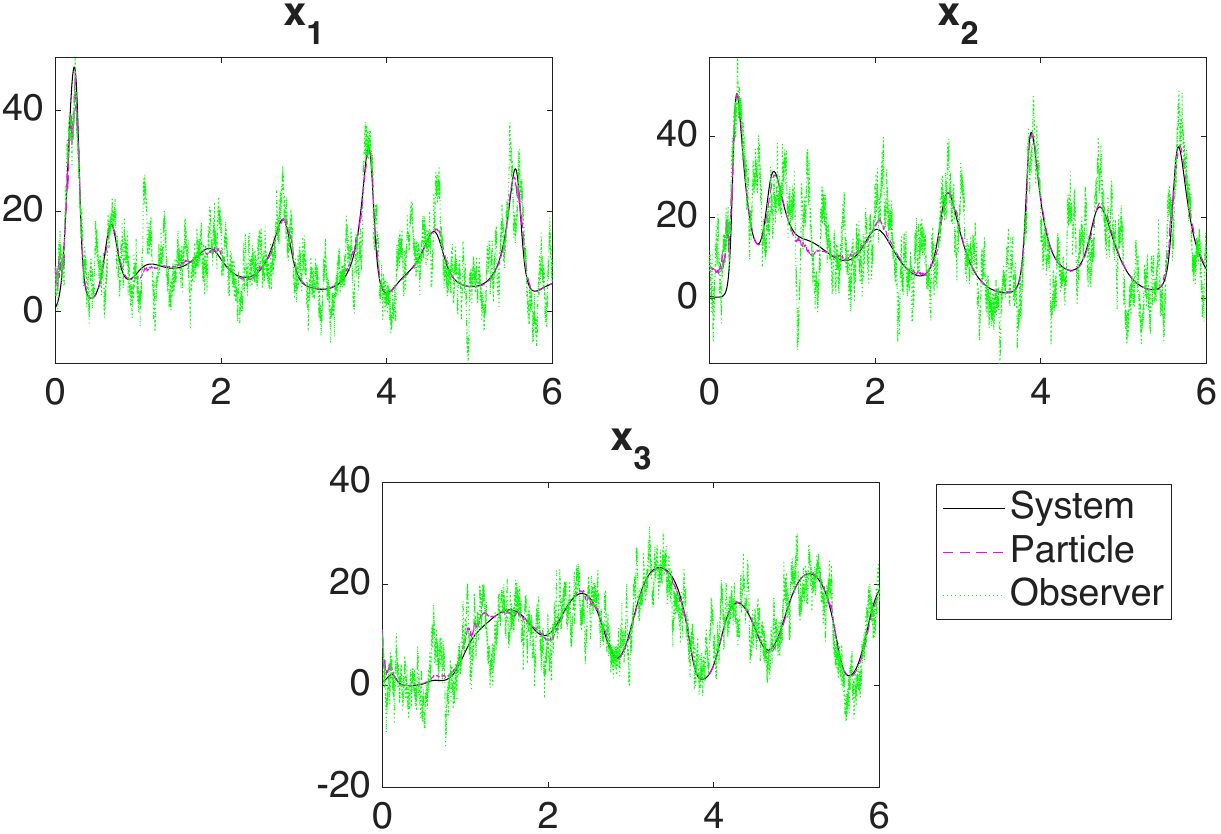}
         \caption{ $\Sigma=1, \mu=40$}
         \label{wr_PF_each sp_1}
     \end{subfigure}
     \caption{Comparison between the species concentration with proposed observer and particle filter using $x_0=[0.9,0.5,0.7]^T, \ z_0 = [9,8,11]^T$ with $\Sigma=0.013, \mu=3$ and $\Sigma=1, \mu= 40$ : WR model.}
     \label{wrn_each sp}
     
\end{figure}

\section{Conclusions}
\label{sec-conclusion}

We presented an approach to data assimilation via a tunable observer and 
provided sufficient conditions (Theorem \ref{genthm}) under which the observer error converged to zero exponentially. 
This result assures that regardless of the initial error one may tune the observer 
to achieve exponential error convergence. We provided two additional results, Propositions \ref{prop1} and \ref{prop2}, which were concerned with chemical reaction network (CRN) models and provided results that imply the contractivity conditions of Theorem \ref{genthm}. We illustrated the application of these propositions to mass action form of CRNs where the concentrations of a subset of species is observed. 

Numerical simulations were provided which confirmed the theory. The numerical results also compared the performance of what refer to as the usual nudging method. In several instances our proposed observer performed better than the usual nudging. 
More notably, in the Lotka-Volterra example, the performance of the usual nudging observer was counterintuitive in that it performed better for a smaller nudging parameter value and poorly for a larger parameter value. 
Even though the usual nudging observer appeared to work in most situations, our proposed observer (under certain conditions) can be tuned to guarantee exponential error convergence regardless of initial error while such a guarantee for the usual nudging method is lacking. 

We also presented numerical results for 
the case of observation noise in the chaotic CRN model. We compared the performance of 
our proposed observer with that of a particle filter. Numerical evidence suggests that the proposed observer performs well when the noise is small, a situation where the particle filter has difficulties. Moreover, the observer is much faster to implement numerically than the particle filter which required $10^4$ particles. However, we do not have any theoretical analysis as yet to justify the accuracy of our proposed observer for the case of noisy observations. This is the subject of future work. 

\appendix
\section{Solvability of the Lie algebra for Example 3 (Oscillator)} 
Given two matrices $X, Y \in \real^{n \times n}$ their Lie bracket $[X,Y] \in \real^{n \times n}$ is defined by $[X,Y]=X Y - Y X$. A Lie (sub) algebra $g \subset \real^{n \times n}$ is a vector subspace of $\real^{n \times n}$ that is closed under Lie brackets. The Lie algebra generated by a set of matrices is the intersection of all Lie (sub) algebras that contain that set, and is itself a Lie algebra. 

Given a Lie algebra $g$ of matrices the derived series $g^{(k)}$ for $k=0,1,2 \dots$ are defined by $g^{(0)}=g$ and 
\[
g^{(k)} = \text{span}\{ [X,Y] \, | X, Y \in g^{(k-1)}\} \quad k \geq 1.
\]
Note that $g^{(k)}$ is a Lie sub algebra of $g^{(k-1)}$. 
A Lie algebra $g$ is said to be {\em solvable} if $g^{(k)}=\{0\}$ for some $k \geq 0$.

\begin{lemma}\label{lie_lem}
    The Lie algebra $\{A, B\}_{LA}$ generated by $A, B \in \real^{3 \times 3}$ is solvable if $A$ and $B$ are of the following form:
            \[A= \begin{pmatrix} a_{11} & a_{12} & 0 \\ 0 & a_{22} & 0 \\ a_{31} & a_{32} & a_{33} \end{pmatrix},\ \ B= \begin{pmatrix} b_{11} & 0 & 0 \\ 0 & b_{22} & 0 \\ 0 & b_{32} & b_{33} \end{pmatrix}.\]
\end{lemma}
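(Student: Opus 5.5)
The plan is to show that a single change of basis, namely a permutation of the coordinates, makes both $A$ and $B$ lower triangular. The lemma then follows from the standard fact, already used for Example 4 and Observation model 2 via \cite{humphreys1978}, that lower triangular matrices form a solvable Lie algebra.

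The key observation comes from the zero pattern of $A$: $a_{13}=a_{21}=a_{23}=0$. This says that $\text{span}\{e_2\}$ and $\text{span}\{e_1,e_2\}$ are invariant under $A$. The nonzero entries of $B$ are only $b_{11},b_{22},b_{33},b_{32}$, so these two subspaces are also invariant under $B$. Both matrices therefore preserve the flag $\text{span}\{e_2\}\subset\text{span}\{e_1,e_2\}\subset\real^3$.

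Concretely, I reorder the basis as $(e_2,e_1,e_3)$ using the permutation matrix $P$ with columns $e_2,e_1,e_3$. Then
\[
P^TAP=\begin{pmatrix} a_{22} & 0 & 0\\ a_{12} & a_{11} & 0\\ a_{32} & a_{31} & a_{33}\end{pmatrix},\qquad
P^TBP=\begin{pmatrix} b_{22} & 0 & 0\\ 0 & b_{11} & 0\\ b_{32} & 0 & b_{33}\end{pmatrix}.
\]
Both are lower triangular; I will verify this entry by entry.

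Next, the map $X\mapsto P^TXP=P^{-1}XP$ is an associative algebra automorphism of $\real^{3\times 3}$. It therefore preserves Lie brackets, so it maps $\{A,B\}_{LA}$ isomorphically onto $\{P^TAP,P^TBP\}_{LA}$, and it maps derived series onto derived series. Solvability is thus invariant under this map, and it suffices to treat the conjugated pair.

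Let $\mathfrak{t}$ denote the lower triangular $3\times 3$ matrices. Then $\mathfrak{t}$ is closed under brackets, so it contains $\{P^TAP,P^TBP\}_{LA}$. The commutator of two lower triangular matrices has zero diagonal, because diagonals of products multiply entrywise and hence commute. So $\mathfrak{t}^{(1)}$ lies in the strictly lower triangular matrices. For strictly lower triangular $3\times3$ matrices, any product is supported only in position $(3,1)$. Hence $\mathfrak{t}^{(2)}\subset\text{span}\{E_{31}\}$, and $\mathfrak{t}^{(3)}=\{0\}$ since $E_{31}$ commutes with itself. Because the derived series of a subalgebra is contained termwise in that of $\mathfrak{t}$, the generated Lie algebra is solvable.

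There is no real obstacle here. The only step needing care is finding the right permutation, which the zero pattern of $A$ dictates, and then confirming that $B$'s single off-diagonal entry $b_{32}$ lands below the diagonal under that permutation.
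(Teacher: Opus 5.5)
Your proof is correct, but it takes a different route from the paper. The paper stays in the original coordinates. It shows by induction on bracket length that every bracket of length at least two in $A,B$ lies in $S=\text{span}\{E_{12},E_{31},E_{32}\}$, where the $E_{ij}$ are matrix units. It then checks by hand that $[S,S]\subset\text{span}\{E_{32}\}$ and concludes $g^{(3)}=\{0\}$.

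Your swap of $e_1$ and $e_2$ carries $S$ exactly onto the strictly lower triangular matrices, and carries $E_{32}$ onto $E_{31}$. So the two derived-series computations are the same one, seen in different bases. Your version explains where $S$ comes from. It also replaces the case analysis on bracket lengths, including the step that $g^{(1)}$ is spanned by brackets of length at least two, with the standard fact that simultaneously triangularizable matrices generate a solvable Lie algebra. Finally, it shows that Example 3 could have been handled by the same lower-triangular citation the paper already uses for Observation model 2 and Example 4. The paper's version, in exchange, is fully explicit and needs no change of basis.

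One sentence of your motivation is wrong, although nothing later depends on it. The subspaces $\text{span}\{e_2\}$ and $\text{span}\{e_1,e_2\}$ are not invariant under $A$ or $B$, since $Ae_2=(a_{12},a_{22},a_{32})^T$ and $Be_2=(0,b_{22},b_{32})^T$. The zero pattern $a_{13}=a_{21}=a_{23}=0$ constrains rows, so it gives invariance of that flag under $A^T$ and $B^T$.

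The flag that $A$ and $B$ themselves preserve is $\text{span}\{e_3\}\subset\text{span}\{e_1,e_3\}$. This is exactly what lower triangularity in the ordered basis $(e_2,e_1,e_3)$ expresses: the span of the last basis vector, and the span of the last two, are invariant. Your displayed conjugated matrices are correct, and the rest of the argument uses only them, so you only need to fix that sentence.
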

\begin{proof}

Let $g$ be the Lie algebra generated by $\{A, B\}$.  

We first show that $g^{(1)} \subseteq S$ where
$S = \left\{\begin{pmatrix} 0 & x & 0 \\ 0 & 0 & 0 \\ y & z & 0 \end{pmatrix} : x,y,z \in \mathbb{R}\right\}$. 

To that end, let $C_n$ be the set of all Lie brackets of $A$ and $B$ of length $n \ge 1$. 
Thus $C_1=\{A, B\}$, $C_2=\{[A,B],0\}$ (note that $[A,A]=[B,B]=0$) and so on.  
We note that $g^{(1)}$ is spanned by $\bigcup_{n \geq 2} C_n$. 

Next, we show via induction that  $C_n \subset S$ for $n \ge 2$. 
For $n=2$,
\[
[A, B] = \begin{pmatrix} 0 & a_{12}(b_{22}-b_{11}) & 0 \\ 0 & 0 & 0 \\ a_{31}(b_{11}-b_{33}) & a_{32}(b_{22}-b_{33}) + b_{32}(a_{33}-a_{22}) & 0 \end{pmatrix} 
\in S. 
\]
Suppose $C_j \subset S$ for all brackets of length $j \leq k$.
Let $J \in C_{k+1}$ be nonzero. We consider cases.

\em{Case 1:} $J=[M, N]$ where 
length of $M$ is 1 (and hence $M=A$ or $M=B$) and length of $N$ is $k$ or vice versa. 
By hypothesis $N \in S$. So, we can write it as
$ N = \begin{pmatrix} 0 & x_1 & 0 \\ 0 & 0 & 0 \\ y_1 & z_1 & 0 \end{pmatrix}$.\\
Now,
\[
[A, N] = \begin{pmatrix} 0 & x_1(a_{11}-a_{22}) & 0 \\ 0 & 0 & 0 \\ y_1(a_{33}-a_{11}) - z_1a_{31} & a_{31}x_1 + z_1(a_{33}-a_{22}) - y_1a_{12} & 0 \end{pmatrix} \\
\in S
\]
and
\[
[B, N] = \begin{pmatrix} 0 & x_1(b_{11}-b_{22}) & 0 \\ 0 & 0 & 0 \\ y_1(b_{33}-b_{11}) & z_1(b_{33}-b_{22}) & 0 \end{pmatrix} 
\in S.
\]
In the alternative, $N=A$ or $N=B$ and $M \in S$, and it is clear that $[M,A], [M,B] \in S$. 

\em{Case 2:} $J=[M,N]$, where lengths of $M$ and $N$ are $\ge 2$.
Thus $M, N \in S$. Then
\begin{align*}
[M, N] &= \left[ \begin{pmatrix} 0 & x_2 & 0 \\ 0 & 0 & 0 \\ y_2 & z_2 & 0 \end{pmatrix}, \begin{pmatrix} 0 & x_1 & 0 \\ 0 & 0 & 0 \\ y_1 & z_1 & 0 \end{pmatrix} \right] \\
&= \begin{pmatrix} 0 & 0 & 0 \\ 0 & 0 & 0 \\ 0 & y_2x_1 - y_1x_2 & 0 \end{pmatrix} \\
&\in S.
\end{align*}
This proves that $C_n \subset S$ for $n \geq 2$. 
Consequently $g^{(1)} \subset S$. The above calculations also show that $S$ is a Lie sub algebra. Now, $g^{(2)} \subset S^{(1)}$.
So, we have
\[
g^{(2)} \subset \left\{\begin{pmatrix} 0 & 0 & 0 \\ 0 & 0 & 0 \\ 0 & x & 0 \end{pmatrix} : x \in \mathbb{R}\right\}, 
\]
and hence
\[
g^{(3)} = \{0\}.
\]
Therefore, $g$ is a solvable Lie algebra.
\end{proof}

\section*{Acknowledgments}
The work of AB was supported in part by NSF grant DMS-2529382.

\bibliographystyle{siamplain}
\bibliography{references}

\end{document}